\theoremstyle{plain}
\newtheorem{lemma}{Lemma}[section]
\newtheorem{corollary}[lemma]{Corollary}
\newtheorem{proposition}[lemma]{Proposition}
\newtheorem{theorem}[lemma]{Theorem}
\newtheorem*{thmm}{Theorem}
\newtheorem*{sublem}{Sublemma}
\theoremstyle{definition}
\newtheorem{exs}[lemma]{Examples}
\newtheorem{remark}[lemma]{Remark}
\newtheorem{defs}[lemma]{Definition}
  \DeclareMathOperator{\Hom}{Hom}
\DeclareMathOperator{\End}{End} 
\DeclareMathOperator{\id}{id} 
 \DeclareMathOperator{\Gr}{Gr}
\DeclareMathOperator{\Ab}{Ab} \DeclareMathOperator{\fg}{fg}
\DeclareMathOperator{\fp}{fp} \DeclareMathOperator{\Qcoh}{Qcoh}
 \DeclareMathOperator{\kr}{Ker}
\DeclareMathOperator{\Ext}{Ext} \DeclareMathOperator{\im}{Im}
\DeclareMathOperator{\coker}{Coker} \DeclareMathOperator{\QGr}{QGr}
 \DeclareMathOperator{\Tors}{Tors}
\DeclareMathOperator{\Proj}{\sf Proj}
\DeclareMathOperator{\Mod}{Mod} 
 \DeclareMathOperator{\colim}{colim}
 \DeclareMathOperator{\Map}{Map}
\newcommand{\uhom}{\underline{\operatorname{Hom}}}
\renewcommand{\leq}{\leqslant}
\renewcommand{\geq}{\geqslant}
\newcommand {\lp}{\varinjlim}
\newcommand{\lo}{\varprojlim}
\newcommand{\lra}[1]{\bl{#1}\longrightarrow\relax}
\newcommand{\bl}[1]{\buildrel #1\over}
\newcommand{\cc}{\mathcal}
\newcommand{\ps}{\oplus}
\newcommand{\ff}{\mathfrak}
\newcommand{\ifff}{if and only if }
\newcommand{\bb}{\mathbb}
\begin{document}

\footskip30pt


\title{Symmetric quasi-coherent sheaves}
\author{Grigory Garkusha}
\address{Department of Mathematics, Swansea University, Fabian Way, Swansea SA1 8EN, UK}
\email{g.garkusha@swansea.ac.uk}

\begin{abstract}
Using methods of stable homotopy theory, the category of symmetric quasi-co\-herent sheaves
associated with non-commutative graded algebras with extra symmetries is introduced and studied in this paper.
It is shown to be a closed symmetric monoidal Grothendieck category with invertible generators. It is proven that
the category of quasi-coherent sheaves on a projective scheme is recovered out 
of symmetric quasi-coherent sheaves. As an application, symmetric projective schemes
associated to such algebras are introduced and studied. It is shown that 
classical projective schemes are recovered from symmetric ones.
\end{abstract}


\keywords{Quasi-coherent sheaves, stable homotopy theory, noncommutative projective geometry}
\subjclass[2010]{14F45, 14A22, 16S38, 55P42}

\maketitle
\thispagestyle{empty}
\pagestyle{plain}

\tableofcontents

\section{Introduction}

The construction of stable homotopy theory of $S^1$-spectra $Sp_{S^1}$ is reminiscent of that
for quasi-coherent sheaves on a projective variety. This point of view leads to a new model for spectra in~\cite{GTLMS}
saying that $Sp_{S^1}$ is recovered from symmetric graded modules $\Mod\cc S_k$ over the commutative symmetric ring spectrum
   $$\cc S_k=(Fr_0(\Delta^\bullet,pt),Fr_1(\Delta^\bullet,S^1),Fr_2(\Delta^\bullet,S^2),\ldots),$$
where $k$ is an algebraically closed field of characteristic zero. This result is based on the computation of the classical sphere
spectrum in terms of Voevodsky's framed correspondences~\cite{GPJAMS}. Informally speaking, $S^1$ is a kind of an
invertible sheaf on the ``projective variety associated to the sphere spectrum".

In this paper we move in the opposite direction. Namely, we use methods of stable homotopy theory to
introduce symmetric quasi-coherent sheaves for graded algebras with extra symmetries (also see some discussions at the end of
Section~\ref{dubois}). A typical example of such
an algebra is the tensor algebra $T(V)$ of a vector space $V$. It is not commutative as an ordinary ring but it is commutative
as a symmetric graded ring -- see Section~\ref{symmsection} for details. An abundance 
of such algebras appears in stable (motivic) homotopy theory. For such ``algebras with extra symmetries" we
introduce the category of symmetric quasi-coherent sheaves $\QGr^\Sigma E$ in Section~\ref{qgrsigma}. 
It shares lots of common properties with symmetric spectra in stable homotopy theory.
In contrast with the category $\QGr E$ used in noncommutative projective algebraic geometry~\cite{AZ,S,V}
the category $\QGr^\Sigma E$ is closed symmetric monoidal with a family of invertible generators. 
More precisely, we prove the following theorem in Section~\ref{qgrsigma}.

\begin{thmm}\label{itogointr}
Let $E$ be a commutative graded symmetric $k$-algebra. Then $\QGr^\Sigma E$ is a closed symmetric
monoidal enriched locally finitely presented Grothendieck category with a family of invertible
generators $\{\cc O^\Sigma(n)\}_{n\in\bb Z}$ such that $\cc O^\Sigma(m)\boxtimes\cc O^\Sigma(n)\cong\cc O^\Sigma(m+n)$.
\end{thmm}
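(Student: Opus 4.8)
The plan is to build the category $\QGr^\Sigma E$ by mimicking the construction of symmetric spectra and the category $\QGr E$ in noncommutative projective geometry, then to verify each claimed structural property in turn. First I would fix the category $\Gr^\Sigma E$ of symmetric graded $E$-modules; since $E$ is a commutative monoid in the symmetric monoidal category of symmetric sequences, $\Gr^\Sigma E$ is a Grothendieck category carrying a closed symmetric monoidal tensor product $\otimes_E$ (internal to symmetric sequences), with the shifted free modules $E^\Sigma(n)$ serving as a family of generators and with $E^\Sigma(m)\otimes_E E^\Sigma(n)\cong E^\Sigma(m+n)$. Local finite presentability of $\Gr^\Sigma E$ follows because the generators $E^\Sigma(n)$ are finitely presented and the tensor product preserves finitely presented objects. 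Next I would identify the torsion subcategory $\Tors^\Sigma E$ of "locally bounded'' symmetric modules (those that are colimits of modules bounded in the grading), check it is a hereditary torsion class closed under $\otimes_E$, and define $\QGr^\Sigma E := \Gr^\Sigma E/\Tors^\Sigma E$ as the Serre quotient, with $\cc O^\Sigma(n)$ the image of $E^\Sigma(n)$.

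The structural verification then proceeds in four steps. (i) $\QGr^\Sigma E$ is a Grothendieck category: this is automatic as a Serre quotient of a Grothendieck category by a localizing subcategory, and local finite presentability is inherited provided $\Tors^\Sigma E$ is generated by finitely presented objects, which one checks from the explicit description of bounded modules. (ii) The monoidal structure descends to the quotient: by the universal property of Serre localization, $\otimes_E$ induces $\boxtimes$ on $\QGr^\Sigma E$ precisely because $\Tors^\Sigma E$ is a $\otimes_E$-ideal; the internal hom descends dually, using that the quotient functor has a fully faithful right adjoint (the section functor) and that one can compute internal homs via that adjoint. Closedness — i.e. $\boxtimes$ is left adjoint in each variable — then transfers along the adjunction. (iii) $\cc O^\Sigma(m)\boxtimes\cc O^\Sigma(n)\cong\cc O^\Sigma(m+n)$ is the image under the (strong monoidal) quotient functor of the corresponding isomorphism in $\Gr^\Sigma E$. (iv) Invertibility of each $\cc O^\Sigma(n)$: the inverse is $\cc O^\Sigma(-n)$, and $\cc O^\Sigma(n)\boxtimes\cc O^\Sigma(-n)\cong\cc O^\Sigma(0)=\cc O^\Sigma$, the monoidal unit; that $\cc O^\Sigma$ is the unit is because $E^\Sigma(0)=E$ is the unit of $\otimes_E$ and the quotient functor is strong monoidal. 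Generation by $\{\cc O^\Sigma(n)\}_{n\in\bb Z}$ follows from generation by $\{E^\Sigma(n)\}$ in $\Gr^\Sigma E$ together with exactness and essential surjectivity of the quotient functor.

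The main obstacle — and the step requiring the symmetric-spectra technology rather than the classical $\QGr$ argument — is establishing that the generators become \emph{invertible} after passing to the quotient, equivalently that $\cc O^\Sigma(1)\boxtimes\cc O^\Sigma(-1)\to\cc O^\Sigma$ is an isomorphism in $\QGr^\Sigma E$ even though $E^\Sigma(1)\otimes_E E^\Sigma(-1)\to E$ is typically \emph{not} an isomorphism in $\Gr^\Sigma E$ (this is exactly the phenomenon that in symmetric spectra $S^1$ is invertible only after stabilization/localization, not before). So the heart of the proof is a "stable'' computation: one must show the cone of the natural map $E^\Sigma(1)\otimes_E E^\Sigma(-1)\to E$ is a torsion (locally bounded) symmetric module. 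I expect this to follow from the definition of the symmetric structure on $E$ — the extra $\Sigma_n$-actions make the "positive'' and "negative'' shift functors mutually inverse up to bounded modules — combined with a cofinality/telescope argument over the grading, analogous to how $\colim_n (S^n \wedge -)$ inverts $S^1$. Once that single isomorphism is in hand, invertibility of all $\cc O^\Sigma(n)$ follows by induction using $\cc O^\Sigma(m)\boxtimes\cc O^\Sigma(n)\cong\cc O^\Sigma(m+n)$, and the remaining assertions (enrichment, local finite presentability, Grothendieck) are formal consequences of the constructions above.
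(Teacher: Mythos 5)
The central gap lies in your definition of $\Tors^\Sigma E$. You propose taking the ``locally bounded'' symmetric modules, by analogy with the nonsymmetric case where $\Tors E$ consists precisely of modules each of whose elements is annihilated by some $A_{\geq n}$ (Proposition~\ref{qqq}). The paper does something crucially different: it defines $\Tors^\Sigma E$ cohomologically, as the class left-orthogonal to the injective \emph{stably fibrant} modules $Q$, where $Q$ is stably fibrant iff $[a_n,Q]$ is an isomorphism for all $n$, with $a_n\colon E[n]\wedge_E F_nE\to E$ the adjoint of the identity of $E[n]$. Equivalently, $\Tors^\Sigma E$ is the smallest shift-stable localizing tensor subcategory containing $\kr a_n$ and $\coker a_n$ for all $n\geq 0$. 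With this definition, invertibility of $\cc O^\Sigma(n)$ holds essentially by construction: the cone of $a_n$ is torsion because the torsion class was built around that very requirement.

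Your route runs into exactly the subtlety that afflicts symmetric spectra. The map $a_1$ is the analogue of the Hovey--Shipley--Smith map $\lambda\colon F_1S^1\to S$, which is a stable equivalence but not a $\pi_*$-isomorphism; its cofiber has nontrivial stable homotopy and is in no sense ``bounded''. Correspondingly, the cone of $E^\Sigma(1)\otimes_E E^\Sigma(-1)\to E$ is generically \emph{not} a locally bounded symmetric module, so if you define $\Tors^\Sigma E$ as locally bounded modules you simply do not get invertible generators in the quotient. The telescope/cofinality heuristic you invoke would not save this: in the symmetric setting the naive telescope fails to give a stably fibrant replacement, again mirroring the known pathology in $Sp^\Sigma$. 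The logic has to be reversed: rather than fixing a torsion class in advance and hoping the cone of $a_n$ falls into it, you define the torsion class to be generated by those cones (closed under shift and tensoring by the $F_nE$), and then prove it is a localizing tensor subcategory. That last step is the paper's Lemma~\ref{tensor}; it uses the coend presentation $M=\int^n M_n\otimes F_nE$ and the flatness of each $F_nE$ (Lemma~\ref{exact}) to reduce to showing $[F_nE\wedge_E T,Q]\cong[T,Q[n]]=0$, which works precisely because $Q[n]$ remains stably fibrant.

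A secondary point: you dismiss ``enrichment, local finite presentability'' as formal. The paper's route is through a non-obvious but clean observation (Lemma~\ref{bonart}): an invertible object $X$ in a closed symmetric monoidal category is automatically strongly dualizable and $[X,-]\cong X^{-1}\otimes-$, which therefore preserves direct limits; whence the generators $\cc O^\Sigma(n)$ are enriched finitely presented (Lemma~\ref{elfpc}). Attempting to verify finite presentability of the generators directly is murkier, so it is worth making this implication explicit rather than filing it under ``formal''.
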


In his celebrated paper~\cite{Se}, Serre reconstructed the category of quasi-coherent sheaves
on a projective scheme out of graded modules. This theorem inspired mathematicians to introduce and study 
noncommutative projective algebraic geometry associated to noncommutative graded algebras -- see, for example,~\cite{AZ,S,V}.
In Section~\ref{reconstrsection} we show that the category of classical quasi-coherent sheaves on projective schemes is recovered from
symmetric quasi-coherent sheaves. Namely, we prove the following result.

\begin{thmm}[Reconstruction]\label{reconstrintr}
Suppose $R$ is a commutative finitely generated graded ring with $R_0$
being a $\bb Q$-algebra. Then there are equivalences of categories
   $$F:\QGr R\leftrightarrows\QGr^\Sigma R:U,$$
where $U$ is the forgetful functor. Here $R$ is regarded
as a commutative graded symmetric $R_0$-algebra with trivial action of the symmetric groups.

In particular, if $X=\Proj(R)$ is the projective scheme associated with a 
commutative graded ring $R$ with $R_0$ being a Noetherian
$\bb Q$-algebra such that $R$ is generated by finitely many elements of $R_1$,
then the composite functor
   $$\Qcoh X\lra{\Gamma_*}\QGr^{\bb Z}R\lra{I}\QGr R\lra{F}\QGr^\Sigma R$$
is an equivalence of categories, where $\Gamma_*(\cc F)=\bigoplus_{d=-\infty}^{+\infty}H^0(X,\cc F\otimes\cc O(d))$
is Serre's equivalence~\cite{Se} and $I$ is an equivalence of Theorem~\ref{equivqgr}.
\end{thmm}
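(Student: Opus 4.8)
The plan is to prove the Reconstruction Theorem in two stages, corresponding to the two halves of the statement. For the first and main assertion, the strategy is to exhibit the forgetful functor $U:\QGr^\Sigma R\to\QGr R$ together with a left adjoint (or quasi-inverse) $F$, and then show both composites are naturally isomorphic to identities. Since $R$ carries the trivial action of the symmetric groups, a symmetric graded $R$-module is the same data as an ordinary graded $R$-module together with, for each $n$, an action of $\Sigma_n$ on the degree-$n$ part which is required to be compatible with the multiplication by $R_1$ in the sense dictated by the symmetric-algebra formalism of Section~\ref{symmsection}. The key point — and this is where the hypothesis that $R_0$ is a $\bb Q$-algebra enters decisively — is that over a $\bb Q$-algebra every such $\Sigma_n$-representation is canonically split by its isotypic decomposition, so that the ``extra symmetry'' data on top of an ordinary graded module is not merely unique up to isomorphism but genuinely rigid: averaging over $\Sigma_n$ (dividing by $n!$, which requires $\bb Q\subseteq R_0$) produces a canonical idempotent, and the functor $F$ sends an ordinary module $M$ to the symmetric module whose underlying module is $M$ with the symmetry forced by this averaging procedure. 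First I would verify that $F$ and $U$ are well-defined exact functors between the two Grothendieck categories (in particular that $F$ respects the torsion subcategories defining $\QGr$ from $\Gr$), then that $UF\cong\id$ is immediate, and finally that $FU\cong\id$, which amounts to the rigidity statement that any symmetric structure over a $\bb Q$-algebra agrees with the canonically induced one.

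The hard part will be this last isomorphism $FU\cong\id_{\QGr^\Sigma R}$: one must show that an arbitrary symmetric quasi-coherent sheaf over $R$ (with $R$ having trivial symmetric-group action) has its symmetry uniquely determined by its underlying $\QGr R$-object. The obstacle is that a priori a symmetric module could carry a ``twisted'' $\Sigma_n$-action unrelated to the trivial one on $R$; the resolution is that the structure maps of a symmetric $R$-module are $\Sigma_m\times\Sigma_n$-equivariant and generate the whole module from $R_0$ in high degrees, so the action in each degree is constrained to be a permutation action in a suitable basis, and in characteristic zero this forces it to coincide with the induced one. Concretely I expect this to be carried out by an induction on degree, using that $R$ is finitely generated in degree one (so each $M_n$ is a quotient of $M_0\otimes_{R_0} R_1^{\otimes n}$) together with the $\bb Q$-coefficient averaging to pin down the $\Sigma_n$-action; passing from $\Gr$ to $\QGr$ is then harmless because the relevant identifications only need to hold in large degree, which is exactly what the Serre quotient forgets.

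For the second assertion, I would simply compose. Theorem~\ref{equivqgr} (the quoted equivalence $I$) identifies $\QGr^{\bb Z}R$ with $\QGr R$ when $R$ is generated by finitely many elements of degree one over a Noetherian base, and Serre's theorem~\cite{Se} gives that $\Gamma_*:\Qcoh X\to\QGr^{\bb Z}R$ is an equivalence for $X=\Proj(R)$ under precisely the stated finiteness hypotheses (Noetherian $R_0$, $R$ generated in degree one). Since a Noetherian $\bb Q$-algebra is in particular a $\bb Q$-algebra, the first part applies to give that $F:\QGr R\to\QGr^\Sigma R$ is an equivalence, and the composite $F\circ I\circ\Gamma_*$ is therefore an equivalence as a composition of three equivalences; no further work is needed beyond checking that the hypotheses of the three cited results are implied by those in the statement, which they are.

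One remark on bookkeeping: throughout, I would be careful that the monoidal and enrichment structures furnished by Theorem~\ref{itogointr} on $\QGr^\Sigma R$ restrict, under $F$, to the usual tensor product of quasi-coherent sheaves on $X$ — this is not strictly needed for the equivalence of \emph{categories} claimed here, but it is what makes the reconstruction meaningful, and it follows formally once one knows $F$ sends $\cc O(n)$ to $\cc O^\Sigma(n)$, which is visible from the construction of $F$ on the generators.
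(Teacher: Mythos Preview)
Your proposal has a genuine gap in the argument for $FU\cong\id$. The rigidity claim --- that over a $\bb Q$-algebra any symmetric structure on a graded $R$-module must agree with the trivial one --- is simply false in $\Gr^\Sigma R$: the free module $F_nR$ carries a nontrivial $\Sigma_m$-action in every degree $m\geq n$, and no averaging collapses it. Your concrete mechanism, that ``each $M_n$ is a quotient of $M_0\otimes_{R_0}R_1^{\otimes n}$'', presupposes both that $R$ is generated in degree one (not assumed in the first assertion) and that $M$ is generated in degree zero (false already for $F_nR$, $n>0$). Most importantly, the passage from $\Gr$ to $\QGr$ is not ``harmless'' but is the entire content: the torsion class $\Tors^\Sigma R$ is defined by forcing the maps $a_n:R[n]\wedge_RF_nR\to R$ to become isomorphisms, and these have \emph{unbounded} kernels and cokernels, so $\Tors^\Sigma R$ is strictly larger than the class of bounded modules. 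Your argument never invokes this localization, so it cannot distinguish $\QGr^\Sigma R$ from $\Gr^\Sigma R$, where the desired equivalence fails. (Relatedly, your $F$ --- trivial action --- is not even adjoint to $U$; it is left adjoint to invariants and right adjoint to coinvariants.)

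The paper's route is different in both the choice of $F$ and the use of the $\bb Q$-hypothesis. It takes $F=q\circ V$ with $V$ the genuine left adjoint to $U$ (the free symmetric module functor, $R(-n)\mapsto F_nR$), so that the triangle identities reduce everything to showing the unit $\eta_X:X\to UF(X)$ is a stable equivalence for all $X\in\Gr R$. Maschke's theorem enters not to rigidify actions but to show that an injective torsionfree $Q\in\Gr R$, viewed with trivial action, is injective and stably fibrant in $\Gr^\Sigma R$: any monomorphism in $\Gr^\Sigma R$ is degreewise split as a map of rational representations, hence remains a monomorphism on coinvariants, which is what one needs to extend maps into $Q$. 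With this in hand one verifies $\eta$ directly for suspension modules $\Sigma^\infty_RM_{Nn}$, transports to their shifts $L_{\geq Nn}$ using the monoidal structure and Corollary~\ref{bonartcor}, and then passes to directed unions using that $\Tors R$ is of finite type. Your treatment of the second assertion --- composing Serre's equivalence, Theorem~\ref{equivqgr}, and the first part --- is correct and matches the paper.
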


As an application of the theorems above we introduce and study symmetric projective schemes
associated to non-commutative graded algebras with extra symmetries in Section~\ref{sectionproj}. 
By definition, such a scheme is a ringed space
$(\Proj^\Sigma E,\cc O_{\Proj^\Sigma E})$, where the space $\Proj^\Sigma E$ consists of the prime $\Sigma$-ideals
(equipped with Zariski's topology). Remarkably, $\cc O_{\Proj^\Sigma E}$ is
a sheaf of {\it commutative\/} rings. It is worth noting that the classical projective space $\bb P(V)$ of a 
free $k$-module $V$ is closed in its symmetric projective space $\bb P^\Sigma(V)$ defined as $\Proj^\Sigma T(V)$.

We prove the following theorem here.

\begin{thmm}
Suppose $E$ is finitely $\Sigma$-generated. Then the space $\Proj^\Sigma E$ is spectral, i.e.
it is $T_0$ and quasi-compact, the quasi-compact open
subsets are closed under finite intersections and form an open
basis, and every non-empty irreducible closed subset has a generic point. Moreover, if
$E$ is a commutative finitely generated graded ring with $E_0$
being a $\bb Q$-algebra, then the classical projective scheme $(\Proj E,\cc O_{\Proj E})$
can be identified with $(\Proj^\Sigma E,\cc O_{\Proj^\Sigma E})$. Here $E$ is regarded
as a commutative graded symmetric $E_0$-algebra with trivial action of the symmetric groups.
\end{thmm}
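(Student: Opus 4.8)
The plan is to prove the three assertions in turn: (i) $\Proj^\Sigma E$ is spectral when $E$ is finitely $\Sigma$-generated, (ii) the structure sheaf $\cc O_{\Proj^\Sigma E}$ is a sheaf of commutative rings, and (iii) in the classical case the ringed space $(\Proj^\Sigma E,\cc O_{\Proj^\Sigma E})$ is isomorphic to $(\Proj E,\cc O_{\Proj E})$. For (i) I would mimic the standard proof that $\Proj$ of a finitely generated graded ring is spectral: the prime $\Sigma$-ideals form the basic open sets $D(f)$ as $f$ ranges over homogeneous elements, and finite $\Sigma$-generation forces a finite affine-type cover, giving quasi-compactness; $T_0$ and the basis/intersection properties are formal from the lattice of $\Sigma$-ideals, and the existence of generic points of irreducible closeds follows because the correspondence between irreducible closeds and prime $\Sigma$-ideals is order-reversing and every proper $\Sigma$-ideal is contained in a prime one (a Zorn's lemma argument, using that the set of homogeneous elements is multiplicatively closed in the appropriate sense). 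The cleanest route is probably to produce a spectral map to an honest $\Proj$ or to $\Spec$ of a commutative ring and invoke Hochster-style criteria, but doing it by hand as above avoids extra machinery.

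For (ii), the key input is the symmetric structure from Theorem~\ref{itogointr}: the category $\QGr^\Sigma E$ is closed symmetric monoidal, so the endomorphism sheaf of the unit $\cc O^\Sigma$, and more generally the graded endomorphism ring $\bigoplus_n \Hom(\cc O^\Sigma,\cc O^\Sigma(n))$, is commutative. I would define $\cc O_{\Proj^\Sigma E}$ on a basic open $D(f)$ by an appropriate localization/section functor built from homogeneous-degree-zero parts of these endomorphism objects, and commutativity of the monoidal product (specifically the fact that $T(V)$ and its relatives are commutative as symmetric graded rings, Section~\ref{symmsection}) immediately yields that each ring of sections is commutative; sheafiness is then checked on the basis $\{D(f)\}$ by the usual gluing of localizations, which goes through because localization is exact and compatible with the symmetric structure.

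For (iii), I would invoke the Reconstruction Theorem~\ref{reconstrintr}: under its hypotheses ($E$ commutative, finitely generated, $E_0$ a $\bb Q$-algebra, with trivial $\Sigma$-action) there is an equivalence $F:\QGr E\leftrightarrows\QGr^\Sigma E:U$. One shows that $F$ and $U$ carry prime ideals to prime $\Sigma$-ideals and back, so the underlying spaces are homeomorphic; since $\Proj E$ is already identified with the classical scheme $\Proj(E)$ via Serre's theorem (also cited in Theorem~\ref{reconstrintr}), the spaces match. Then one checks that the equivalence is monoidal in the relevant sense, so it identifies the endomorphism-ring sheaf computed on the symmetric side with the classical structure sheaf $\cc O_{\Proj E}$; here the triviality of the $\Sigma$-action is what makes the symmetric monoidal product restrict to the ordinary tensor product on the nose, so no correction terms appear.

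The main obstacle I expect is (ii)–(iii) done carefully: one must verify that the definition of $\cc O_{\Proj^\Sigma E}$ via degree-zero parts of internal-hom objects genuinely sheafifies on the basis and that, in the classical case, this recovers $\cc O_{\Proj E}$ rather than some twisted or completed variant — in other words, that the passage from the abstract monoidal category $\QGr^\Sigma E$ to an honest ringed space does not lose or distort information. The finite $\Sigma$-generation hypothesis should be exactly what guarantees that the relevant localizations are Noetherian/finite enough for the gluing to behave, and the $\bb Q$-algebra hypothesis on $E_0$ (inherited from Theorem~\ref{reconstrintr}, where it controls the representation theory of the symmetric groups) is what makes the trivial-action comparison exact. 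Everything else — $T_0$, quasi-compactness, the basis properties, generic points — is a routine adaptation of the commutative $\Proj$ story to the lattice of $\Sigma$-ideals.
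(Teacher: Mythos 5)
Your plan has the right overall scaffolding — a Hochster-style proof of spectrality by hand, commutativity of the structure sheaf from the symmetric monoidal structure, and the Reconstruction Theorem for the identification — but there are two genuine gaps in the technical execution.

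First, in part (i) you describe the Zorn's-lemma step (every proper $\Sigma$-ideal avoiding powers of $a$, and maximal such, is prime) as routine, ``using that the set of homogeneous elements is multiplicatively closed in the appropriate sense.'' This glosses over the one point that is actually nontrivial in the $\Sigma$-setting. When $b,c\notin Q$ and $bc\in Q$, one must show $a^{t+s}\in Q$; but elements of the $\Sigma$-closures $(b)^\Sigma,(c)^\Sigma$ have the form $\sum_i\sigma_i(br_i)$ and $\sum_j\tau_j(cr_j')$ with permutations $\sigma_i,\tau_j$ acting, and the product $\sigma_i(br_i)\cdot\tau_j(cr_j')$ does not visibly lie in $Q$ until one uses the commutativity of the symmetric graded ring to rewrite it as $(\sigma_i\times\tau_j)(\id\times\chi_{\deg(r_i),\deg(c)}\times\id)(bcr_ir_j')$, which lands in $Q$ because $Q$ is a $\Sigma$-ideal closed under the group action. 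Without this shuffle-permutation bookkeeping (which is exactly where the hypothesis that $E$ is a \emph{commutative} symmetric algebra enters), the argument breaks; it is not a formal consequence of multiplicative closedness.

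Second, in part (iii) you propose to show that the functors $F,U$ of the Reconstruction Theorem ``carry prime ideals to prime $\Sigma$-ideals and back, so the underlying spaces are homeomorphic.'' This is the wrong mechanism: $F$ and $U$ are functors between module categories, not maps of ideal lattices, and there is no need to transport primes through them. For a commutative $R$ with trivial $\Sigma$-action, a $\Sigma$-ideal \emph{is} just an ideal (the symmetric-group action is trivial, so $\Sigma$-closure is the identity), hence $\Proj^\Sigma R=\Proj R$ as spaces on the nose by construction — this is Example~\ref{primerproj}(1). The role of Theorem~\ref{reconstr} is confined to the structure sheaves: $\cc O_{\Proj R}$ and $\cc O_{\Proj^\Sigma R}$ are built by the same recipe (endomorphism rings of the monoidal unit in localizations $\QGr^\Sigma R/\cc T_D$, respectively $\QGr R$ localized at the corresponding torsion classes, followed by sheafification), and the equivalence $\QGr R\simeq\QGr^\Sigma R$ identifies these two constructions. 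Also, a minor point on part (ii): the paper sidesteps your worry about checking sheafiness on the basis by simply sheafifying the presheaf $D\mapsto\End_{\QGr^\Sigma E/\cc T_D}(\cc O^D)$ after extending it by inverse limits to all opens; and it is the degree-zero endomorphism ring $\End(\cc O^D)$ of the unit in the quotient category — not the full graded endomorphism ring $\bigoplus_n\Hom(\cc O^\Sigma,\cc O^\Sigma(n))$ — whose commutativity is the standard fact (\cite[Proposition~XI.2.4]{K}) about monoidal units.
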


Thus we can extend classical projective geometry to non-commutative algebras with extra symmetries
in such a way that the classical projective schemes are recovered from symmetric ones. This opens new avenues for research
not only in projective algebraic geometry but also in stable homotopy theory. But this will be the material of subsequent papers.

If $\mathcal C$ is a closed symmetric monoidal category, we shall write $[A,C]\in\cc C$
to denote the internal Hom-object $\uhom(A,C)$ associated with 
$A,C\in\mathcal C$ unless it is specified otherwise.
We assume 0 to be a natural number.

\section{Graded rings and modules}\label{prelim}

In this section we recall some basic facts about graded rings and
modules.

\begin{defs}{\rm
A {\it (positively) graded ring\/} is a ring $A$ together with a
direct sum decomposition $A=A_0\ps A_1\ps A_2\ps\cdots$ as abelian
groups, such that $A_iA_j\subset A_{i+j}$ for $i,j\geq 0$. A {\it
homogeneous element\/} of $A$ is simply an element of one of the
groups $A_j$, and a {\it homogeneous right ideal\/} of $A$ is a right ideal
that is generated by homogeneous elements. A {\it $\bb Z$-graded right
$A$-module\/} (respectively {\it graded right
$A$-module\/}) is an $A$-module $M$ together with a direct sum
decomposition $M=\ps_{j\in\bb Z}M_j$ (respectively $M=\ps_{j\in\bb N}M_j$) as abelian groups, such that
$M_jA_i\subset M_{j+i}$ for $i\geq 0,j\in\bb Z$ (respectively $j\in\bb N$). One calls $M_j$ the
$j$th {\it homogeneous component of $M$}. The elements $x\in M_j$
are said to be {\it homogeneous (of degree $j$)}.

Note that $A_0$ is a ring with $1\in A_0$, that all
summands $M_j$ are $A_0$-modules, and that $M=\ps_{j\in\bb Z}M_j$ (respectively $M=\ps_{j\in\bb N}M_j$) is
a direct sum decomposition of $M$ as an $A_0$-module.

Let $A$ be a graded ring. The {\it category of $\bb Z$-graded $A$-modules}
(respectively {\it graded $A$-modules}),
denoted by $\Gr^{\bb Z} A$ (respectively $\Gr A$), has as objects the $\bb Z$-graded $A$-modules
(respectively graded $A$-modules). A {\it
morphism\/} of graded $A$-modules $f:M\to N$ is an $A$-module
homomorphism satisfying $f(M_j)\subset N_j$ for all $j$. An
$A$-module homomorphism which is a morphism in $\Gr^{\bb Z}A$ (respectively $\Gr A$) is said to be
{\it homogeneous}. Note that $\Gr A$ is a full subcategory of $\Gr^{\bb Z}A$.

Let $M$ be a ($\bb Z$-)graded $A$-module and let $N$ be a submodule of $M$.
Say that $N$ is a {\it graded submodule\/} if it is a graded module
such that the inclusion map is a morphism in $\Gr A$. 
If $d$ is an
integer the {\it tail\/} $M_{\geq d}$ is the graded submodule of $M$
having the same homogeneous components $(M_{\geq d})_j$ as $M$ in
degrees $j\geq d$ and zero for $j<d$. We also denote the ideal
$A_{\geq 1}$ by $A_+$.
}\end{defs}

For $n\in\bb Z$, $\Gr A$ comes equipped with a shift functor
$M\mapsto M(n)$ where $M(n)$ is defined by $M(n)_j=M_{n+j}$. Then $\Gr^{\bb Z} A$ is a
Grothendieck category with a family of projective generators $\{A(n)\}_{n\in\bb
Z}$. Likewise, $\Gr A$ is a Grothendieck category with a family of 
projective generators $\{A(-n)\}_{n\in\bb N}$.

If $A$ is commutative, the tensor product for the category of all $A$-modules induces
a tensor product on $\Gr A$: given two ($\bb Z$-)graded $A$-modules $M,N$ and
homogeneous elements $x\in M_i,y\in N_j$, set $\deg(x\otimes
y):=i+j$. We define the {\it homomorphism $A$-module\/} $\cc
Hom_A(M,N)$ to be the graded $A$-module which is, in dimension $n$, the group $\cc
Hom_A(M,N)_n$ of graded $A$-module homomorphisms of
degree $n$, i.e.,
   $$\cc Hom_A(M,N)_n=\Gr^{\bb Z} A(M,N(n)).$$

We say that a ($\bb Z$-)graded $A$-module $M$ is {\it finitely generated\/} if
it is a quotient of a free graded module of finite rank
$\bigoplus_{s=1}^nA(d_s)$ for some $d_1,\ldots,d_s$. Say that $M$ is
{\it finitely presented\/} if there is an exact sequence
   $$\bigoplus_{t=1}^mA(c_t)\to\bigoplus
   _{s=1}^nA(d_s)\to M\to 0.$$
Note that any ($\bb Z$-)graded $A$-module is a direct
limit of finitely presented ($\bb Z$-)graded $A$-modules, and therefore $\Gr^{\bb Z} A$ and $\Gr
A$ are locally finitely presented Grothendieck categories.

A {\it  sh-torsion class\/} (`sh' for shift) in $\Gr^{\bb Z} A$ (respectively 
$\Gr A$) is a torsion class $\cc S\subset\Gr^{\bb Z} A$ (respectively $\cc S\subset\Gr A$) 
such that for any $X\in\cc S$ the shift $X(-n)$ is in $\cc S$ for any $n\in\bb Z$
(respectively $n\in\bb N$). By~\cite[Lemma~5.2]{GP2}
if $A$ is a commutative graded ring, then a torsion class $\cc S$ is sh-torsion 
in $\Gr^{\bb Z} A$ \ifff it is tensor, i.e. $X\otimes Y\in\cc S$ for any $X\in\cc S$ and $Y\in\Gr^{\bb Z}A$.

The following lemma was proven for commutative graded rings in~\cite[Lemma~5.3]{GP2}
(logically the reader should now read Appendix and then return to this section).

\begin{lemma}\label{Gabr}
Let $A$ be a graded ring. The map
   $$\cc S\longmapsto\ff F(\cc S)=\{\ff a\subseteq A\mid A/\ff a\in\cc S\}$$
establishes a bijection between the sh-torsion classes in $\Gr^{\bb Z} A$
(respectively in $\Gr A$) and the sets $\ff F$ of homogeneous right ideals satisfying the
following axioms:
\begin{itemize}
\item[$T1.$] $A\in\ff F$;
\item[$T2.$] if $\ff a\in\ff F$ and $a$ is a homogeneous element of
$A$ then $(\ff a:a)=\{x\in A\mid ax\in\ff a\}\in\ff F$;
\item[$T3.$] if $\ff a$ and $\ff b$ are homogeneous right ideals of $A$ such that
    $\ff a\in\ff F$ and $(\ff b:a)\in\ff F$
    for every homogeneous element $a\in\ff a$ then $\ff b\in\ff F$.
\end{itemize}
We shall refer to such filters as {\em $t$-filters}. Moreover, $\cc
S$ is of finite type \ifff $\ff F(\cc S)$ has a basis of finitely
generated right ideals, that is every ideal in $\ff F(\cc S)$ contains a
finitely generated right ideal belonging to $\ff F(\cc S)$. In this case
$\ff F(\cc S)$ will be refered to as a {\em $t$-filter of finite
type}.
\end{lemma}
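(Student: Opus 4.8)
The plan is to adapt the classical Gabriel correspondence between hereditary torsion theories and Gabriel topologies to the graded setting, using the fact that both $\Gr^{\bb Z}A$ and $\Gr A$ have the family of shifts $\{A(n)\}$ as projective generators. First I would show that the assignment $\cc S\mapsto\ff F(\cc S)$ is well defined: if $\cc S$ is a sh-torsion class, then $\ff F(\cc S)$ satisfies $T1$ trivially (since $0\in\cc S$), and $T2$ follows because for a homogeneous $a\in A_d$ there is a homogeneous epimorphism $A(d)\twoheadrightarrow aA\hookrightarrow A/\ff a$ with kernel $(\ff a:a)(d)$, so $A/(\ff a:a)\cong (A/(\ff a:a))$ embeds (up to shift) in the torsion object $A/\ff a$; here the shift-stability of $\cc S$ is exactly what lets us pass between $A(d)/(\ff a:a)(d)$ and $A/(\ff a:a)$. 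For $T3$ I would filter $A/\ff b$ by $\ff a/\ff b$ and $A/\ff a$: the quotient $A/\ff a$ is torsion by hypothesis, and $\ff a/\ff b$ is generated by the images of homogeneous elements $a\in\ff a$, each generating a homogeneous cyclic module that is a shifted quotient of $A/(\ff b:a)\in\cc S$; closure of $\cc S$ under shifts, quotients, and (direct limits of) extensions then gives $A/\ff b\in\cc S$.

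**The inverse map and the fact that it is inverse.** Conversely, given a $t$-filter $\ff F$, I would define $\cc S=\cc S(\ff F)$ to be the class of graded modules $M$ such that every homogeneous element $x\in M_d$ has annihilator ideal $\ann(x)=\{a\in A: xa=0\}$ — which is a homogeneous right ideal after the degree-$d$ shift — lying in $\ff F$, i.e. $A/\ann(x)\in\cc S$ in the naive sense; more precisely, that the cyclic submodule generated by $x$ is a quotient of $A(-d)/\ff a(-d)$ for some $\ff a\in\ff F$. The axioms $T1$–$T3$ translate precisely into: $\cc S$ contains $0$, is closed under quotients (clear), under submodules (using $T2$ to handle the shifted annihilators of elements of a submodule), under extensions (this is where $T3$ enters — given $0\to M'\to M\to M''\to 0$ with $M',M''\in\cc S$, for $x\in M_d$ one takes $\ff a$ with $x$'s image in $M''$ killed appropriately, then for each homogeneous $a\in\ff a$ the element $xa$ lands in $M'$ so has annihilator in $\ff F$, and $T3$ forces $\ann(x)\in\ff F$), and under direct limits/coproducts (since torsion is detected elementwise on homogeneous elements). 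Shift-stability of $\cc S(\ff F)$ is built into the definition. That the two assignments are mutually inverse is then a formal check: $\ff F(\cc S(\ff F))=\ff F$ because $A/\ff a\in\cc S(\ff F)$ iff $\ann(\bar 1)=\ff a\in\ff F$; and $\cc S(\ff F(\cc S))=\cc S$ because a graded module is in $\cc S$ iff all its homogeneous cyclic subobjects are, iff all the relevant $A/\ff a$ with $\ff a\in\ff F(\cc S)$ are.

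**The finite type statement.** For the last clause I would recall that $\cc S$ is of finite type exactly when the associated torsion radical commutes with direct limits, equivalently when $\cc S$ is generated (as a torsion class) by a set of finitely presented — here finitely generated — modules. Translating through the bijection: if $\cc S$ is generated by finitely generated modules, each such generator is a quotient of some $A(-d_i)/\ff a_i(-d_i)$ with $\ff a_i$ finitely generated, and these $\ff a_i$ (and their shifts and the ideals built from them via $T2$, $T3$) furnish a basis of finitely generated ideals for $\ff F(\cc S)$; conversely a basis of finitely generated ideals gives finitely generated generators $A/\ff a$ for $\cc S$. This is the place I expect to have to be slightly careful, since "of finite type" for torsion classes in a locally finitely presented Grothendieck category should be recalled precisely (the excerpt's Appendix presumably fixes the convention), and one must check that the shift operations do not destroy finite generation — but shifts of finitely generated ideals are finitely generated, so this is not a real obstacle.

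**Main obstacle.** The genuinely non-formal point, and the only place the hypothesis "graded ring" (rather than "commutative graded ring") matters, is verifying the torsion-theoretic axioms while keeping track of \emph{right} ideals and \emph{right} modules consistently: annihilators of elements in right modules are right ideals, and the colon ideals $(\ff a:a)$ in $T2$ must be right ideals, which they are. The commutative proof in \cite[Lemma~5.3]{GP2} can be followed essentially verbatim once one systematically replaces "ideal" by "homogeneous right ideal" and checks that no step secretly used commutativity — the only candidates are the multiplicativity arguments in $T2$ and $T3$, and those go through because left multiplication by a fixed homogeneous element is an $A$-module map $A(d)\to A$ in the category of right modules. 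So the hard part is bookkeeping rather than mathematics; the structural content is the standard Gabriel correspondence, localized to the subcategory of shift-stable torsion classes via the generating set $\{A(n)\}_{n}$.
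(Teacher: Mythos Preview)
Your proposal is correct and is essentially the paper's approach spelled out in detail: the paper's proof is a one-liner that invokes the general Gabriel correspondence between torsion classes and Gabriel filters on the family of projective generators $\{A(-n)\}$ (stated in its Appendix), and then observes that the sh-condition on the torsion side corresponds exactly to shift-stability of the filter, which collapses the data of a shift-closed Gabriel filter on $\{A(-n)\}$ to a single $t$-filter of homogeneous right ideals of $A$. Your direct verification of $T1$--$T3$, the inverse construction via homogeneous annihilators, and the finite-type clause are precisely the content of that general correspondence specialized to this case; your closing remark that ``the structural content is the standard Gabriel correspondence, localized to the subcategory of shift-stable torsion classes via the generating set $\{A(n)\}_n$'' is exactly the paper's proof.
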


\begin{proof}
It is enough to observe that there is a bijection between the
Gabriel filters on the family $\{A(-n)\}_{n\in\bb Z}$ (respectively $\{A(-n)\}_{n\in\bb N}$) of projective generators
closed under the shift functor (i.e., if $\ff a$ belongs to the
Gabriel filter then so does $\ff a(-n)$ for all $n\in\bb Z$ (respectively $n\in\bb N$)) and the
$t$-filters.
\end{proof}

The following statement was proven for commutative graded rings in~\cite[Proposition~5.4]{GP2}.

\begin{proposition}\label{eee}
The following statements are true:
\begin{enumerate}
\item Let $\ff F$ be a $t$-filter. If two-sided homogeneous 
ideals $I,J\subset A$ regarded as right ideals belong to $\ff F$, then
$IJ\in\ff F$.
\item Assume that $\ff B$ is a set of homogeneous two-sided ideals such that each ideal from 
$\ff B$ is finitely generated as a right ideal. Then the
set $\ff B'$ of finite products of two-ideals belonging to $\ff B$ is a
basis for a $t$-filter of finite type.
\end{enumerate}
\end{proposition}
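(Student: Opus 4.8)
The plan is to check the three axioms of Lemma~\ref{Gabr} directly, the new ingredient compared with the commutative case~\cite[Proposition~5.4]{GP2} being systematic use of the hypothesis that the ideals in sight are \emph{two-sided}. Two elementary facts will be used throughout: (a) if $\ff c\subseteq A$ is a two-sided ideal then $\ff c\subseteq(\ff c:a)$ for every $a\in A$, since $\ff c$ absorbs multiplication on the left; and (b) every $t$-filter is closed upwards, because if $\ff a$ is in the filter and $\ff a\subseteq\ff b$ then for homogeneous $a\in\ff a$ one has $(\ff b:a)=A$ in the filter (as $\ff b$ is a right ideal and $a\in\ff b$), whence $\ff b$ lies in the filter by $T3$.

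For~(1) I would apply $T3$ with $\ff a=I$ and $\ff b=IJ$, noting that $IJ$ is a homogeneous right ideal. For homogeneous $a\in I$ and arbitrary $x\in J$ we have $ax\in IJ$, so $J\subseteq(IJ:a)$; since $J\in\ff F$, fact~(b) gives $(IJ:a)\in\ff F$, and then $T3$ yields $IJ\in\ff F$.

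For~(2), I would first observe that a product of two homogeneous two-sided ideals that are finitely generated as right ideals is again of this form: if $I=\sum_k a_kA$ and $J=\sum_l b_lA$ with $a_k,b_l$ homogeneous, then $IJ=\sum_{k,l}a_kb_lA$ because $AJ=J$, so by induction every member of $\ff B'$ is a homogeneous two-sided ideal that is finitely generated as a right ideal. Next I claim the upward closure $\ff F=\{\ff a\mid \ff c\subseteq\ff a\ \text{for some}\ \ff c\in\ff B'\}$ is a $t$-filter of finite type with basis $\ff B'$. Axiom $T1$ is immediate; $T2$ follows from fact~(a), since $\ff c\subseteq\ff a$ with $\ff c\in\ff B'$ forces $\ff c\subseteq(\ff c:a)\subseteq(\ff a:a)$. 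The only step needing an idea is $T3$: given $\ff a\in\ff F$ with $\ff a\supseteq\ff c=\sum_{k=1}^{s}c_kA\in\ff B'$ (the $c_k$ homogeneous) and the hypothesis that $(\ff b:a)\in\ff F$ for all homogeneous $a\in\ff a$, one chooses for each $k$ an element $\ff d_k\in\ff B'$ with $\ff d_k\subseteq(\ff b:c_k)$, i.e. $c_k\ff d_k\subseteq\ff b$, and sets $\ff e=\ff c\,\ff d_1\cdots\ff d_s\in\ff B'$. Since the $\ff d_k$ are two-sided one has $\ff e=\sum_k c_k\,\ff d_1\cdots\ff d_s$, and since a product of two-sided ideals is contained in each of its factors, $c_k\,\ff d_1\cdots\ff d_s\subseteq c_k\ff d_k\subseteq\ff b$; hence $\ff e\subseteq\ff b$ and $\ff b\in\ff F$. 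Finite type is then clear from the first observation. I expect $T3$ to be the main obstacle: the point is to multiply \emph{all} of the $\ff d_k$ together (intersecting them would leave $\ff B'$) and to use both that a product of two-sided ideals lies in each factor and that $AJ=J$ for two-sided $J$.
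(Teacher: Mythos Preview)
Your argument is correct and follows essentially the same route as the paper: part~(1) is identical, and for part~(2) both you and the paper verify $T1$--$T3$ for the upward closure of $\ff B'$, handling $T3$ by the product trick $\ff c\,\ff d_1\cdots\ff d_s\subseteq\ff c(\ff d_1\cap\cdots\cap\ff d_s)\subseteq\ff b$. Your explicit check that $IJ=\sum_{k,l}a_kb_lA$ (hence members of $\ff B'$ are finitely generated as right ideals) is a welcome detail that the paper leaves implicit.
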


\begin{proof}
(1). By definition, $IJ=\{\sum_ix_iy_i\mid x_i\in I,y_i\in J\}$.
For any homogeneous element $a\in I$ we have $(IJ:a)\supset J$,
so $IJ\in\ff F$ by $T3$ and the fact that every homogeneous right ideal
containing a right ideal from $\ff F$ must belong to $\ff F$.

(2). We follow~\cite[Proposition~VI.6.10]{St}. We must check that the set $\ff
F$ of homogeneous right ideals containing ideals in $\ff B'$ is a
$t$-filter of finite type. $T1$ is plainly satisfied. Let $a$ be a
homogeneous element in $A$ and $I\in\ff F$. There is an ideal
$I'\in\ff B'$ contained in $I$. Then $(I:a)\supset I'$ and therefore
$(I:a)\in \ff F$, hence $T2$ is satisfied as well.

Next we verify that $\ff F$ satisfies $T3$. Suppose $I$ is a
homogeneous right ideal and there is $J\in\ff F$ such that
$(I:a)\in\ff F$ for every homogeneous element $a\in J$. We may
assume that $J\in\ff B'$. Let $a_1,\ldots,a_n$ be generators of
$J$ as a right ideal. Then $(I:a_i)\in\ff F$, $i\leq n$, and $(I:a_i)\supset J_i$ for
some $J_i\in\ff B'$. It follows that $a_iJ_i\subset I$ for each $i$,
and hence $JJ_1\cdots J_n\subset J(J_1\cap\ldots\cap J_n)\subset I$,
so $I\in\ff F$.
\end{proof}

\section{Torsion modules and the category $\QGr A$}\label{dubois}

Let $A$ be a graded ring. In this section we introduce
the category $\QGr^{\bb Z}A$, which is analogous to the category of
quasi-coherent sheaves on a projective variety. It plays a prominent role in
``non-commutative projective geometry" (see, e.g., \cite{AZ,S,V}).

{\bf In the remainder of this section the homogeneous ideal
$A_+\subset A$ is assumed to be finitely generated.} This is
equivalent to assuming that $A$ is a finitely generated
$A_0$-algebra ($A_0$ is not commutative in general). Note that this forces $A_i$
to be finitely generated $A_0$-module for all $i\in\bb N$.
Let $\Tors^{\bb Z}  A$ and $\Tors A$ be the sh-tensor
torsion classes of finite type in $\Gr^{\bb Z} A$ and $\Gr A$ corresponding to the family of
homogeneous finitely generated ideals $\{A^n_+\}_{n\geq 1}$ (see
Proposition~\ref{eee}). Note that $\Tors A=\Tors^{\bb Z}A\cap\Gr A$.
We call the objects of $\Tors^{\bb Z} A$ and $\Tors A$ {\it
torsion graded modules}. By construction, $M$ is torsion if and only if every 
homogeneous element of $M$ is annihilated by some power of $A_+$. As $A^n_+\subset A_{\geq n}$, $n\geq 1$,
then $A/A_{\geq n}$ is torsion. We also note that every bounded above graded 
module is torsion as its elements are annihilated by sufficiently large powers of $A_+$
(recall that a module $M$ is {\it bounded above\/} if $M_{\geq n}=0$ for some $n$).

Let $\QGr^{\bb Z} A=\Gr^{\bb Z} A/\Tors^{\bb Z} A$ and $\QGr A=\Gr A/\Tors A$. 
Let $Q:\Gr^{\bb Z}  A\to\QGr^{\bb Z} A$ and $q:\Gr A\to\QGr A$ denote the exact quotient functors.

The fully faithful embedding $i:\Gr A\to\Gr^{\bb Z}A$ is left adjoint to the truncation functor 
$\tau_{\geq 0}:\Gr^{\bb Z}A\to\Gr A$ sending $M$ to $M_{\geq 0}$. We have two functors
   $$\iota:\QGr A\hookrightarrow\Gr A\lra{i}\Gr^{\bb Z}A\lra{Q}\QGr^{\bb Z}A$$
and
   $$I:\QGr^{\bb Z}A\hookrightarrow\Gr^{\bb Z}A\xrightarrow{\tau_{\geq 0}}\Gr A\lra{q}\QGr A.$$

\begin{theorem}\label{equivqgr}
The functors $\iota:\QGr A\to\QGr^{\bb Z}A$ and $I:\QGr^{\bb Z}A\to\QGr A$ are mutually inverse 
equivalences of Gro\-then\-dieck categories.
\end{theorem}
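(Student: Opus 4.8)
The plan is to show that the two composites $I\circ\iota$ and $\iota\circ I$ are naturally isomorphic to the identity functors, and to verify along the way that both functors are exact. The starting point is the adjunction $i\dashv\tau_{\geq 0}$ between $\Gr A$ and $\Gr^{\bb Z}A$, together with the key observation that on objects the unit and counit of this adjunction have torsion kernel and cokernel. Concretely, for $M\in\Gr A$ the unit $M\to\tau_{\geq 0}iM$ is already an isomorphism, while for $N\in\Gr^{\bb Z}A$ the counit $i\tau_{\geq 0}N=N_{\geq 0}\hookrightarrow N$ has cokernel $N/N_{\geq 0}$, which is bounded above and hence in $\Tors^{\bb Z}A$. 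Thus after applying the localization functor $Q$, the counit becomes an isomorphism $Q(N_{\geq 0})\xrightarrow{\ \sim\ }QN$.

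First I would check exactness. The functor $\iota$ is the composite $Q\circ i\circ(\text{inclusion }\QGr A\hookrightarrow\Gr A)$; the inclusion is the right adjoint section of $q$ and is left exact, $i$ is exact, and $Q$ is exact, and in fact one knows that $\iota$ has the left adjoint $q\circ\tau_{\geq 0}\circ(\QGr^{\bb Z}A\hookrightarrow\Gr^{\bb Z}A)$, from which exactness on the nose follows in the standard way for sections of localizations, using that $\Tors A=\Tors^{\bb Z}A\cap\Gr A$ so that $i$ carries $\Tors A$ into $\Tors^{\bb Z}A$ and $\tau_{\geq 0}$ carries $\Tors^{\bb Z}A$ into $\Tors A$. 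The same bookkeeping shows $I$ is exact.

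Next I would construct the natural isomorphisms. For $\iota\circ I$: given $N\in\Gr^{\bb Z}A$ representing an object $QN$ of $\QGr^{\bb Z}A$, we have $\iota I(QN)=Q(i\tau_{\geq 0}N)=Q(N_{\geq 0})$, and the counit inclusion $N_{\geq 0}\hookrightarrow N$ induces, after $Q$, an isomorphism $Q(N_{\geq 0})\xrightarrow{\sim}QN$ because its cokernel $N/N_{\geq 0}$ is bounded above, hence torsion. Naturality is immediate since the counit is natural and $Q$ is a functor; that the target is the identity functor on $\QGr^{\bb Z}A$ follows because every object of $\QGr^{\bb Z}A$ is of the form $QN$. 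For $I\circ\iota$: given $M\in\Gr A$ representing $qM\in\QGr A$, we have $I\iota(qM)=q(\tau_{\geq 0}i M)=q(M)$ via the unit isomorphism $M\xrightarrow{\sim}\tau_{\geq 0}iM$, which is already an isomorphism in $\Gr A$ before localizing; applying $q$ gives a natural isomorphism $qM\xrightarrow{\sim} I\iota(qM)$.

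The main obstacle is not the formal construction but the verification that these candidate inverse equivalences are well-defined and behave well on the localized categories — i.e.\ that the displayed composites genuinely land in the claimed categories and that the isomorphisms descend through the quotient functors. The crux is the torsion-theoretic fact that $N/N_{\geq 0}$ lies in $\Tors^{\bb Z}A$: this is exactly where the hypothesis that $A_+$ is finitely generated (so that bounded-above modules are torsion) is used, and it is what makes the counit an isomorphism after localization. Once this is in hand, together with $\Tors A=\Tors^{\bb Z}A\cap\Gr A$, the fact that $\iota$ and $I$ are mutually inverse — and in particular exact equivalences of Grothendieck categories — follows formally, since an equivalence of abelian categories is automatically exact.
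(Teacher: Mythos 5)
Your proof is correct, but it takes a more formal route than the paper's. You descend the adjunction $i\dashv\tau_{\geq 0}$ to the quotient categories via the universal property of Serre localization (using that $i$ and the exact functor $\tau_{\geq 0}$ preserve torsion, so that $Q\circ i$ and $q\circ\tau_{\geq 0}$ factor through $q$ and $Q$ respectively), and then observe that the unit $M\to\tau_{\geq 0}iM$ is already an isomorphism while the counit $N_{\geq 0}\hookrightarrow N$ has bounded-above, hence torsion, cokernel and so becomes invertible after localizing. The paper instead works directly with the subcategories of closed objects: it shows by hand that $i(M)$ is torsionfree for $\Tors A$-closed $M$ and, via a splitting argument using $\Tors A$-closedness, that the $\Tors^{\bb Z}A$-localization of $i(M)$ agrees with $i(M)$ in non-negative degrees; this yields the identity $I\circ\iota=\id$ on the nose (not merely a natural isomorphism), after which fullness is checked by an explicit diagram and essential surjectivity by the same zigzag you use for $\iota\circ I\cong\id$. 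Your approach buys brevity and avoids the delicate splitting step; the paper's buys the strict identity and explicit control of the closed representatives. Two small inaccuracies worth fixing: since $i$ is the \emph{left} adjoint of $\tau_{\geq 0}$, the induced functor $\iota$ is left adjoint to $I$ (not the reverse, as you assert when discussing exactness -- harmless here, since an equivalence is adjoint to its quasi-inverse on both sides, and exactness is automatic anyway); and the fact that bounded-above modules are torsion follows directly from the description of $\Tors^{\bb Z}A$ as the modules whose homogeneous elements are killed by powers of $A_+$, rather than from finite generation of $A_+$ per se, which is the standing hypothesis making that description available in the first place.
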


\begin{proof}
Note that for any $M\in\QGr A$ the object $i(M)\in\Gr^{\bb Z}A$ has no torsion. Indeed, any morphism
$t:T\to i(M)$ with $T\in\Tors^{\bb Z}A$ induces a zero morphism $T_{\geq 0}\to M$ in $\Gr A$,
because $T_{\geq 0}\subset T$ is torsion and $M$ is torsionfree.
Since $i(M)$ is zero in negative degrees, it follows that $t=0$ and $i(M)$ is torsionfree.
Therefore the $\Tors^{\bb Z}A$-envelope $\lambda:i(M)\to\iota(M)=Q\circ i(M)$ is a monomorphism
with $T':=\iota(M)/i(M)\in\Tors^{\bb Z}A$.

We claim that $T'_{\geq 0}=0$. Indeed, we have a short exact sequence in $\Gr A$
   $$0\to M\to\tau_{\geq 0}\iota(M)\to T'_{\geq 0}\to 0.$$
As $T'_{\geq 0}\in\Tors A$ and $M$ is $\Tors A$-closed, this sequence is split.
We see that $\iota(M)_{\geq 0}=i(M)\oplus T'_{\geq 0}$. But $\iota(M)_{\geq 0}$ is a submodule of the torsionfree
module $\iota(M)$, hence $T'_{\geq 0}=0$ as claimed. The claim also implies $\iota(M)_{\geq 0}=i(M)$,
and so $I\circ\iota=\id$. In particular, $\iota$ is a faithful functor.

To show $\iota$ is full, consider $M,N\in\QGr A$ and a morphism $f:\iota(M)\to\iota(N)$ in $\Gr^{\bb Z} A$. 
We have a commutative diagram in with exact rows
   $$\xymatrix{0\ar[r]&i(M)\ar[d]_{i(f_{\geq 0})}\ar[r]^{\lambda_M}&\iota(M)\ar[d]_f\ar[r]&T'\ar[r]\ar[d]&0\\
                       0\ar[r]&i(N)\ar[r]^{\lambda_N}&\iota(N)\ar[r]&S'\ar[r]&0}$$
As above $T',S'\in\Tors^{\bb Z}A$ and $T'_{\geq 0}=S'_{\geq 0}=0$. 
It follows that $\iota(f_{\geq 0})=Q\circ i(f_{\geq 0})=f$, and therefore $\iota$ is full.

Next, we prove that $\iota$ is essentially surjective. Every $M\in\Gr^{\bb Z}A$ fits in
a short exact sequence
   $$0\to M_{\geq 0}\to M\to M/M_{\geq 0}\to 0$$
As $M/M_{\geq 0}$ is bounded above, it is torsion. We see that
$Q(M_{\geq 0})\to Q(M)$ becomes an isomorphism
in $\QGr^{\bb Z}A$. The functor $Q\circ i$ takes $\lambda:M_{\geq 0}\to q(M_{\geq 0})$
to an isomorphism in $\QGr^{\bb Z} A$, because $\kr\lambda,\coker\lambda\in\Tors A$. If $M\in\QGr^{\bb Z} A$ then $q(M_{\geq 0})=I(M)$ 
and we have a zigzag of isomorphisms in $\QGr^{\bb Z} A$
    $$M=Q(M)\xleftarrow{\cong}Qi(M_{\geq 0})\xrightarrow{Qi(\lambda)} Qi(I(M))=\iota\circ I(M),$$
which is functorial in $M$. We see that $\iota$ is essentially surjective and $I$ is quasi-inverse to $\iota$. This completes the proof.
\end{proof}

Due to the preceding theorem we do not distinguish $\QGr^{\bb Z} A$ and $\QGr A$. From now on we 
will work with the category $\QGr A$. The construction of $\QGr A$ shares lots of common properties with
stable homotopy theory of spectra. To show this, we need some preparations. We introduce some terminology which
is inherited from stable homotopy theory, so that an expert in this field will be able to see what happens in the algebraic 
context around $\QGr A$.

Let $\ff G$ be the $t$-filter associated to $\Tors A$. By construction, powers of $A_+$ form a basis of $\ff G$.
Thus we have the following statement that follows from Proposition~\ref{eee}(2).

\begin{lemma}\label{fintype}
$\Tors A$ is a torsion theory of finite type and $\QGr A$ is a locally finitely generated
Grothendieck category.
\end{lemma}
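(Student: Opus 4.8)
The plan is to establish Lemma~\ref{fintype} in two halves, both of which follow quickly from the machinery already assembled. First, to see that $\Tors A$ is a torsion theory of finite type, I would invoke Proposition~\ref{eee}(2) directly: the family $\ff B=\{A_+\}$ consists of a single homogeneous two-sided ideal, which is finitely generated as a right ideal by the standing hypothesis that $A_+$ is finitely generated. Hence the set $\ff B'$ of finite products $\{A_+^n\}_{n\geq 1}$ is a basis for a $t$-filter of finite type, and by Lemma~\ref{Gabr} the corresponding sh-torsion class --- which is precisely $\Tors A$ by the definition given in Section~\ref{dubois} --- is of finite type. A torsion class of finite type is exactly what is meant by a torsion theory of finite type, so the first assertion is immediate.

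For the second half, I would recall the general fact that if $\cc C$ is a locally finitely presented (equivalently here, locally finitely generated) Grothendieck category and $\cc S\subseteq\cc C$ is a torsion class of finite type, then the quotient category $\cc C/\cc S$ is again locally finitely generated. Here $\cc C=\Gr A$ is locally finitely presented (noted explicitly in Section~\ref{prelim}), with finitely generated projective generators $\{A(-n)\}_{n\in\bb N}$, and $\cc S=\Tors A$ is of finite type by the first half. The point is that the quotient functor $q:\Gr A\to\QGr A$ sends a generating set of finitely generated objects to a generating set of finitely generated objects in $\QGr A$: the images $q(A(-n))$ generate because $q$ is exact and essentially surjective, and each $q(A(-n))$ is finitely generated in $\QGr A$ because the right adjoint (section functor) of $q$ commutes with the filtered colimits that compute "finitely generated", which is exactly where finite type of $\Tors A$ is used. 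One can also read this off from the standard description of $\Hom_{\QGr A}(q(A(-n)),-)$ as a filtered colimit of $\Hom$-groups over the basis of $\ff G$, which by finite type may be taken over finitely generated ideals.

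The only genuine subtlety --- and the step I would flag as the main obstacle --- is verifying that each $q(A(-n))$ really is finitely generated in $\QGr A$, equivalently that $\Hom_{\QGr A}(q(A(-n)),-)$ preserves filtered colimits. This is where the finite-type hypothesis on $\Tors A$ is essential and cannot be bypassed: without it the section functor need not preserve directed colimits and the quotient can fail to be locally finitely generated. I would handle this by citing the appropriate general result on localization of locally finitely generated Grothendieck categories at torsion classes of finite type (for instance the relevant statement in the Appendix, or a standard reference such as~\cite{GP2}), applied to $\ff G$ with its basis $\{A_+^n\}_{n\geq 1}$. Everything else is formal: exactness of $q$, essential surjectivity, and the fact that $\{A(-n)\}_{n\in\bb N}$ generates $\Gr A$ together imply that $\{q(A(-n))\}_{n\in\bb N}$ generates $\QGr A$, completing the proof.
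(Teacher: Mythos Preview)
Your proposal is correct and follows essentially the same approach as the paper, which gives no formal proof beyond the sentence preceding the lemma stating that it ``follows from Proposition~\ref{eee}(2)''. Your first half is exactly this, and your second half spells out the standard passage from a finite-type torsion class to a locally finitely generated quotient --- a step the paper leaves entirely implicit.
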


It is also useful to have the following fact.

\begin{lemma}\label{tfilterG}
The following statements are true:

$(1)$ The $t$-filter $\ff G$ equals the smallest $t$-filter containing ideals $A_{\geq n}$, $n\geq 0$. In particular,
$\Tors A$ is the smallest sh-torsion class of $\Gr A$ containing $A/A_{\geq n}$, $n\geq 0$.

$(2)$ The $t$-filter $\ff G$ equals the smallest $t$-filter containing $A_{+}$.
In particular, $\Tors A$ is the smallest sh-torsion class of $\Gr A$ containing $A/A_{+}$.
\end{lemma}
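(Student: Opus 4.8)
The plan is to prove both statements by exhibiting mutual containments among the relevant $t$-filters, using the correspondence of Lemma~\ref{Gabr} together with the axioms $T1$--$T3$. Throughout, recall that $\ff G$ is the $t$-filter associated to $\Tors A$, that powers $A_+^n$ ($n\geq 1$) form a basis of $\ff G$, and that $A_+^n\subset A_{\geq n}$, so each $A_{\geq n}$ already lies in $\ff G$; also $A=A_{\geq 0}\in\ff G$ trivially. Hence $\ff G$ contains the ideals $A_{\geq n}$ for all $n\geq 0$, and in particular it contains $A_+=A_{\geq 1}$. It remains to show the reverse inclusions, i.e.\ that any $t$-filter $\ff F$ containing all $A_{\geq n}$ (resp.\ containing $A_+$) must contain every power $A_+^n$, since these generate $\ff G$.

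For part $(1)$: let $\ff F$ be a $t$-filter with $A_{\geq n}\in\ff F$ for all $n\geq 0$. Fix $n\geq 1$; I must show $A_+^n\in\ff F$. The natural candidate is to apply $T3$ with $\ff b=A_+^n$ and $\ff a=A_{\geq m}$ for a suitably large $m$ depending on $n$ and on a finite generating set of $A_+$. Concretely, if $A_+$ is generated as a right ideal by homogeneous elements $x_1,\dots,x_r$ of degrees $d_1,\dots,d_r\geq 1$, then $A_+^n$ is generated by products $x_{i_1}\cdots x_{i_n}$, whose degrees lie between $n\cdot\min_j d_j$ and $n\cdot\max_j d_j$ plus contributions from $A_0$; the point is that for $m$ large enough, every homogeneous element $a\in A_{\geq m}$ of degree $\geq m$ can be written (after multiplying) so that $(A_+^n : a)$ contains some $A_{\geq m'}\in\ff F$ — indeed for a homogeneous $a$ of large degree one shows $A_{\geq m'}\cdot a\subseteq A_+^n$ once $\deg a$ exceeds a bound, because high-degree elements of $A$ are themselves built from the generators $x_i$. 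Then $T3$ gives $A_+^n\in\ff F$. The same argument, starting from $A_+\in\ff F$ and using $T2$/products, yields part $(2)$, since from $A_+\in\ff F$ one obtains $A_+^n\in\ff F$ by iterating: $A_+^n\in\ff F$ follows from $A_+^{n-1}\in\ff F$ via $T3$ with $\ff a=A_+$, noting $(A_+^n:a)\supseteq A_+^{n-1}$ for every homogeneous $a\in A_+$ — indeed $a\in A_+$ implies $a\cdot A_+^{n-1}\subseteq A_+^n$. Thus part $(2)$ is in fact the cleaner of the two and reduces to a one-line induction using Proposition~\ref{eee}(1) (which already tells us products of ideals in $\ff F$ stay in $\ff F$, so $A_+\in\ff F\Rightarrow A_+^n\in\ff F$ immediately).

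Given that observation, part $(2)$ is essentially immediate: the smallest $t$-filter containing $A_+$ contains all $A_+^n$ by Proposition~\ref{eee}(1) applied to the two-sided ideal $A_+$, hence contains $\ff G$; conversely $A_+\in\ff G$; so they coincide, and the statement about $\Tors A$ follows by transporting along the bijection of Lemma~\ref{Gabr}. For part $(1)$ the containment $\ff G\subseteq(\text{smallest }t\text{-filter on the }A_{\geq n})$ still needs the argument that $A_{\geq n}\in\ff F$ for all $n$ forces $A_+\in\ff F$ (then quote part $(2)$): here one uses that $A_+$ and $A_{\geq 1}$ coincide — literally $A_+=A_{\geq 1}$ by the definition in Section~\ref{dubois} — so this direction of $(1)$ is also automatic, and the content of $(1)$ is simply that $A_{\geq n}\in\ff G$ for all $n$, which we already noted from $A_+^n\subseteq A_{\geq n}$. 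So the proof is short once these inclusions are lined up.

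The main obstacle, such as it is, is purely bookkeeping: making sure the finite-generation hypothesis on $A_+$ (standing assumption of Section~\ref{dubois}) is used correctly so that "powers of $A_+$ form a basis of $\ff G$" is legitimate — this is exactly Proposition~\ref{eee}(2) applied to $\ff B=\{A_+\}$ — and being careful that the ideals $A_{\geq n}$ are genuinely graded right ideals belonging to $\ff G$, which follows since $A_+^n\subseteq A_{\geq n}$ and $\ff G$ is closed upward under inclusion of homogeneous right ideals (a $t$-filter property). I anticipate no real difficulty; the proof is a direct verification chaining $A_+^n\subseteq A_{\geq n}$, $A_+=A_{\geq 1}$, Proposition~\ref{eee}(1), and the Lemma~\ref{Gabr} correspondence.
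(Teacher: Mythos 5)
Your final argument (the third paragraph) is exactly the paper's proof: observe $A_+=A_{\geq 1}$ lies in the candidate smallest $t$-filter, invoke Proposition~\ref{eee}(1) to get all $A_+^n$ in it (hence all of $\ff G$, since the $A_+^n$ are a basis), and note the reverse containment from $A_+^n\subseteq A_{\geq n}$. The preliminary $T3$/degree-bound attempt in your middle paragraph is unnecessary and would require more care, but since you abandon it in favor of the Proposition~\ref{eee}(1) argument, the proposal as a whole is correct and matches the paper's proof.
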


\begin{proof}
(1). Denote by $\ff F$ the smallest $t$-filter containing ideals $A_{\geq n}$, $n\geq 0$. Then $A_+=A_{\geq 1}\in\ff F$,
and hence $A_+^n\in\ff F$ by Proposition~\ref{eee}(1). As the family $\{A_+^n\mid n\geq 0\}$ is a basis for $\ff G$,
it follows that $\ff G\subset\ff F$. On the other hand, each $A_{\geq n}$ is in $\ff G$, and hence $\ff F\subset\ff G$
due to the assumption that $\ff F$ is the smallest $t$-filter containing the ideals $A_{\geq n}$.

(2). The proof literally repeats that of (1).
\end{proof}

\begin{proposition}\label{qqq}
Let $\cc T$ be the full subcategory of $\Gr A$ consisting of the following modules $M$:
for every element $m\in M$ there is $n\in\bb N$ such that $mA_{\geq n}=0$. Then $\cc T$ is an
sh-torsion theory and it coincides with $\Tors A$. 
\end{proposition}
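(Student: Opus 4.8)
The plan is to prove the two inclusions $\cc T\subseteq\Tors A$ and $\Tors A\subseteq\cc T$ separately, after first checking that $\cc T$ is indeed an sh-torsion class. For the latter, one verifies directly that $\cc T$ is closed under quotients, coproducts, extensions, and (since it is to be a torsion class in a locally finitely presented category, it suffices to check it is closed under direct limits together with the hereditary-free torsion axioms) under submodules is \emph{not} required, but closure under quotients, extensions and arbitrary coproducts is, and each of these is a routine pointwise check using the defining condition ``$mA_{\geq n}=0$ for some $n$''. The shift-stability is immediate: if $M\in\cc T$ and $m\in M(-j)_i = M_{i-j}$, the same $n$ (adjusted by $j$) annihilates it, so $M(-j)\in\cc T$. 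Thus $\cc T$ is an sh-torsion class.

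Next I would show $\cc T\subseteq\Tors A$. Given $M\in\cc T$, it is a direct limit (equivalently, a sum) of its finitely generated submodules, and each finitely generated submodule $N$ satisfies $NA_{\geq n}=0$ for a single $n$ (take the maximum of the finitely many $n$'s needed for a generating set), i.e. $N$ is a quotient of a finite sum of shifts of $A/A_{\geq n}$. By Lemma~\ref{tfilterG}(1), $A/A_{\geq n}\in\Tors A$, and $\Tors A$ is closed under shifts, quotients and finite sums, so $N\in\Tors A$; since torsion classes are closed under direct limits (the torsion theory is of finite type by Lemma~\ref{fintype}, but even without that, $\Tors A$ being a torsion class in $\Gr A$ is closed under arbitrary coproducts and quotients, hence under filtered colimits of subobjects) we get $M\in\Tors A$.

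For the reverse inclusion $\Tors A\subseteq\cc T$, I would invoke the minimality clause of Lemma~\ref{tfilterG}(1): $\Tors A$ is the \emph{smallest} sh-torsion class containing $A/A_{\geq n}$ for all $n\geq 0$. Since we have already shown $\cc T$ is an sh-torsion class, and clearly $A/A_{\geq n}\in\cc T$ (every element is killed by $A_{\geq n}$ itself), minimality forces $\Tors A\subseteq\cc T$. Combined with the previous paragraph this gives $\cc T=\Tors A$.

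The main obstacle I anticipate is the verification that $\cc T$ is a genuine torsion class, specifically closure under the relevant operations when one does \emph{not} assume $A$ is commutative — the condition ``$mA_{\geq n}=0$'' involves a right-module annihilator, and for extensions $0\to M'\to M\to M''\to 0$ one must check that if $\bar m\in M''$ is killed by $A_{\geq n}$ and the preimage $m$ has $mA_{\geq n}\subseteq M'$ with each generator's image killed by $A_{\geq n'}$, then $m$ itself is killed by $A_{\geq n+n'}$ (using $A_{\geq n}A_{\geq n'}\subseteq A_{\geq n+n'}$). This is where the multiplicative structure of the tails $A_{\geq n}$ is used, and it is the one place where a careless pointwise argument could go wrong; everything else reduces cleanly to Lemma~\ref{tfilterG}.
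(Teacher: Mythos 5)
Your overall structure matches the paper's: prove $\cc T$ is an sh-torsion class, get $\Tors A\subseteq\cc T$ from the minimality clause of Lemma~\ref{tfilterG}(1), and get $\cc T\subseteq\Tors A$ by a separate argument. Your route for $\cc T\subseteq\Tors A$ (finitely generated submodules are quotients of finite sums of shifted $A/A_{\geq n}$) is valid, though slightly longer than the paper's one-liner $A_+^n\subseteq A_{\geq n}$.

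The genuine gap is in your extension-closure argument, which is also precisely where the paper expends all of its effort. You conclude ``$m$ itself is killed by $A_{\geq n+n'}$ (using $A_{\geq n}A_{\geq n'}\subseteq A_{\geq n+n'}$)'', but this inclusion points the wrong way for what you need. What your set-up delivers (once one supplies the missing finite-generation observation) is $m\cdot A_{\geq n}A_{\geq n'}=0$. Since $A_{\geq n}A_{\geq n'}$ is in general a \emph{proper} subideal of $A_{\geq n+n'}$ — for instance, with $A$ freely generated over $A_0$ by $b_1,b_2$ of degrees $2,3$, one has $b_1^3\in A_{\geq 6}$ but $b_1^3\notin A_{\geq 3}A_{\geq 3}$ — you cannot infer $mA_{\geq n+n'}=0$ from it. You also glide over the need for $A_{\geq n}$ (and not merely $A_+$) to be finitely generated as a right ideal, so that ``take the maximum over the generators'' is legitimate. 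The paper's proof confronts this head-on by fixing generators $b_1,\ldots,b_s$ of $A_+$ with $N=\max\deg b_i$ and working with the finitely many monomials spanning $A_{Nk}$; the real content is producing a single tail $A_{\geq\ell}$ that annihilates $m$, not just the product ideal.

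For what it's worth, both inclusions can be proven without ever establishing extension closure for $\cc T$ directly. Set $N=\max\deg b_i$. One checks $A_+^n\subseteq A_{\geq n}$ (clear) and $A_{\geq Nn}\subseteq A_+^n$ (by peeling off generators: any homogeneous $y$ of degree $\geq Nn$ is $\sum b_ia_i$ with $\deg a_i\geq N(n-1)$, so induct). Then $mA_{\geq n}=0$ implies $mA_+^n=0$, giving $\cc T\subseteq\Tors A$, and $mA_+^n=0$ implies $mA_{\geq Nn}=0$, giving $\Tors A\subseteq\cc T$. Since $\Tors A$ is already known to be an sh-torsion class, so is $\cc T=\Tors A$, and the extension check becomes a consequence rather than a prerequisite. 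Either way, your proposal as written does not close the extension step, and the minimality argument for $\Tors A\subseteq\cc T$ rests on it.
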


\begin{proof}
Clearly, $\cc T$ is closed under subobjects, quotient objects, direct sums and shifts.
We claim that $\cc T$ is closed under extensions.
Consider a short exact sequence
   $$0\to K\lra{i} M\lra{f} L\to 0$$
with $K,L\in\cc T$. By assumption, $A_+$ is finitely generated by $b_1,\ldots,b_n$ as a right ideal.
Set, 
   $$N:=\max\{\deg(b_1),\ldots,\deg(b_n)\}.$$
Then $A_N$ is generated as an $A_0$-module by finitely many monomials of degree $N$, each of which 
is a word in letters $b_1,\ldots,b_n$. By degree considerations, $A_{Nm}=A_N^m$ for every $m>0$.

Let $m\in M$, then $f(m)A_{\geq Nk}=0$ for some $k$. Let $u_1,\ldots,u_s$ be the set of all monomials (i.e words)
of degree $Nk$ in variables $b_1,\ldots,b_n$. They generate $A_{Nk}$ as an $A_0$-module. 
Then $f(m)u_j=0$ for all $1\leq j\leq s$. It follows that each $mu_j=k_j\in K$. In turn, 
each $k_j$ is annihilated by $A_{\geq Nq_j}$. It particular, monomials
in $b_1,\ldots,b_n$ of degree $Nq_j$ annihilate $k_j$. Set,
   $$q=\max\{\deg(q_1),\ldots,\deg(q)_s\}.$$
We conclude that $m$ is annihilated by monomials in $b_1,\ldots,b_n$ of degree $N(k+q)$,
and therefore $m$ is annihilated by $A_{N(k+q)}$.

Suppose $y\in A_{\geq N(k+q)}$;
then $y=z_1a_1+\cdots+z_ta_t$ with $z_1,\ldots,z_t\in A_{N(k+q)}$. Since $mz_i=0$ for all $i\leq t$,
it follows that $my=0$. We see that $m$ is annihilated by $A_{\geq N(k+q)}$. Thus $M\in\cc T$
and $\cc T$ is closed under extensions as claimed.

Suppose $M\in\cc T$. Every $m\in M$ is annihilated by $A_{\geq n}$ for some $n$. Since $A_+^n\subset A_{\geq n}$,
the element $m$ is annihilated by $A_{+}^n$. We see that $\cc T\subset\Tors A$. On the other hand,
$A/A_{\geq n}\in\cc T$ for all $n\in\bb N$, because every element
$x+A_{\geq n}$ of $A/A_{\geq n}$ is annihilated by 
$A_{\geq n}$. Since $\Tors A$ is the smallest sh-torsion theory containing
$A/A_{\geq n}$ by Lemma~\ref{tfilterG}(1), it follows that $\cc T\supset\Tors A$.
\end{proof}

The proof of the preceding proposition implies the following statement.

\begin{corollary}\label{qqqcor}
Suppose $A_{\geq 1}$ is generated by $b_1,\ldots,b_n$ and $N:=\max\{\deg(b_1),\ldots,\deg(b_n)\}$.
Then $\Tors A$ coincides with the full subcategory of $\Gr A$ consisting of the following modules $M$:
for every element $m\in M$ there is $n\in\bb N$ such that $mA_{\geq Nn}=0$.
\end{corollary}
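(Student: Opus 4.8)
The plan is to reduce this directly to Proposition~\ref{qqq}. By that proposition, $\Tors A=\cc T$, the full subcategory of $\Gr A$ whose objects are the modules $M$ such that every $m\in M$ is annihilated by some $A_{\geq j}$. Write $\cc T'$ for the subcategory described in the present statement, i.e.\ those $M$ such that every $m\in M$ is annihilated by $A_{\geq Nn}$ for some $n\in\bb N$. It therefore suffices to show $\cc T'=\cc T$ as full subcategories of $\Gr A$.

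Both inclusions are formal. The inclusion $\cc T'\subseteq\cc T$ is immediate, since the ideals $A_{\geq Nn}$ are among the $A_{\geq j}$. For $\cc T\subseteq\cc T'$, take $M\in\cc T$ and $m\in M$, say $mA_{\geq j}=0$. Since $N\geq 1$, choosing $n=j$ gives $Nn=Nj\geq j$, so $A_{\geq Nn}\subseteq A_{\geq j}$ and hence $mA_{\geq Nn}=0$; thus $m$ satisfies the condition defining $\cc T'$, and $M\in\cc T'$. Combining, $\cc T'=\cc T=\Tors A$.

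Alternatively — and this is presumably what the phrase ``the proof of the preceding proposition implies'' alludes to — one can run the argument of Proposition~\ref{qqq} itself with $\cc T'$ in place of $\cc T$: closure under subobjects, quotients, direct sums and shifts is clear, and in the extension-closure step every truncation ideal produced by the proof already sits at a multiple of $N$ (the displayed conclusions ``$m$ is annihilated by $A_{N(k+q)}$'', hence by $A_{\geq N(k+q)}$, are exactly of this shape), so the same computation shows $\cc T'$ is closed under extensions. Then $\cc T'\subseteq\Tors A$ because $A_+^{Nn}\subseteq A_{\geq Nn}$, while $A/A_{\geq n}\in\cc T'$ for every $n$ since $A_{\geq Nn}\subseteq A_{\geq n}$, so Lemma~\ref{tfilterG}(1) gives $\Tors A\subseteq\cc T'$ and equality again. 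There is no genuine obstacle: the statement is merely a reformulation of Proposition~\ref{qqq}, resting on the trivial fact that for an element, being annihilated by some truncation $A_{\geq j}$ is the same as being annihilated by some truncation at a multiple of $N$.
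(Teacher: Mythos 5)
Your argument is correct, and if anything cleaner than what the paper gestures at: the corollary follows already from the \emph{statement} of Proposition~\ref{qqq}, not just its proof, since the ideals $A_{\geq Nn}$ are cofinal among the ideals $A_{\geq j}$ once one notes $N\geq 1$ (the homogeneous generators of $A_{\geq 1}$ necessarily have positive degree, the degenerate case $A_{\geq 1}=0$ being vacuous). The paper gives no separate proof, only the remark that the preceding proof implies the statement, so your first reduction is exactly the intended content and your second paragraph merely confirms it.
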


If we regard injective objects as a kind of fibrant objects in homotopy theory,
we say that an injective graded module $E\in\Gr A$ is
an {\it $\Omega$-module\/} if $\Gr A(-,E)$ takes each monomorphism in the family
(consisting of the shifts of the inclusion $A_+\hookrightarrow A$)
   \begin{equation}\label{Lambda}
    \Lambda=\{\lambda_n:A_+(-n)\to A(-n)\mid n\geq 0\},
   \end{equation}
to an isomorphism. If there is no likelihood of confusion,
we will sometimes call morphisms of graded $A$-modules becoming isomorphisms in $\QGr A$
{\it stable equivalences}. The {\it $m$-th shift $A$-module $M[m]$\/} of $M\in\Gr A$ is defined as $M[m]_n=M_{m+n}$.

\begin{lemma}\label{horosho}
$\Tors A$ coincides with the full subcategory which is left orthogonal to every 
$\Omega$-module. The injective objects of $\QGr A$ are the $\Omega$-modules. In particular,
a morphism $f:M\to N$ in $\Gr A$ is a stable equivalence if and only if $f^*:\Gr A(N,E)\to\Gr A(M,E)$
is an isomorphism for every $\Omega$-module $E$.
\end{lemma}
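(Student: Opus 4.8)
The plan is to unwind the definition of $\Tors A$ via the associated $t$-filter $\ff G$ and the localization theory of Grothendieck categories, using the fact (Lemma~\ref{fintype}) that $\Tors A$ is of finite type. Recall that for a torsion theory of finite type, an injective object $E$ is $\Tors A$-closed (equivalently, is a $\Tors A$-local object) if and only if $\Gr A(-,E)$ kills every morphism with torsion kernel and cokernel; and since $\ff G$ has the powers $A_+^n$, or equivalently the tails $A_{\geq n}$, as a basis (Lemma~\ref{tfilterG}), this amounts to asking that $\Gr A(A(-m),E)\to\Gr A(A_{\geq n}(-m),E)$ be an isomorphism for all $n\geq 0$ and all shifts. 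So the first step is to show that being an $\Omega$-module—i.e. inverting only the family $\Lambda$ of inclusions $A_+(-n)\hookrightarrow A(-n)$—already forces $E$ to invert all the inclusions $A_{\geq n}(-m)\hookrightarrow A(-m)$.

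For that step I would argue by induction on $n$, using the factorization $A_{\geq n+1}\hookrightarrow A_{\geq n}\hookrightarrow A$ together with the identity $A_{\geq 1}=A_+$ after an appropriate shift: more precisely, $A_{\geq n}$ is built out of shifted copies of $A_+$ in the sense that the quotients $A_{\geq n}/A_{\geq n+1}\cong A_n$ are finitely generated $A_0$-modules concentrated in a single degree, hence torsion, and one can realize each $A_{\geq n}(-m)\hookrightarrow A_{\geq n-1}(-m)$ as obtained from members of $\Lambda$ by shifting and by passing to subobjects and quotients that lie in $\Tors A$. Since $\Gr A(-,E)$ is left exact and $E$ is injective, applying $\Hom(-,E)$ to the relevant short exact sequences and using that $E$ inverts $\Lambda$ propagates the isomorphism up the chain. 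Concretely: $\Gr A(A/A_{\geq n},E)=0$ for all $n$ because $A/A_{\geq n}$ has a finite filtration whose subquotients are shifts of $A_0$-modules in bounded degrees, and each such piece is hit by members of $\Lambda$; the detailed bookkeeping here is the main obstacle, since one must check the filtration subquotients really are accessible from $\Lambda$ alone and not just from the larger family $\{A_{\geq n}\hookrightarrow A\}$.

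Granting that, the three assertions follow formally. First, a graded module $T$ is left orthogonal to every $\Omega$-module iff $\Gr A(T,E)=0$ for every injective $\Tors A$-closed $E$ (by the previous paragraph the $\Omega$-modules are exactly these), and by standard localization theory this holds iff $T\in\Tors A$; this gives the first sentence. Second, the injective objects of $\QGr A=\Gr A/\Tors A$ are, via the section functor, exactly the injective $\Tors A$-closed objects of $\Gr A$, which we have identified with the $\Omega$-modules; this gives the second sentence. Third, $f:M\to N$ is a stable equivalence, i.e. $Q(f)$ is an isomorphism, iff $Q(f)^*$ is an isomorphism on $\QGr A(Q(N),Q(E'))\to\QGr A(Q(M),Q(E'))$ for every injective $E'$ of $\QGr A$ (a map in a Grothendieck category is iso iff it is so after $\Hom(-,E')$ for all injectives $E'$), and by adjunction $\QGr A(Q(-),Q(E'))=\Gr A(-,E')$ when $E'$ is $\Tors A$-closed; running over all $\Omega$-modules $E$ then gives the stated criterion.

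I would also remark that the only place commutativity or the $\mathbb Q$-algebra hypothesis could enter is absent here: the argument uses only that $A_+$ is finitely generated (so $\Tors A$ is of finite type, Lemma~\ref{fintype}) and the description of $\ff G$ from Lemma~\ref{tfilterG}, both already in force. The expected difficulty, to restate, is purely the combinatorial verification that the single family $\Lambda$ generates—under shifts, subobjects, quotients and extensions inside $\Gr A$—enough morphisms to detect all of $\Tors A$; everything after that is the formal dictionary between finite-type localization, closed objects, and injectives in the quotient category.
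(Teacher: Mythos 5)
Your proposal is correct but takes a genuinely different, and longer, route than the paper. The paper's argument is indirect and leans on minimality: it introduces $\cc S=\{M\in\Gr A:\Gr A(M,E)=0\text{ for all injective }\Omega\text{-modules }E\}$, observes that $\cc S$ is an sh-torsion class (shifts of injective $\Omega$-modules are again injective $\Omega$-modules, and the left orthogonal to a family of injectives is always localizing), checks that $A/A_+\in\cc S$ directly from the $\Omega$-module condition together with injectivity, and then invokes Lemma~\ref{tfilterG}(2) --- $\Tors A$ is the \emph{smallest} sh-torsion class containing $A/A_+$ --- to obtain $\Tors A\subset\cc S$; the reverse inclusion $\cc S\subset\Tors A$ is immediate because every injective torsionfree module is trivially an $\Omega$-module. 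You instead aim to show directly that $\Omega$-modules are exactly the injective $\Tors A$-closed modules by proving that inverting $\Lambda$ already forces the inversion of every $A_{\geq n}(-m)\hookrightarrow A(-m)$; this is, in substance, the implication $(5)\Rightarrow(1)$ of Proposition~\ref{vesma} specialized to injectives, a result the paper establishes \emph{after} Lemma~\ref{horosho}. The ``bookkeeping'' you flag as the main obstacle in fact goes through: since $E$ is injective, $\Gr A(-,E)$ is exact, each subquotient $A_j\cong A_{\geq j}/A_{\geq j+1}$ of the finite filtration of $A/A_{\geq n}$ is annihilated by $A_+$ and hence is a quotient of a direct sum of shifted copies of $A/A_+$, and the vanishing $\Gr A(A/A_+(-j),E)=0$ (which is exactly what the $\Omega$-module condition plus injectivity gives) propagates up the filtration by exactness. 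So both routes are sound; the paper's is shorter because it exploits the minimality characterization of $\Tors A$ as an sh-torsion class rather than re-deriving a closedness criterion that is recorded separately later, while yours is more self-contained and makes the role of $\Lambda$ completely explicit.
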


\begin{proof}
Consider a torsion theory 
   $$\cc S=\{M\in\Gr A\mid\Gr A(M,E)=0\textrm{ for every injective $\Omega$-module $E$}\}.$$
Since for every injective module $E\in\Gr A$ and $n\geq 0$ the $n$th shift $E[n]$ is injective, $\cc S$ 
must be sh-torsion.
As $(A/A_+)(-n)\in\cc S$, Lemma~\ref{tfilterG}(2) implies $\Tors A\subset\cc S$. 
On the other hand, every injective torsionfree module is an $\Omega$-module, and hence $\cc S\subset\Tors A$.
We also see that the injective objects of $\QGr A$ coincide with the injective $\Omega$-modules. 
\end{proof}

We shall write $\uhom_A(L,M)$ and 
$\underline{\Ext}_A^n(L,M)$ for $\bb N$-graded Abelian groups 
$\bigoplus_{m\geq 0}\Gr A(L,M[m])$ and $\bigoplus_{m\geq 0}\Ext^n_{\Gr A}(L,M[m])$.

It is also useful to have the following statement.

\begin{proposition}\label{vesma}
The following properties are equivalent:

\begin{enumerate}
\item $M\in\Gr A$ is $\Tors A$-closed;
\item $\uhom_A(A/A_{\geq n},M)=\underline{\Ext}_A^1(A/A_{\geq n},M)=0$ for all $n\geq 1$;
\item $\uhom_A(A/A_{+},M)=\underline{\Ext}_A^1(A/A_{+},M)=0$;
\item the functor $\uhom_A(-,M)$ takes $A_{\geq n}\to A$, $n\geq 1$, to an isomorphism;
\item the functor $\uhom_A(-,M)$ takes $A_{+}\to A$ to an isomorphism.
\end{enumerate}
\end{proposition}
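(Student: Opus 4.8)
The plan is to prove the chain of equivalences (1) $\Rightarrow$ (4) $\Leftrightarrow$ (2) and (1) $\Rightarrow$ (5) $\Leftrightarrow$ (3), together with (4) $\Rightarrow$ (1), exploiting the short exact sequences $0\to A_{\geq n}\to A\to A/A_{\geq n}\to 0$ and $0\to A_+\to A\to A/A_+\to 0$ and the long exact sequence for $\uhom_A(-,M)=\bigoplus_{m\geq 0}\Gr A(-,M[m])$. The equivalences (2) $\Leftrightarrow$ (4) and (3) $\Leftrightarrow$ (5) are purely formal: applying $\uhom_A(-,M)$ to the displayed short exact sequence gives a six-term exact sequence, and the vanishing of $\uhom_A(A/A_{\geq n},M)$ and $\underline{\Ext}^1_A(A/A_{\geq n},M)$ is equivalent to $\uhom_A(A\to A_{\geq n},M)$ being both injective and surjective, i.e.\ an isomorphism (here one uses $\underline{\Ext}^0_A(A,M)=\uhom_A(A,M)$ and the fact that $\underline{\Ext}^1_A(A,M)=0$ since $A$ is projective, so that the connecting map identifies $\underline{\Ext}^1_A(A/A_{\geq n},M)$ with the cokernel of $\uhom_A(A,M)\to\uhom_A(A_{\geq n},M)$ and the kernel of that map with $\uhom_A(A/A_{\geq n},M)$). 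The same argument word for word handles $A_+$ in place of $A_{\geq n}$.

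Next I would establish (1) $\Leftrightarrow$ (2) and (1) $\Leftrightarrow$ (3). By Gabriel's theory of localization, $M$ is $\Tors A$-closed if and only if $\Gr A(S,M)=\Ext^1_{\Gr A}(S,M)=0$ for every $S$ in a generating set of the torsion class $\Tors A$ (more precisely, for $S$ running through $A/\ff a$ for $\ff a$ in a basis of the $t$-filter $\ff G$). Since $\Tors A$ is sh-torsion, closedness is equivalent to $\Gr A(S[m],M)=\Ext^1_{\Gr A}(S[m],M)=0$ for all $m\geq 0$, i.e.\ to the vanishing of the graded groups $\uhom_A(S,M)$ and $\underline{\Ext}^1_A(S,M)$; note $S[m]$ runs over the same kind of modules up to shift, so this is the natural graded reformulation. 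By Lemma~\ref{tfilterG}(1), $\{A/A_{\geq n}\mid n\geq 0\}$ generates $\Tors A$ as an sh-torsion class, giving (1) $\Leftrightarrow$ (2); by Lemma~\ref{tfilterG}(2), $A/A_+$ alone generates $\Tors A$ as an sh-torsion class, giving (1) $\Leftrightarrow$ (3). Concretely, to go from the single module $A/A_+$ to all the $A/A_{\geq n}$ one filters $A/A_{\geq n}$ by subquotients isomorphic to shifts of $A_i/A_{i-1}\cdot A_1$-type pieces, each of which is a quotient of a direct sum of shifts of $A/A_+$, and uses that the class of modules left-orthogonal (for $\uhom$ and $\underline{\Ext}^1$) to a fixed module is closed under the operations defining an sh-torsion class restricted appropriately — this is exactly the content packaged in Lemma~\ref{tfilterG}.

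The main obstacle is the implication (4) $\Rightarrow$ (1) (equivalently (5) $\Rightarrow$ (1) and (2) $\Rightarrow$ (1), (3) $\Rightarrow$ (1)): knowing that $\uhom_A(-,M)$ sends the single map $A_+\to A$ (or all $A_{\geq n}\to A$) to an isomorphism must be leveraged into the statement that $M$ is closed with respect to the whole torsion theory, i.e.\ $\uhom_A(-,M)$ kills all of $\Tors A$ in degrees $0$ and $1$. The key is that by Corollary~\ref{qqqcor} every torsion module is, locally on elements, annihilated by some $A_{\geq Nn}$, so $\Tors A$ is generated under colimits and extensions by the modules $A/A_{\geq n}$; then one argues that the class $\cc S'=\{S\mid\uhom_A(S,M)=\underline{\Ext}^1_A(S,M)=0\}$ is closed under the sh-torsion-class operations (shifts are clear since $M[m]$ shifts are handled uniformly; subobjects, quotients, extensions, and coproducts follow from the long exact sequence and the fact that $\underline{\Ext}^1$ vanishes on a generating projective and $\Ext^2$ does not intervene because $\Tors A$ is of finite type, cf.\ Lemma~\ref{fintype}, allowing a reduction to finitely presented torsion modules where a dévissage by the $A/A_{\geq n}$ terminates). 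Hence $\cc S'\supseteq\Tors A$, which is (1). I expect the careful bookkeeping of why $\cc S'$ is closed under extensions — which needs the vanishing of $\underline{\Ext}^1_A(A,M)$ together with a diagram chase in the long exact sequence — and the dévissage reducing a general torsion module to the $A/A_{\geq n}$ to be the only genuinely nontrivial points; everything else is the formal Gabriel-localization dictionary specialized to the graded setting.
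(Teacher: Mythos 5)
Your reductions $(2)\Leftrightarrow(4)$ and $(3)\Leftrightarrow(5)$ via the six-term exact sequence and projectivity of $A$ are fine, and you correctly identify the two ingredients that power the hard direction (the concrete description of $\Tors A$ from Proposition~\ref{qqq}/Corollary~\ref{qqqcor}, and the fact that $A/A_{\geq n}$, or $A/A_+$ alone, generate the torsion class). The gap is in how you deploy them. You assert that $\cc S'=\{S\mid\uhom_A(S,M)=\underline{\Ext}^1_A(S,M)=0\}$ is closed under \emph{all} the sh-torsion-class operations, explicitly listing subobjects and quotients, and that ``$\Ext^2$ does not intervene because $\Tors A$ is of finite type.'' This is not correct. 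Closure under extensions and coproducts does follow from the long exact sequence, but closure under quotients fails: for $T\twoheadrightarrow T''$ with kernel $T_0$ the sequence gives $\uhom_A(T_0,M)\to\underline{\Ext}^1_A(T'',M)\to\underline{\Ext}^1_A(T,M)=0$, so $\underline{\Ext}^1_A(T'',M)$ is only a quotient of $\uhom_A(T_0,M)$, which you have no handle on. Closure under subobjects fails for the dual reason: $\underline{\Ext}^1_A(T',M)$ injects into $\underline{\Ext}^2_A(T/T',M)$, and finite type of $\Tors A$ says nothing about $\Ext^2$. Nor does a finite dévissage of a finitely presented torsion module by the $A/A_{\geq n}$ exist in the form you'd need, since the cyclic pieces are merely quotients of shifts of $A/A_{\geq n}$, and you are back to needing closure under quotients.

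The paper's argument is sharper and avoids this entirely: given $T\in\Tors A$, Proposition~\ref{qqq} supplies an epimorphism $P_0=\bigoplus_{x\in T}A/A_{\geq n_x}(-d_x)\twoheadrightarrow T$ whose kernel $T'$ is again torsion, hence again admits such an epimorphism $P_1\twoheadrightarrow T'$. Since $\underline{\Ext}^{\geq 0}_A(-,M)$ sends these coproducts to products, hypothesis $(2)$ gives $\uhom_A(P_i,M)=\underline{\Ext}^1_A(P_i,M)=0$. Applying the long exact sequence to $0\to T'\to P_0\to T\to 0$ then immediately forces $\uhom_A(T,M)=0$ and shows $\underline{\Ext}^1_A(T,M)$ is a quotient of $\uhom_A(T',M)$, which vanishes because $T'$ is a quotient of $P_1$ and $\uhom_A(-,M)$ is left exact. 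This two-step resolution is the engine you were reaching for; it replaces the false closure claim, and the same device (cover by $\bigoplus A/A_{\geq 1}(-n)$, note the kernel is covered likewise) is what makes the inductive step in $(3)\Rightarrow(2)$ go through. I'd recommend restructuring the hard direction around this explicit length-one resolution rather than around abstract closure properties of $\cc S'$.
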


\begin{proof}
Implications $(1)\Rightarrow(2)\Leftrightarrow(4)\Rightarrow(5)$ and 
$(1)\Rightarrow(2)\Rightarrow(3)\Leftrightarrow(5)$ 
are straightforward if we use the fact that
$A/A_{\geq n}\in\Tors A$ and $A$ is projective.

$(3)\Rightarrow(2)$. Suppose $\uhom_A(A/A_{\geq n},M)=\underline{\Ext}_A^1(A/A_{\geq n},M)=0$
for some $n$. By assumption, this is the case for $n=1$. One has a short exact sequence
   \begin{equation}\label{nado}
    0\to A_{\geq n}/A_{\geq n+1}\to A/A_{\geq n+1}\to A/A_{\geq n}\to 0.
   \end{equation}
As $A_{\geq n}/A_{\geq n+1}$ is covered by $\bigoplus_{x\in A_n}A/A_{\geq 1}(-n)$, we have 
a short exact sequence
    $$0\to\kr\alpha\to\oplus_{x\in A_n}A/A_{\geq 1}(-n)\xrightarrow{\alpha} A_{\geq n}/A_{\geq n+1}\to 0.$$
Since every element of $\bigoplus_{x\in A_n}A/A_{\geq 1}(-n)$ is annihilated by $A_{\geq 1}$, every element of
$\kr\alpha$ is annihilated by $A_{\geq 1}$ as well. It follows that $\kr\alpha$ is covered by
$\bigoplus_{x\in\kr\alpha}A/A_{\geq 1}(-n)$. Therefore,
   $$\uhom_A(\kr\alpha,M)=\uhom_A(A_{\geq n}/A_{\geq n+1},M)=
       \underline{\Ext}_A^1(A_{\geq n}/A_{\geq n+1},M)=0.$$
Applying $\underline{\Ext}_A^{0,1}(-,M)$ to~\eqref{nado}, our implication follows
by induction in $n$.

$(2)\Rightarrow(1)$. By Proposition~\ref{qqq} $T\in\Tors A$ if and only if every element of $T$ is annihilated
by $A_{\geq n}$ for some $n$. Therefore $T$ is covered by $\bigoplus_{x\in T}A/A_{\geq n_x}(-d_x)$,
where $d_x=\deg(x)$. Let $T':=\kr(\bigoplus_{x\in T}A/A_{\geq n_x}(-d_x)\twoheadrightarrow T)$, then 
$T'\in\Tors A$ and is covered by  $\bigoplus_{y\in T'}A/A_{\geq n_y}(-d_y)$.
The functor $\underline{\Ext}_A^{\geq 0}(-,M)$
commutes with coproducts. If we apply it to the short exact sequence
   $$T'\hookrightarrow\oplus_{x\in T}A/A_{\geq n_x}(-d_x)\twoheadrightarrow T,$$
we obtain $\uhom_A(T,M)=\underline{\Ext}_A^1(T,M)=0$
provided that $\uhom_A(A/A_{\geq n},M)=\underline{\Ext}_A^1(A/A_{\geq n},M)=0$.
Thus $M$ is $\Tors A$-closed.
\end{proof}

The proof oof the preceding proposition and Corollary~\ref{qqqcor} imply the following statement.

\begin{corollary}\label{vesmacor}
Suppose $A_{\geq 1}$ is generated by $b_1,\ldots,b_m$ and $N:=\max\{\deg(b_1),\ldots,\deg(b_m)\}$.
Then $M\in\Gr A$ is $\Tors A$-closed if and only if 
$\uhom_A(A/A_{\geq Nn},M)=\underline{\Ext}_A^1(A/A_{\geq Nn},M)=0$ for all $n\geq 1$.
\end{corollary}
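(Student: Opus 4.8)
The plan is to derive Corollary~\ref{vesmacor} by combining Corollary~\ref{qqqcor} with the proof technique of Proposition~\ref{vesma}, essentially showing that in each argument of that proposition one may replace the family $\{A_{\geq n}\}_{n\geq 1}$ by the coarser family $\{A_{\geq Nn}\}_{n\geq 1}$ without loss. The key observation is that Corollary~\ref{qqqcor} already tells us that $\Tors A$ consists precisely of those $M$ for which every homogeneous element is annihilated by some $A_{\geq Nn}$; so the ``test objects'' $A/A_{\geq Nn}$, $n\geq 1$, still generate $\Tors A$ as an sh-torsion class (by the same reasoning as in Lemma~\ref{tfilterG} and Proposition~\ref{qqq}, each $A/A_{\geq Nn}$ lies in $\Tors A$, and conversely every torsion module is covered by shifts of these).

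First I would prove the implication that vanishing of $\uhom_A(A/A_{\geq Nn},M)$ and $\underline{\Ext}_A^1(A/A_{\geq Nn},M)$ for all $n\geq 1$ forces $M$ to be $\Tors A$-closed. This follows verbatim from the $(2)\Rightarrow(1)$ step of Proposition~\ref{vesma}: given $T\in\Tors A$, use Corollary~\ref{qqqcor} to cover $T$ by $\bigoplus_{x\in T}A/A_{\geq Nn_x}(-d_x)$ with $d_x=\deg(x)$, take the kernel $T'$ (again torsion, again covered by such shifts), and apply $\underline{\Ext}_A^{\geq 0}(-,M)$ — which commutes with coproducts and is insensitive to the shifts $(-d_x)$ since $\uhom_A$ and $\underline{\Ext}_A$ package all shifts together — to the defining short exact sequence $T'\hookrightarrow\bigoplus_x A/A_{\geq Nn_x}(-d_x)\twoheadrightarrow T$ to conclude $\uhom_A(T,M)=\underline{\Ext}_A^1(T,M)=0$.

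For the converse, the cleanest route is simply to note that $A/A_{\geq Nn}\in\Tors A$ for every $n\geq 1$ (e.g. by Proposition~\ref{qqq} or Corollary~\ref{qqqcor}), so if $M$ is $\Tors A$-closed then $\uhom_A(A/A_{\geq Nn},M)=\Gr A(A/A_{\geq Nn},\,\textstyle\bigoplus_m M[m])$ — but more carefully one argues componentwise: $M[m]$ is also $\Tors A$-closed for each $m\geq 0$ (since $\Tors A$ is sh-torsion, hence $M\mapsto M[m]$ preserves the Serre quotient's injectives/closed objects, cf. the proof of Lemma~\ref{horosho}), and a $\Tors A$-closed module has no homomorphisms from, and no $\Ext^1$ into it from, any torsion module; summing over $m$ gives the vanishing of the $\bb N$-graded groups $\uhom_A(A/A_{\geq Nn},M)$ and $\underline{\Ext}_A^1(A/A_{\geq Nn},M)$.

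The main obstacle, such as it is, lies in making the $(2)\Rightarrow(1)$ argument airtight with the coarser family: one must be sure that Corollary~\ref{qqqcor} genuinely provides covers of torsion modules by shifts of the $A/A_{\geq Nn}$ alone (not by $A/A_{\geq j}$ for arbitrary $j$), and that no ``induction over intermediate degrees'' of the kind used in the $(3)\Rightarrow(2)$ step of Proposition~\ref{vesma} is needed here — it is not, because we are not trying to bootstrap from $n=1$ but are given vanishing for the whole family $\{Nn\}_{n\geq 1}$ outright. Once that is observed, the proof is a direct transcription of the relevant parts of Proposition~\ref{vesma}, which is exactly what the sentence preceding the corollary (``The proof of the preceding proposition and Corollary~\ref{qqqcor} imply\ldots'') signals.
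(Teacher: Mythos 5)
Your proposal is correct and follows exactly the route the paper intends: the paper gives no written proof beyond the remark that Proposition~\ref{vesma} and Corollary~\ref{qqqcor} imply the statement, and your elaboration — the forward direction from $A/A_{\geq Nn}\in\Tors A$ together with closedness of the shifts $M[m]$, and the reverse direction by rerunning the $(2)\Rightarrow(1)$ covering argument with the coarser family supplied by Corollary~\ref{qqqcor} — is precisely that argument. Your observation that no analogue of the $(3)\Rightarrow(2)$ induction is needed, since vanishing is assumed for the whole family $\{Nn\}_{n\geq 1}$ at once, is also correct.
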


The torsion theory $\Tors A$ is the smallest localising subcategory of $\Gr A$ such that
each $\lambda_n$ from the family~\eqref{Lambda} becomes an isomorphism in the quotient category. 
In other words, if we start with the family~\eqref{Lambda},
we choose those injectives $E$ for which $\Gr A(-,E)$ takes each 
map in $\Lambda$ to an isomorphism (and call them $\Omega$-modules). 
$\Tors A$ is set to be the torsion theory of objects left orthogonal to $\Omega$-modules and $\QGr A:=\Gr A/\Tors A$.
Then each map in $\Lambda$ is a stable equivalence.

We have collected enough information about $\QGr A$ to illustrate its similarity with stable homotopy 
theory. Namely, consider the closed symmetric monoidal category
$\Gr(\bb S_*)$ of positively graded pointed simplicial sets. The 
sphere spectrum $\cc S=(S^0,S^1,S^2,\ldots)$, which is a kind of
the tensor algebra $T(S^1)$ of the circle $S^1$, is a (non-commutative) ring object in $\Gr(\bb S_*)$. Denote by 
$Sp_{S^1}$ the category of simplicial right $S^1$-spectra, which is nothing but the category of graded right modules 
in $\Gr(\bb S_*)$ over the sphere spectrum $\cc S$.

The category $Sp_{S^1}$ enjoys a level model structure in which weak equivalences and cofibrations are defined 
levelwise in each degree $n\in\bb N$. The stable model structure on $Sp_{S^1}$ is defined by Bousfield localization 
of the level model structure with respect to the family of natural monomorphisms
   $$\ff L=\{\ell_n:\cc S_+(-n)\to\cc S(-n)\mid n\geq 0\},$$
consisting of the shifts of the inclusion $\cc S_+\hookrightarrow\cc S$.
Here $\cc S_+=(*,S^1,S^2,\ldots)=S^1\wedge\cc S(-1)$ and the $-n$th shift $X(-n)$ of a spectrum $X=(X_0,X_1,\ldots)$
is the spectrum $(*,\bl{n-1}\ldots,*,X_0,X_1,\ldots)$. The fibrant objects in the stable model structure of $Sp_{S^1}$
are the $\Omega$-spectra. Recall that a map $f:X\to Y$ of spectra is a {\it stable equivalence\/} if it induces 
isomorphisms of stable homotopy groups. Equivalently, $f$ is a stable equivalence if and only if 
$f^*:\Map_*(Y,E)\to\Map_*(X,E)$ is a weak equivalence of simplicial sets for every $\Omega$-spectrum $E$.
Each map in $\ff L$ is then a stable equivalence and weak equivalences in the stable model structure are 
the stable equivalences.

We see that $\QGr A$ and $Sp_{S^1}$ can be defined in the same way as soon as we translate the algebraic and
topological languages into each other accordingly. It is worth mentioning that the role of Serre's
localization theory in Grothendieck categories is played by Bousfield's localization theory in model categories (and vice versa).

A disadvantage of $Sp_{S^1}$ is that it lacks the structure of a closed symmetric monoidal
category. In their breakthrough paper~\cite{HSS} Hovey, Shipley and Smith introduce and study
the category of symmetric spectra $Sp_{S^1}^\Sigma$. It is a closed symmetric monoidal
category and enjoys the stable model category structure which is Quillen equivalent to the stable 
model category of non-symmetric spectra $Sp_{S^1}$. Similarly to~\cite{HSS}, we can introduce and study
the category of ``symmetric quasi-coherent sheaves $\QGr^\Sigma A$" associated to (non-commutative)  graded algebras
enjoying a compatible action of the symmetric groups. It shares lots of common properties with symmetric spectra
and recovers classical quasi-coherent sheaves on a projective scheme (see Section~\ref{reconstrsection}).
 
\section{Symmetric graded modules}\label{symmsection}

In this section we fix a commutative ring $k$. If $G$ is a group we can identify left and right $kG$-modules
by means of the anti-automorphism $g\mapsto g^{-1}$ of $G$. In particular, a right $kG$-module $M$ 
is regarded as a left $kG$-modules if we set $gm:=mg^{-1}$ for all $m\in M$ and $g\in G$~\cite[p.~55]{Br}.
Consider a $k$-linear category $k\Sigma$ whose objects are the natural numbers. The
morphisms $k\Sigma(m,n)$ are the groups rings $k\Sigma_n$ whenever $m=n$ and zero otherwise.
The {\it category of symmetric sequences of $k$-modules $\Gr^\Sigma k$} is the category of additive functors
$(k\Sigma,\Ab)$. Denote each representable functor of $(k\Sigma,\Ab)$ by $F_mk$, $m\geq 0$. They form
a family of finitely generated projective generators of $(k\Sigma,\Ab)$. 

Unravelling the latter definition, a symmetric sequence in $\Gr^\Sigma k$ is a sequence $(M_0,M_1,\ldots)$
with each $M_n$ being a $k\Sigma_n$-module. Morphisms are graded morphisms of $k$-modules 
$(f_i:M_i\to N_i)_{i\in\bb N}$ such that each $f_i$ is a $k\Sigma_i$-homomorphism. 
The category $\Gr^\Sigma k$ is Grothendieck with a 
family of projective generators given by representable functors. As $k\Sigma$ is a symmetric
monoidal preadditive category (the monoidal product in $k\Sigma$ of $m,n\in\bb N$ is given by $m+n$), 
a theorem of Day~\cite{Day}
implies $\Gr^\Sigma k$ is closed symmetric monoidal with monoidal product 
   $$(M\wedge N)_t=\bigoplus_{p+q=t}k\Sigma_t\otimes_{k\Sigma_p\otimes k\Sigma_q}M_p\otimes N_q.$$
The monoidal unit is given by the symmetric sequence $(k,0,0,\ldots)$. By construction, $(F_mk)_n=(0,\ldots,k\Sigma_n,0,\ldots)$
and $F_mk\wedge F_nk=F_{m+n}k$.
   
The {\it tensor product\/} $f\wedge g:X\wedge Y\to X'\wedge Y'$ of the maps
$f:X\to X'$ and $g:Y\to Y'$ in $\Gr^\Sigma k$ takes $\alpha\otimes X_p\otimes Y_q$ to
$\alpha\otimes X_p'\otimes Y_q'$ by means of $f_p\otimes g_q$ for the summand labelled by
$\alpha\in k\Sigma_{p+q}$.

The twist isomorphism $twist:X\wedge Y\to Y\wedge X$ for $X,Y\in\Gr^\Sigma k$ is the
natural map taking the summand $\alpha\otimes X_p\otimes Y_q$ to the summand 
$\alpha\chi_{q,p}\otimes Y_q\otimes X_p$
for $\alpha\in k\Sigma_{p+q}$, where $\chi_{q,p}\in\Sigma_{p+q}$ is the $(q,p)$-shuffle
given by $\chi_{q,p}(i) = i+p$ for $1\leq i \leq q$ and $\chi_{q,p}(i) = i-q$ for $q < i \leq p + q$.
It is worth noting that the map defined without the shuffle permutation is not a map of symmetric sequences.

\begin{defs}\label{symmdef}
A {\it symmetric graded $k$-algebra\/} or a {\it $(\Sigma,k)$-algebra\/} $E$ is a ring 
object in $\Gr^\Sigma k$, In detail, $E$ consists of the following data:

\begin{itemize}
\item[$\diamond$] a sequence $E_n\in k\Sigma_n\Mod$ for $n\geq 0$;
\item[$\diamond$] $\Sigma_n\times\Sigma_m$-equivariant {\it multiplication maps}
   $$\mu_{n,m}:E_n\otimes E_m\to E_{n+m},\quad n, m\geq 0$$
\item[$\diamond$] a {\it unit map} $\iota_0:k\to E_0$.
\end{itemize}
This data is subject to the following conditions:

(Associativity) The square
   $$\xymatrix{E_n\otimes E_m\otimes E_p\ar[d]_{\mu_{n,m}\otimes\id}\ar[r]^{\id\otimes\mu_{m,p}}&E_n\otimes E_{m+p}\ar[d]^{\mu_{n,m+p}}\\
                       E_{n+m}\otimes E_{p}\ar[r]_{\mu_{n+m,p}}&E_{n+m+p}}$$
commutes for all $n, m, p\geq 0$.

(Unit) The two composites
   $$E_n\cong E_n\otimes k\xrightarrow{E_n\otimes\iota_0}E_n\otimes E_0\xrightarrow{\mu_{n,0}}E_n$$
   $$E_n\cong k\otimes E_n\xrightarrow{\iota_0\otimes E_n}E_0\otimes E_n\xrightarrow{\mu_{0,n}}E_n$$
are the identity for all $n\geq 0$. Note that $E_0$ is a unital $k$-algebra in this case.

A morphism $f:E\to E'$ of symmetric graded $k$-algebras consists of $k\Sigma_n$-homomorphisms
$f_n:E_n\to E'_n$ for $n\geq 0$, which are compatible with the multiplication and unit maps in the sense that
$f_{n+m}\circ\mu_{n,m}=\mu_{n,m}\circ(f_n\otimes f_m)$ for all $n,m\geq 0$, and $f_0\circ\iota_0=\iota_0$.

A symmetric graded $k$-algebra $E$ is {\it commutative\/} if the square
   $$\xymatrix{E_n\otimes E_m\ar[d]_{\mu_{n,m}}\ar[r]^{twist}&E_m\otimes E_{n}\ar[d]^{\mu_{m,n}}\\
                       E_{n+m}\ar[r]_{\chi_{n,m}}&E_{m+n}}$$
commutes for all $n,m\geq 0$. Note that $E_0$ is a commutative ring in this case.
\end{defs}

\begin{exs}\label{primery}
(1) The tensor algebra $T(V)=k\oplus V\oplus V^{\otimes 2}\oplus\cdots$ of a $k$-module $V$
over a commutative ring $k$ is a 
typical example of a symmetric commutative graded $k$-algebra. Each $\Sigma_n$ acts on $V^{\otimes n}$
by permutation. Note that $T(V)$ is not commutative as a non-symmetric graded $k$-algebra unless $V=k$ or $V=0$.

(2) The exterior $k$-algebra $\Lambda(V)=\bigoplus_{n\geq 0}\Lambda^nV$ of a $k$-module $V$
is commutative symmetric graded. By definition, $\Lambda^nV=V^{\otimes n}/\cc A^n$ with $\cc A^n$ generated by the elements
of the form $v_1\otimes\cdots\otimes v_n$ such that $v_i=v_j$ for some $i\ne j$. 
Each $\Sigma_n$ acts on $\Lambda^{n}V$ by permutation.

(3) Any graded commutative $k$-algebra $A=\bigoplus_{n\geq 0}A_n$ can 
be regarded as a symmetric commutative graded $k$-algebra with trivial action
of symmetric groups.

(4) Following~\cite[Section~2]{GTLMS} a class of commutative symmetric $k$-algebras can be constructed as follows.
Let $(\cc C,\otimes)$ be a symmetric monoidal $k$-linear category with finite colimits and let $E$ be a ring 
object in the monoidal category $\cc C^\Sigma$ of symmetric sequences in $\cc C$. For any object $P\in\cc C$
the graded $k$-module
   $$(E_0,\Hom_{\cc C}(P,E_1),\Hom_{\cc C}(P^{\otimes 2},E_2),\ldots)$$
is a symmetric $k$-algebra which is commutative whenever $E$ is.

(5) Given a symmetric motivic Thom $T$-spectrum $E$ in the sense of~\cite{GN} the graded Abelian group
of linear $E$-framed correspondences from point to point
   $$(\bb ZF^E_0(pt,pt),\bb ZF^E_1(pt,pt),\bb ZF^E_2(pt,pt),\ldots)$$
is a symmetric ring which is commutative whenever the motivic Thom ring spectrum $E$ is.

(6) Consider $k\Sigma_*:=\bigoplus_{n\geq 0}k\Sigma_n$ and let
   $$\mu_{n,m}:k\Sigma_n\otimes k\Sigma_m\to k\Sigma_{n+m}$$
be induced by $\Sigma_n\times \Sigma_m\to\Sigma_{n+m}$, $(\sigma,\tau)\mapsto\sigma\times\tau$.
The map $\iota_0:k\to k\Sigma_0$ is defined in a canonical way. With these morphisms $k\Sigma_*$
becomes a commutative symmetric $k$-algebra.
\end{exs}

\begin{defs}\label{emodule}
A {\it right module $M$\/} over a symmetric graded $k$-algebra $E\in\Gr^\Sigma k$ consists of the following data:
\begin{itemize}
\item[$\diamond$] a sequence of objects $M_n\in k\Sigma_n\Mod$ for $n\geq 0$,
\item[$\diamond$] $\Sigma_n\times\Sigma_m$-equivariant action maps 
$\alpha_{n,m}:M_n\otimes E_m\to M_{n+m}$ for $n,m\geq 0$.
\end{itemize}
The action maps have to be associative and unital in the sense that the diagrams commute
   $$\xymatrix{M_n\otimes E_m\otimes E_p\ar[r]^{M_n\wedge\mu_{m,p}}\ar[d]_{\alpha_{n,m}\otimes E_p}&M_n\otimes E_{m+p}\ar[d]^{\alpha_{n,m+p}}
                       &&&M_n\cong M_n\otimes S\ar[dr]_{\id_{M_n}}\ar[r]^{M_n\otimes\iota_0}&M_n\otimes E_0\ar[d]^{\alpha_{n,0}}\\
                       M_{n+m}\otimes E_p\ar[r]_{\alpha_{n+m,p}}&M_{n+m+p}
                       &&&&M_n}$$
for all $n,m,p\geq 0$. A {\it morphism\/} $f:M\to N$ of right $E$-modules consists of
$\Sigma_n$-equivariant maps $f_n:M_n\to N_n$ for $n\geq 0$, which are compatible with the action
maps in the sense that $f_{n+m}\circ\alpha_{n,m}=\alpha_{n,m}\circ(f_n\otimes E_m$) for all
$n,m\geq 0$. We denote the category of right $E$-modules by $\Gr^\Sigma E$.
\end{defs}

The category $\Gr^\Sigma E$ can be identified with the category of additive functors
from a $k$-linear category $\cc E$ to Ab. By definition, the objects of $\cc E$ are the natural numbers.
Its morphisms are given by $k$-modules
   $$\cc E(m,n)=  
   \begin{cases}
  k\Sigma_n\otimes_{k\Sigma_{n-m}}E_{n-m}, & m\leq n \\
  0, & m> n
  \end{cases}$$
Here $\Sigma_{n-m}$ is embedded into $\Sigma_n$ by $\sigma\mapsto\id_{\{1,\ldots,m\}}\times\sigma$.
Under this identification representable functors $\cc E(m,-)$ are mapped to
projective generators of $\Gr^\Sigma E$, denoted by $F_mE$. By construction,
$F_mE_n=k\Sigma_n\otimes_{k\Sigma_{n-m}}E_{n-m}$ if $m\leq n$ and zero otherwise.

\begin{remark}
If $E=T(V)$ is the tensor algebra of $V\in\Mod k$, then the category $\Gr^\Sigma E$
is identified with the category of $V$-spectra in the closed symmetric monoidal category $\Mod k$.
We refer the reader to~\cite[Section~7]{H} for a general context of $T$-spectra in reasonable categories.
\end{remark}

Given a symmetric commutative graded $k$-algebra $E$, the category $\Gr^\Sigma E$ is
closed symmetric monoidal with $E=F_0E$ the monoidal unit. The monoidal product
will be denoted by $\wedge_E$. By construction, $F_mE\wedge_E F_nE=F_{m+n}E$
for all $m,n\geq 0$. It is also worth mentioning that $F_mE=F_mk\wedge E$,
where the wedge product is taken in $\Gr^\Sigma k$.

\begin{lemma}\label{exact}
Suppose $E$ is a symmetric commutative graded $k$-algebra. Each projective generator $F_mE\in\Gr^\Sigma E$,
$m\geq 0$, is flat in the sense that $F_mE\wedge_E-$ is an exact functor.
\end{lemma}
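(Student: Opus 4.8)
The plan is to reduce the flatness of $F_mE$ to an explicit description of the functor $F_mE\wedge_E-$ and then verify exactness directly on that description. First I would recall from the discussion preceding the lemma that $F_mE=F_mk\wedge E$, where the wedge on the right is taken in the ambient category $\Gr^\Sigma k$ of symmetric sequences. Since $E$ is the monoidal unit of $\Gr^\Sigma E$ and the $E$-module structure is induced from the $\Gr^\Sigma k$-structure, one expects a natural isomorphism
   $$F_mE\wedge_E M\cong F_mk\wedge M,$$
for every right $E$-module $M$, where the right-hand wedge is computed in $\Gr^\Sigma k$ (forgetting that the result again carries an $E$-action). Granting this, the question becomes whether $F_mk\wedge-$ is an exact endofunctor of $\Gr^\Sigma k$; and since exactness of a functor between Grothendieck categories can be checked after forgetting down to the underlying graded $k$-modules (abelian structure, kernels and cokernels are computed degreewise), it suffices to examine the effect of $F_mk\wedge-$ on each homogeneous component.

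The key computation is then the formula for the wedge product in $\Gr^\Sigma k$ recalled in Section~\ref{symmsection}: for $X,Y\in\Gr^\Sigma k$,
   $$(X\wedge Y)_t=\bigoplus_{p+q=t}k\Sigma_t\otimes_{k\Sigma_p\otimes k\Sigma_q}X_p\otimes Y_q.$$
Taking $X=F_mk$, whose only nonzero component is $(F_mk)_m=k\Sigma_m$, this collapses to a single summand:
   $$(F_mk\wedge Y)_t=k\Sigma_t\otimes_{k\Sigma_m\otimes k\Sigma_{t-m}}k\Sigma_m\otimes Y_{t-m}\cong k\Sigma_t\otimes_{k\Sigma_{t-m}}Y_{t-m}$$
for $t\geq m$, and $0$ for $t<m$. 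Thus in degree $t$ the functor $F_mk\wedge-$ is $k\Sigma_t\otimes_{k\Sigma_{t-m}}(-)$ applied to the $(t-m)$th component, i.e. induction of $k\Sigma_{t-m}$-modules up to $k\Sigma_t$ along the inclusion $\Sigma_{t-m}\hookrightarrow\Sigma_t$. Induction along a subgroup inclusion of finite groups is exact because $k\Sigma_t$ is free — hence in particular flat — as a right $k\Sigma_{t-m}$-module (a basis is given by coset representatives). Therefore each degreewise component of $F_mk\wedge-$ is exact, so $F_mk\wedge-$ is exact, and hence so is $F_mE\wedge_E-$.

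The step I expect to be the main obstacle is establishing the natural isomorphism $F_mE\wedge_E M\cong F_mk\wedge M$ cleanly, i.e. checking that the ``free'' $E$-module $F_mE=F_mk\wedge E$ behaves like an extension of scalars so that $F_mE\wedge_E M$ really does forget to the $\Gr^\Sigma k$-level wedge $F_mk\wedge M$. One way to see this without unwinding coends is to use the identification of $\Gr^\Sigma E$ with additive functors $(\cc E,\Ab)$ and of $F_mE$ with the representable functor $\cc E(m,-)$: under this identification $\wedge_E$ is the Day convolution for the monoidal structure on $\cc E$, and $\cc E(m,-)\wedge_E M\cong$ (the $m$th shift/twist of $M$), which degreewise is exactly $k\Sigma_t\otimes_{k\Sigma_{t-m}}M_{t-m}$ — matching the computation above. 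Either route reduces the lemma to the exactness of induction $k\Sigma_t\otimes_{k\Sigma_{t-m}}-$, which is the genuinely easy part. Alternatively, one may bypass the isomorphism entirely and observe directly from the $(\cc E,\Ab)$-description that $F_mE\wedge_E-$ is, degreewise, the induction functor, giving the result immediately; I would present whichever of these two is shortest given the notation already set up.
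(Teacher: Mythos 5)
Your argument is correct and follows essentially the same route as the paper: rewrite $F_mE\wedge_E M\cong F_mk\wedge M$, compute the level-$t$ component to identify it with the induction $k\Sigma_t\otimes_{k\Sigma_{t-m}}M_{t-m}$, and conclude from freeness of $k\Sigma_t$ over $k\Sigma_{t-m}$ (the paper invokes Brown, Prop.\ III.5.1, to make the coset decomposition explicit, and splits the check into left exactness degreewise plus right exactness from the left adjointness of $\wedge_E$, whereas you get both at once from the degreewise freeness — a harmless shortcut).
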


\begin{proof}
Consider a monomorphism $f:M\to N$ in $\Gr^\Sigma E$. Then $F_mE\wedge_Ef$ is isomorphic to
$F_mk\wedge E\wedge_EM\to F_mk\wedge E\wedge_EN$. The latter arrow is isomorphic to
$\phi:F_mk\wedge M\to F_mk\wedge N$. In each level $\ell\geq 0$, the map $\beta_\ell$ equals
   $$\bigoplus_{n+q=\ell}k\Sigma_\ell\otimes_{k\Sigma_n\otimes k\Sigma_q}k\Sigma_n\otimes M_q\to
       \bigoplus_{n+q=\ell}k\Sigma_\ell\otimes_{k\Sigma_n\otimes k\Sigma_q}k\Sigma_n\otimes N_q.$$
By~\cite[Proposition~III.5.1]{Br} the latter arrow is isomorphic to
   $$\oplus_{g\in\Sigma_\ell/\Sigma_q}M_q\to\oplus_{g\in\Sigma_\ell/\Sigma_q}N_q,$$
where $g$ ranges over any set of representatives for the left cosets of $\Sigma_q$ in $\Sigma_\ell$.
As this map is a monomorphism, so is $\phi_\ell$. We see that $F_mE\wedge_Ef$ is degreewise a monomorphism,
and hence the functor $F_mE\wedge_E-$ is left exact. It is right exact as a left adjoint to the internal Hom-functor
$[F_mE,-]:\Gr^\Sigma E\to\Gr^\Sigma E$.
\end{proof}

The {\it shift symmetric $E$-module\/} $M[1]$ of $M\in\Gr^\Sigma E$ is defined as 
$M[1]_n=M_{1+n}$ with action
of $\Sigma_n$ by restriction of the $\Sigma_{1+n}$-action on
$M_{1+n}$ along the obvious embedding $\Sigma_n\hookrightarrow
\Sigma_{1+n}$ taking $\tau\in\Sigma_n$ to
$1\oplus\tau\in\Sigma_{1+n}$. The multiplication map $M[1]\wedge E\to M[1]$ is the
reindexed multiplication map for $M$. The $m$-th shift $M[m]$ is defined recursively:
$M[m]:=(M[m-1])[1]$. Note that $M[m]=[F_mE,M]$.

\begin{corollary}\label{inj}
Suppose $E$ is a symmetric commutative graded $k$-algebra and $Q$ is an injective $E$-module.
Then the $m$-th shift $Q[m]$ is injective as well. 
\end{corollary}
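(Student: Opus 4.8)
The plan is to deduce that $Q[m]$ is injective from the already-established flatness of the shift-representing object $F_mE$. The key identification is the one recorded just before the statement: $M[m] = [F_mE, M]$, i.e. the $m$-th shift functor is the internal Hom-functor $[F_mE, -]$ out of the flat projective generator $F_mE$. Since $\Gr^\Sigma E$ is closed symmetric monoidal, $[F_mE, -]$ is right adjoint to $F_mE \wedge_E -$. Thus $(-)[m] = [F_mE,-]$ has a left adjoint, namely $F_mE \wedge_E -$, which by Lemma~\ref{exact} is exact.

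**Next I would** invoke the standard homological fact: a right adjoint whose left adjoint is exact preserves injectives. Concretely, let $Q$ be injective in $\Gr^\Sigma E$. To check $Q[m] = [F_mE, Q]$ is injective, one must show $\Gr^\Sigma E(-, [F_mE, Q])$ sends monomorphisms to epimorphisms. But by adjunction $\Gr^\Sigma E(N, [F_mE, Q]) \cong \Gr^\Sigma E(F_mE \wedge_E N, Q)$ naturally in $N$. Given a monomorphism $N' \hookrightarrow N$, applying the exact functor $F_mE \wedge_E -$ yields a monomorphism $F_mE \wedge_E N' \hookrightarrow F_mE \wedge_E N$ (here exactness — in particular left-exactness — is what is used), and then injectivity of $Q$ makes $\Gr^\Sigma E(F_mE \wedge_E N, Q) \to \Gr^\Sigma E(F_mE \wedge_E N', Q)$ surjective. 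Transporting back along the natural isomorphism gives surjectivity of $\Gr^\Sigma E(N, Q[m]) \to \Gr^\Sigma E(N', Q[m])$, so $Q[m]$ is injective.

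**The only point requiring care** — and the main (mild) obstacle — is confirming that the shift functor really is $[F_mE, -]$ with $F_mE \wedge_E -$ as its genuine left adjoint, rather than merely being abstractly isomorphic to some such functor; but this is exactly the content of the sentence "$M[m]=[F_mE,M]$" stated in the excerpt, together with the closed symmetric monoidal structure on $\Gr^\Sigma E$. Granting that, the argument is a one-line application of the adjoint-functor/exactness principle, and it suffices to treat $m=1$ and iterate, since $Q[m] = (Q[m-1])[1]$ by the recursive definition of the shift.
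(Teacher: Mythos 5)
Your argument is correct and is precisely the paper's own (the paper's proof is a one-liner citing Lemma~\ref{exact} and the identification $Q[m]=[F_mE,Q]$, implicitly invoking the standard fact that a right adjoint with exact left adjoint preserves injectives). You have simply unpacked that adjunction reasoning explicitly; no divergence in method.
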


\begin{proof}
This follows from the preceding lemma and the fact that $Q[m]=[F_mE,Q]$.
\end{proof}

We recall the definition of the internal Hom-object $\uhom^\Sigma_E(M,N)\in\Gr^\Sigma E$ of $M,N\in\Gr^\Sigma E$
for the convenience of the reader (we also write it as $[M,N]$ for brevity).

First, recall that $\Gr^\Sigma E$ is tensored over $\Mod k$ as follows. For any $V\in\Mod k$ and $M\in\Gr^\Sigma E$
$V\otimes M$ is the symmetric sequence $(V\otimes M_0,V\otimes M_1,\ldots)$, where $\Sigma_n$ 
naturally acts on $V\otimes M_n$. Maps defining the $E$-module structure on $V\otimes M$ are given by
   $$(V\otimes M_n)\otimes E_l\cong V\otimes(M_n\otimes E_l)\xrightarrow{V\otimes m}V\otimes M_{n+l}.$$

Second, for any $k\geq 0$ there is a natural map $\phi^k:E_k\otimes M\to M[k]$ in $\Gr^\Sigma E$ given by
   $$\phi_n^k:E_k\otimes M_n\lra{tw}M_n\otimes E_k\lra{m}M_{n+k}\xrightarrow{\chi_{n,k}}M_{k+n}.$$
It is worth noting that this cannot be a morphism in $\Gr^\Sigma E$ without shuffle permutations.

Finally, we define a symmetric spectrum $[X,Y]$ in level $n$ by
   $$[M,N]_n:=\Gr^\Sigma E(M,N[n]).$$
The $\Sigma_n$-action is induced by the action on $N[n]$. Namely, in every level $N[n]_m$ 
the symmetric group $\Sigma_n$ acts by the inclusion $(-\times 1): \Sigma_n\to \Sigma_{n+m}$, 
and these actions are compatible with the structure multiplication maps. The structure multiplication map 
$[M,N]_n\otimes E_\ell\to[M,N]_{n+\ell}$ is the composite
   $$(M,N[n])\otimes E_\ell\to(M,E_\ell\otimes N[n])\xrightarrow{(M,\phi^\ell)}(M,N[\ell+n])\xrightarrow{(M,\chi_{\ell,n})}N[n+\ell].$$

With this description a natural isomorphism $[F_mE,M]\cong M[m]$ is given at level $n$ by
   $$[F_mE,M]_n=(F_mE,M[n])\cong M[n]_m=M_{n+m}\xrightarrow{\chi_{n,m}}M_{m+n}=M[m]_n.$$
We also refer the reader to~\cite[Example~I.2.25]{Sch} for the discussion of the internal Hom-object
on symmetric $S^1$-spectra.

\section{The category of symmetric quasi-coherent sheaves}\label{qgrsigma}

Let $E$ be a commutative symmetric graded $k$-algebra (with $E_{\geq 1}$ 
not necessarily finitely generated) 
such that $E_0=k$. 
We have a pair of adjoint functors
   $$V:\Gr E\rightleftarrows\Gr^\Sigma E:U,$$
where $U$ is the forgetful functor. The functor $V$ is fully determined by its values on projective generators
mapping $E(-n)$ to $F_nE$, $n\in\bb N$. In detail, every $M\in\Gr E$ is the coend
   $$M=\int^nM_n\otimes E(-n).$$
By definition, $V(M):=\int^nM_n\otimes F_nE$.

For every $n\geq 0$ consider a morphism in $\Gr^\Sigma E$
   \begin{equation}\label{strelka}
    a_n:E[n]\wedge_E F_nE\to E,
   \end{equation}
which is adjoint to $\id:E[n]\to[F_nE,E]=E[n]$.
We say that an injective symmetric module $Q\in\Gr^\Sigma E$ is {\it stably fibrant\/} if 
$[a_n,Q]:Q=[E,Q]\to[E[n]\wedge_E F_nE,Q]$ is an isomorphism for every $n\geq 0$. Using Corollary~\ref{inj}
and the fact that $[F_mE,-]$ respects isomorphisms,
the $m$-th shift $Q[m]$ of $Q$ is stably fibrant.

\begin{lemma}\label{doroga}
Suppose $M\in\Gr^\Sigma E$ is such that $M_0=\cdots=M_{n-1}=0$ and $U(M)$
is generated by $m_i\in M_n$, $i\in I$, as a non-symmetric right $E$-module. 
Then $VU(M)\cong M[n]\wedge_E F_nE$.
\end{lemma}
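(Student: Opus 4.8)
The statement relates two constructions: on one hand $VU(M)$, obtained by forgetting the symmetric structure of $M$ to a non-symmetric graded $E$-module and then freely symmetrizing; on the other hand the twisted shift $M[n]\wedge_E F_nE$. Since $V$ is defined as the left adjoint sending $E(-m)$ to $F_mE$, and since everything in sight is a colimit of representables, I would first set up the comparison at the level of generators and then pass to coends.

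First I would describe $U(M)$ explicitly. By hypothesis $M_0=\cdots=M_{n-1}=0$ and $U(M)$ is generated as a non-symmetric right $E$-module by $\{m_i\}_{i\in I}\subset M_n$; since $M_n$ is a $k\Sigma_n$-module, it is spanned over $k$ by the $\sigma m_i$, $\sigma\in\Sigma_n$, and in general $M_{n+\ell}$ is a quotient of $\bigoplus_i (k\Sigma_{n+\ell}\otimes_{k\Sigma_\ell} E_\ell)$ — i.e. $U(M)$ is a quotient of $\bigoplus_i E(-n)$ where the relations live in degree $\geq n$. So there is a presentation $\bigoplus_j E(-n_j)\to\bigoplus_i E(-n)\to U(M)\to 0$ in $\Gr E$ with all $n_j\geq n$. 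Applying $V$, which is a left adjoint hence right exact and commutes with direct sums, I get $\bigoplus_j F_{n_j}E\to\bigoplus_i F_nE\to VU(M)\to 0$.

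Next I would compute the right-hand side. By the natural isomorphism $[F_nE,-]\cong(-)[n]$ established at the end of Section~\ref{symmsection}, and using $F_mE\wedge_E F_nE=F_{m+n}E$, one has $F_mE[n]\wedge_E F_nE$ expressible in terms of the $F$'s; in particular, since the functor $-\,[n]\wedge_E F_nE$ is a composite of a left and a right adjoint... — more carefully, the cleanest route is: the functor $T:=(-)[n]\wedge_E F_nE$ preserves colimits in the second slot (as $\wedge_E F_nE$ is a left adjoint by Lemma~\ref{exact}'s remark), but $(-)[n]=[F_nE,-]$ is only a right adjoint. The trick is that $M$, by the degree vanishing $M_{<n}=0$, actually lies in the image of the functor $-\,\wedge_E F_nE$ up to the shift; precisely, the counit/unit of the $([F_nE,-],F_nE\wedge_E-)$-type adjunction, combined with $M_{<n}=0$, should give $M\cong M[n]\wedge_E F_nE$ canonically when $M$ is generated in degree $n$... no — rather, $M[n]\wedge_E F_nE$ is the "reflection" of $M$ onto modules generated in degrees $\geq n$, and the hypothesis says $M$ already is such. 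So I would prove the isomorphism $M[n]\wedge_E F_nE \cong VU(M)$ by checking both are the colimit of the same diagram: apply $T$ to the presentation of $M[n]$, or better, observe $F_mE[n]\wedge_E F_nE\cong F_mE$ for $m\geq n$ via the counit $a_n$ of \eqref{strelka}, and that this counit is an isomorphism in the relevant range.

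**Main obstacle.** The delicate point is that $(-)[n]$ is not a left adjoint, so $M[n]\wedge_E F_nE$ is not literally computed by applying a colimit-preserving functor to a presentation of $M$; the argument must exploit the vanishing $M_0=\cdots=M_{n-1}=0$ to show that the natural map $M[n]\wedge_E F_nE\to M$ (induced by the $a_n$ of \eqref{strelka}, or rather its $M$-version) is an isomorphism, and simultaneously identify the source with $VU(M)$. I expect the cleanest execution is: (i) build the free presentation of $U(M)$ in $\Gr E$ with generators in degree $n$ and relations in degrees $\geq n$; (ii) apply $V$ to get a presentation of $VU(M)$ by sums of $F_{m}E$'s with $m\geq n$; (iii) show the same presentation computes $M[n]\wedge_E F_nE$ by checking $F_mE\cong F_{m-n}E[n]\wedge_E F_nE$ naturally for $m\geq n$ (using $F_{m-n}E[n]\cong[F_nE,F_{m-n}E]$ and $F_nE\wedge_E F_{m-n}E=F_mE$ together with the triangle identity that makes $a_n$ split the relevant composite), and that these identifications are compatible with the presentation maps; (iv) conclude the two cokernels agree. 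Verifying the compatibility in step (iii) across the presentation maps is the part requiring genuine care; everything else is bookkeeping with coends and the formulas $F_mE\wedge_E F_nE=F_{m+n}E$, $[F_mE,M]=M[m]$.
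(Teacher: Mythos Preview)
Your plan is essentially the paper's proof: present $U(M)$ by $\bigoplus_J E(-n_j)\to\bigoplus_I E(-n)\to U(M)\to 0$ with $n_j\geq n$, apply $V$ to get $\bigoplus_J F_{n_j}E\to\bigoplus_I F_nE\to VU(M)\to 0$, and then identify this with a presentation of $M[n]\wedge_E F_nE$. So the approach is correct.

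Where you make life harder than necessary is in step~(iii). You worry that $(-)[n]$ is not a left adjoint and therefore try to build the comparison generator-by-generator via $F_mE\cong F_{m-n}E[n]\wedge_E F_nE$, flagging the compatibility of these identifications with the presentation maps as the ``main obstacle''. The paper sidesteps this entirely: it never applies $(-)[n]$ to a presentation of $M$. Instead it observes directly that $M[n]$ is generated in degree~$0$ by the same elements $m_i$, so the \emph{shifted} presentation $\bigoplus_J E(-(n_j-n))\to\bigoplus_I E\to U(M[n])\to 0$ lifts (via the $(V,U)$ adjunction) to an exact sequence
\[
\bigoplus_{j\in J}F_{n_j-n}E\to\bigoplus_I E\to M[n]\to 0
\]
in $\Gr^\Sigma E$, with the same index sets and the same structure maps. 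Now smash with $F_nE$: since $-\wedge_E F_nE$ is right exact and $F_aE\wedge_E F_bE=F_{a+b}E$, this yields exactly the presentation of $VU(M)$ you already have, and the cokernels agree. No separate compatibility check is needed because both presentations are induced from the single presentation of $U(M)$ in $\Gr E$.
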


\begin{proof}
It follows from our assumption on $M$ that there is an exact sequence in $\Gr E$
   $$\oplus_{j\in J}E(-n_j)\to\oplus_IE(-n)\to U(M)\to 0$$
with $n_j\geq n$ for all $j\in J$.
As $V$ is right adjoint, the sequence
   $$\oplus_{j\in J}F_{n_j}E\to\oplus_IF_nE\to VU(M)\to 0$$
is exact in $\Gr^\Sigma E$. 
On the other hand, there is an exact sequence in $\Gr^\Sigma E$
   $$\oplus_{j\in J}F_{n_j-n}E\to\oplus_IE\to M[n]\to 0.$$
Smashing it with $F_nE$, we get the desired isomorphism.
\end{proof}

\begin{remark}
The assumptions on the symmetric module $M$ from the previous lemma imply 
$M[n]$ is of the form $(M_n,M_nE_1,M_nE_2,\ldots)$, where $\Sigma_m$ acts on $M_nE_m$
by $\tau(m_nx)=m_n\tau(x)$, $x\in E_m$, $\tau\in\Sigma_m$. Modules of this form will also be called
{\it suspension $E$-modules}. They are analogs of suspension spectra of spaces in topology.
\end{remark}

By Lemma~\ref{exact} each projective generator $F_nE$ of $\Gr^\Sigma E$ is flat.
By~\cite[Lemma~3.1]{SlSt} the functor $[-,Q]:\Gr^\Sigma E\to\Gr^\Sigma E$ is exact for 
any injective $Q\in\Gr^\Sigma E$. We set
   $$\Tors^\Sigma E:=\{T\in\Gr^\Sigma E\mid[T,Q]=0\textrm{ for all stably fibrant $Q$}\}.$$
By construction, $\Tors^\Sigma E$ is the smallest sh-localizing subcategory of $\Gr^\Sigma E$ 
(i.e. closed under smashing with $F_mE$-s) containing 
$\kr a_n$ and $\coker a_n$, $n\geq 0$. As we have mentioned above, the $m$-th shift
$Q[m]$ of a stably fibrant module is stably fibrant.

We say that a localizing subcategory $\cc T\subset \Gr^\Sigma E$ is {\it tensor\/} if $T\wedge_E M\in\cc T$
for any $T\in\cc T$ and $M\in\Gr^\Sigma E$.

\begin{lemma}\label{tensor}
$\Tors^\Sigma E$ is a localizing tensor subcategory of $\Gr^\Sigma E$. If $E$ is finitely generated 
as a non-symmetric algebra, then
$\Tors^\Sigma E$ contains $V(\Tors E)$.
\end{lemma}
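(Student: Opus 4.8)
The plan is to prove the two assertions of Lemma~\ref{tensor} separately, using the internal-Hom characterization of $\Tors^\Sigma E$ and the adjunction $(V,U)$.

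For the \emph{tensor} statement, the key observation is that stable fibrancy is detected by the maps $a_n:E[n]\wedge_E F_nE\to E$, and that the internal Hom $[-,Q]$ interacts well with $\wedge_E$ via the adjunction $[T\wedge_E M,Q]\cong[T,[M,Q]]$. So first I would show that if $Q$ is stably fibrant and $M\in\Gr^\Sigma E$, then $[M,Q]$ is again stably fibrant. Since each $F_nE$ is flat (Lemma~\ref{exact}) and $[-,Q]$ is exact for injective $Q$ by \cite[Lemma~3.1]{SlSt}, the object $[M,Q]$ is injective whenever $Q$ is; and the condition that $[a_n,[M,Q]]$ be an isomorphism translates, via the adjunction isomorphism $[E[n]\wedge_E F_nE\wedge_E M,Q]\cong[E[n]\wedge_E F_nE,[M,Q]]$ and the symmetry/associativity of $\wedge_E$, into the requirement that $[a_n\wedge_E M,Q]$ be an isomorphism — but $[a_n\wedge_E\id_M,Q]\cong[\id_{[M,Q]},[a_n,Q]]$-type manipulation reduces this to $[a_n,Q]$ being an isomorphism, which holds by hypothesis. (One must be slightly careful that $a_n\wedge_E M$ is the relevant map; here flatness of $F_nE$ guarantees $\wedge_E M$ does not destroy the kernel/cokernel bookkeeping.) Granting this, for $T\in\Tors^\Sigma E$ and arbitrary $M$ we get $[T\wedge_E M,Q]\cong[T,[M,Q]]=0$ for every stably fibrant $Q$, since $[M,Q]$ is stably fibrant; hence $T\wedge_E M\in\Tors^\Sigma E$. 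That $\Tors^\Sigma E$ is localizing is already asserted in the surrounding text (it is the smallest sh-localizing subcategory containing the $\kr a_n,\coker a_n$), so nothing new is needed there.

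For the second statement, suppose $E$ is finitely generated as a non-symmetric algebra, so that $\Tors E$ is defined as in Section~\ref{dubois}. By Proposition~\ref{qqq} and Lemma~\ref{tfilterG}(1), $\Tors E$ is the smallest sh-torsion class of $\Gr E$ containing the modules $E/E_{\geq n}$, $n\geq 0$. Since $V$ is right exact (being a left adjoint) and commutes with shifts in the appropriate sense (it sends $E(-n)$ to $F_nE$), the subcategory $\{M\in\Gr E\mid V(M)\in\Tors^\Sigma E\}$ is closed under quotients, coproducts and shifts; and because $\Tors^\Sigma E$ is localizing (closed under extensions and subobjects among the relevant modules) together with the tensor property just proved, one checks it is an sh-torsion class of $\Gr E$. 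Therefore it suffices to show $V(E/E_{\geq n})\in\Tors^\Sigma E$ for each $n$. Here I would invoke Lemma~\ref{doroga}: applying $V$ to the surjection $E(-n)\twoheadrightarrow$ (the graded module $E_{\geq n}$ is generated in degrees $\geq n$), one identifies $VU$ of a suspension-type module with $M[n]\wedge_E F_nE$; more directly, the exact sequence $E_{\geq n}\hookrightarrow E\twoheadrightarrow E/E_{\geq n}$ goes under $V$ to an exact sequence, and $V(E_{\geq n})\to V(E)=E$ is, after the identifications of Lemma~\ref{doroga}, precisely (a coproduct of shifts of) the map $a_n:E[n]\wedge_E F_nE\to E$ whose kernel and cokernel lie in $\Tors^\Sigma E$ by definition. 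Chasing this shows $V(E/E_{\geq n})$ is built from $\kr a_n$ and $\coker a_n$ by extension, hence lies in $\Tors^\Sigma E$.

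The main obstacle I anticipate is the bookkeeping in the first part: verifying cleanly that $[M,Q]$ is stably fibrant requires massaging the defining isomorphism for $a_n$ (adjoint to $\id:E[n]\to[F_nE,E]$) through the closed monoidal structure — in particular tracking the shuffle permutations $\chi_{n,m}$ that appear in the identifications $[F_mE,M]\cong M[m]$ and $M[n]\wedge_E F_nE$ — so that $[a_n\wedge_E M,Q]$ is genuinely identified with $[a_n,[M,Q]]$ and not merely with something isomorphic up to an uncontrolled automorphism. Flatness of $F_nE$ (Lemma~\ref{exact}) and exactness of $[-,Q]$ for injective $Q$ are the two technical inputs that make this go through; once they are in place the rest is formal manipulation with adjunctions and the stability of stable fibrancy under shifts already noted in the text.
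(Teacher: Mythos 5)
Your argument for the tensor property pivots on the claim that $[M,Q]$ is stably fibrant for an \emph{arbitrary} $M\in\Gr^\Sigma E$, and this is where the proof breaks. First, injectivity of $[M,Q]$ requires $\Hom(-,[M,Q])\cong\Hom(-\wedge_EM,Q)$ to be exact, i.e.\ it requires $-\wedge_EM$ to preserve monomorphisms — flatness of $M$ itself, not of the generators $F_nE$, and not exactness of $[-,Q]$ in the contravariant variable (which is what \cite[Lemma~3.1]{SlSt} provides). A general object of $\Gr^\Sigma E$ is not flat (already over $k=\bb Z$ a symmetric sequence with a non-flat level fails), so $[M,Q]$ need not even be injective, hence need not be stably fibrant in the sense of the definition. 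Second, even granting injectivity, your reduction of ``$[a_n,[M,Q]]$ is an isomorphism'' to ``$[a_n,Q]$ is an isomorphism'' via $[a_n\wedge_E\id_M,Q]$ is circular: since $[-,Q]$ is exact, that map is an isomorphism precisely when $[\kr(a_n\wedge_E\id_M),Q]=[\coker(a_n\wedge_E\id_M),Q]=0$, and identifying these kernels/cokernels with $\kr(a_n)\wedge_EM$, $\coker(a_n)\wedge_EM$ and showing they are killed by $[-,Q]$ is exactly the tensor property you are trying to prove. The paper avoids both issues by writing $M$ as the coend $\int^nM_n\otimes F_nE$, so that $M\wedge_ET\cong\int^nM_n\otimes(F_nE\wedge_ET)$; since $\Tors^\Sigma E$ is localizing it suffices that each $F_nE\wedge_ET$ lies in it, and this follows from $[F_nE\wedge_ET,Q]\cong[T,Q[n]]=0$ because the shift $Q[n]$ of a stably fibrant module \emph{is} stably fibrant (here flatness of $F_nE$, Lemma~\ref{exact} and Corollary~\ref{inj}, is used). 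Your internal-Hom step is correct exactly for $M=F_nE$; you are missing the coend reduction that makes it suffice.

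Your treatment of $V(\Tors E)\subset\Tors^\Sigma E$ is a genuinely different route from the paper's: the paper shows that $U(Q)$ is $\Tors E$-closed for every stably fibrant $Q$ (via Lemma~\ref{doroga} and Corollary~\ref{vesmacor}) and then gets $[V(T),Q]\cong\uhom_E(T,U(Q))=0$ directly from the adjunction, whereas you push the generators of $\Tors E$ through $V$. Your route can be repaired, but as written it has two gaps. (i) The class $\{M\in\Gr E\mid V(M)\in\Tors^\Sigma E\}$ is closed under quotients, extensions, coproducts and shifts because $V$ is a colimit-preserving left adjoint, but it is not obviously closed under subobjects ($V$ need not preserve monomorphisms), so you cannot invoke Lemma~\ref{tfilterG} (``smallest sh-torsion class''). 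Instead use Proposition~\ref{qqq}/Corollary~\ref{qqqcor}: every $T\in\Tors E$ is a quotient of a coproduct of shifts of the modules $E/E_{\geq Nn}$, so closure under quotients, coproducts and shifts suffices. (ii) Lemma~\ref{doroga} applies only to modules generated in a single degree, so the identification of $V(E_{\geq m})\to E$ with $a_m$ is available for $m=Nn$ (with $N$ the top degree of the algebra generators), not for arbitrary $m$; with that adjustment, right exactness of $V$ does give $V(E/E_{\geq Nn})\cong\coker(a_{Nn})\in\Tors^\Sigma E$ as you intend.
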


\begin{proof}
$\Tors^\Sigma E$ is plainly localizing. We can write every $M\in\Gr^\Sigma E$ as the coend
   $$M=\int^nM_n\otimes F_nE.$$
Therefore one has for any $T\in\Tors^\Sigma E$
   $$M\wedge_E T\cong\int^nM_n\otimes(F_nE\wedge_E T)$$
As $\Tors^\Sigma E$ is localizing, $M\wedge_E T\in\Tors^\Sigma E$ provided that each
$F_nE\wedge_E T\in\Tors^\Sigma E$. The latter is true due to
   $$[F_nE\wedge_E T,Q]\cong[T,Q[n]]=0$$
for any stably fibrant $Q$.

Next, suppose $E_{\geq 1}$ is finitely generated by $b_1,\ldots,b_n$ and
$N=\max\{\deg(b_1),\ldots,\deg(b_n)\}$. Then $E_{\geq Nn}$, $n>0$, is finitely generated and all generators 
belong to $E_{Nn}$. By Lemma~\ref{doroga} $VU(E_{\geq Nn})\cong E[Nn]\wedge_EF_{Nn}E$
and $a_{Nn}:E[Nn]\wedge_EF_{Nn}E\to E$ is isomorphic to the composite map
$e_n:VU(E_{\geq Nn})\xrightarrow{\epsilon_{Nn}}E_{\geq Nn}\hookrightarrow E$, where $\epsilon_{Nn}$
is the counit adjunction map. We see that $[a_{Nn},Q]$ is isomorphic to $[e_{Nn},Q]:Q\to[VU(E_{\geq Nn}),Q]$. The 
map of graded Abelian groups $U([e_{Nn},Q]):UQ\to U[VU(E_{\geq Nn}),Q]=\uhom_E(E_{\geq Nn},U(Q))$ is 
an isomorphism for every stably fibrant $Q$ as the former map is.
Since $\uhom_E(E/E_{\geq Nn},U(Q))=0$ and $E$ is projective, it follows that 
$\underline{\Ext}^1_E(E/E_{\geq Nn},U(Q))=0$. By Corollary~\ref{vesmacor} $U(Q)$ is $\Tors E$-closed in $\Gr E$.

Let $T\in\Tors E$. For every stably fibrant $Q$ one has an isomorphism of graded Abelian groups
   $$[V(T),Q]\cong\uhom_E(T,U(Q))=0.$$
as $U(Q)$ is $\Tors E$-closed. We conclude that $V(\Tors E)\subset\Tors^\Sigma E$.
\end{proof}

\begin{proposition}\label{tensorcor}
Under the conditions of Lemma~\ref{tensor} for every
$\Tors^\Sigma E$-closed module $M\in\Gr^\Sigma E$, the module $U(M)$ is $\Tors E$-closed in $\Gr E$.
\end{proposition}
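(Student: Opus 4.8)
I want to deduce that $U(M)$ is $\Tors E$-closed from the hypothesis that $M$ is $\Tors^\Sigma E$-closed, essentially by running the argument of Lemma~\ref{tensor} ``in reverse''. Recall that $M$ being $\Tors^\Sigma E$-closed means $M$ is injective (equivalently, stably fibrant) in the localized category; in particular, $[a_n,M]:M=[E,M]\to[E[n]\wedge_E F_nE,M]$ is an isomorphism for every $n\geq 0$, and $M$ is an injective object of $\Gr^\Sigma E$. By Corollary~\ref{vesmacor}, it suffices to show that $\uhom_E(E/E_{\geq Nn},U(M))=\underline{\Ext}^1_E(E/E_{\geq Nn},U(M))=0$ for all $n\geq 1$, where $N=\max\{\deg(b_1),\ldots,\deg(b_m)\}$ and $b_1,\ldots,b_m$ generate $E_{\geq 1}$ as a non-symmetric algebra.

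First I would invoke the identifications already established in the proof of Lemma~\ref{tensor}: by Lemma~\ref{doroga}, $VU(E_{\geq Nn})\cong E[Nn]\wedge_E F_{Nn}E$, and under this isomorphism the map $a_{Nn}$ corresponds to the composite $e_{Nn}:VU(E_{\geq Nn})\xrightarrow{\epsilon_{Nn}} E_{\geq Nn}\hookrightarrow E$. Since $M$ is $\Tors^\Sigma E$-closed, $[a_{Nn},M]$ is an isomorphism, hence so is $[e_{Nn},M]:[E,M]\to[VU(E_{\geq Nn}),M]$. Applying the forgetful functor $U$ and using the adjunction $(V,U)$ together with the identification of $U[V(-),M]$ with $\uhom_E(-,U(M))$, the map $U[e_{Nn},M]$ is the map $\uhom_E(E,U(M))\to\uhom_E(E_{\geq Nn},U(M))$ induced by the inclusion $E_{\geq Nn}\hookrightarrow E$; so this map is an isomorphism. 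From the short exact sequence $0\to E_{\geq Nn}\to E\to E/E_{\geq Nn}\to 0$ and the fact that $E$ is projective in $\Gr E$, the long exact $\underline{\Ext}_E(-,U(M))$-sequence then forces $\uhom_E(E/E_{\geq Nn},U(M))=0$ and $\underline{\Ext}^1_E(E/E_{\geq Nn},U(M))=0$ for every $n\geq 1$.

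With both vanishings in hand, Corollary~\ref{vesmacor} applies directly and gives that $U(M)$ is $\Tors E$-closed in $\Gr E$, which is exactly the assertion.

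The only point requiring a little care — and the step I'd flag as the main obstacle — is justifying the identification $U[V(-),M]\cong\uhom_E(-,U(M))$ of $\bb N$-graded abelian groups compatibly with the shift functors, since $\uhom_E$ is defined via $\bigoplus_{m\geq 0}\Gr E(-,U(M)[m])$ and $[V(-),M]$ via $\bigoplus_{m\geq 0}\Gr^\Sigma E(V(-),M[m])$. This follows from the $(V,U)$-adjunction together with the compatibility $U(M[m])=U(M)[m]$ of the two shift functors (which is immediate from the definitions of the shifts on $\Gr^\Sigma E$ and $\Gr E$, as $F_mE$ lies over $E(-m)$); granting this, the displayed isomorphism $[e_{Nn},M]\cong$ (inclusion-induced map) passes through $U$ as claimed, and the rest is the routine long-exact-sequence chase already used elsewhere in the section. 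Essentially, the proposition is the ``closed-object'' shadow of the containment $V(\Tors E)\subset\Tors^\Sigma E$ proved in Lemma~\ref{tensor}, obtained by the same computation read contravariantly.
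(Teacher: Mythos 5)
Your overall strategy---running the computation from the proof of Lemma~\ref{tensor} directly on $M$ and then invoking Corollary~\ref{vesmacor}---can be made to work, but as written it rests on a false premise. Being $\Tors^\Sigma E$-closed does \emph{not} mean being injective in $\Gr^\Sigma E$, nor being stably fibrant: the stably fibrant modules are the injective objects of the quotient category $\QGr^\Sigma E$, whereas a general closed module is only the kernel of a morphism between two stably fibrant modules. So your opening identification, and in particular the assertion that ``$M$ is an injective object of $\Gr^\Sigma E$'', is wrong. The one consequence you actually use---that $[a_{Nn},M]$ is an isomorphism for every closed $M$---happens to be true, but it needs its own argument: since $\kr a_n$ and $\coker a_n$ lie in $\Tors^\Sigma E$ and each shift $M[m]$ is again $\Tors^\Sigma E$-closed, one has $\Gr^\Sigma E(\kr a_n,M[m])=\Gr^\Sigma E(\coker a_n,M[m])=\Ext^1_{\Gr^\Sigma E}(\coker a_n,M[m])=0$, and the long exact sequences associated with $\kr a_n\hookrightarrow E[n]\wedge_EF_nE\twoheadrightarrow\im a_n$ and $\im a_n\hookrightarrow E\twoheadrightarrow\coker a_n$ then show that each component $[a_n,M]_m$ is an isomorphism. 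With that supplied, the remainder of your argument (the adjunction identification $U[V(-),M]\cong\uhom_E(-,U(M))$, the long exact $\underline{\Ext}_E(-,U(M))$-sequence using projectivity of $E$, and Corollary~\ref{vesmacor}) is correct.

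For comparison, the paper sidesteps this issue entirely: it writes $M=\kr(Q\to Q')$ with $Q,Q'$ stably fibrant, observes that the proof of Lemma~\ref{tensor} already shows $U(Q)$ and $U(Q')$ are $\Tors E$-closed, and concludes from the exactness of $U$ together with the fact that the class of $\Tors E$-closed objects is stable under kernels. Either route is acceptable; yours is more computational but is incomplete until the isomorphism $[a_{Nn},M]\cong[e_{Nn},M]$ is justified for closed, not merely stably fibrant, modules.
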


\begin{proof}
This follows from the proof of Lemma~\ref{tensor} if we use the facts that 
$M$ is the kernel of a map between stably fibrant objects and that the functor $U:\Gr^\Sigma E\to\Gr E$ is exact.
\end{proof}

\begin{defs}
The {\it category of symmetric quasi-coherent sheaves $\QGr^\Sigma E$} over a commutative graded
symmetric $k$-algebra $E$ is defined as the quotient Grothendieck category $\Gr^\Sigma E/\Tors^\Sigma E$.

Let $q$ denote the quotient functor $\Gr^\Sigma E\to\QGr^\Sigma E$. 
We shall identify $\QGr^\Sigma E$ with the full subcategory of
$\Tors^\Sigma E$-closed modules. A morphism $f:M\to N$ in $\Gr^\Sigma E$ is called
a {\it stable equivalence\/} if $q(f)$ is an isomorphism in $\QGr^\Sigma E$. Note that stable equivalences 
are closed under direct limits.
\end{defs}

\begin{lemma}\label{tensorlem}
Under the conditions of Lemma~\ref{tensor} the functor $V:\Gr E\to\Gr^\Sigma E$ preserves stable equivalences.
\end{lemma}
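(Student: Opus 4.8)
The plan is to reduce the statement to the $\Tors E$–closed criterion already established and then to use the exactness properties collected above. First I would recall that, by the identification of the quotient categories with the subcategories of closed objects, a morphism $f\colon M\to N$ in $\Gr E$ is a stable equivalence if and only if both $\krr f$ and $\coker f$ lie in $\Tors E$, equivalently $\uhom_E(-,Q)$ kills $\krr f$ and $\coker f$ for every $\Tors E$–closed $Q\in\Gr E$. Since $V\colon\Gr E\to\Gr^\Sigma E$ is right exact (it is a left adjoint), applying $V$ to a short exact sequence decomposition $0\to K\to M\to \im f\to 0$ and $0\to\im f\to N\to C\to 0$ produces right-exact sequences in $\Gr^\Sigma E$, so the cokernel of $V(f)$ is $V(C)$ and the kernel of $V(f)$ receives an epimorphism from $V(K)$ once one controls the $\Tor$ defect; in any case it suffices to show $V$ sends objects of $\Tors E$ into $\Tors^\Sigma E$ and that $V$ is exact on the relevant sequences, or more robustly, to argue directly on Hom-groups.

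The cleaner route, which I would actually carry out, is the adjunction argument used in the proof of Lemma~\ref{tensor}: for any $T\in\Gr E$ and any stably fibrant $Q\in\Gr^\Sigma E$ one has a natural isomorphism of graded Abelian groups
   $$[V(T),Q]\cong\uhom_E(T,U(Q)),$$
and by the proof of Lemma~\ref{tensor} (invoked under the standing hypothesis that $E$ is finitely generated as a non-symmetric algebra) $U(Q)$ is $\Tors E$–closed in $\Gr E$. Now take a stable equivalence $f\colon M\to N$ in $\Gr E$, so $\krr f,\coker f\in\Tors E$. By the above display, $[V(\krr f),Q]=[V(\coker f),Q]=0$ for every stably fibrant $Q$, hence $V(\krr f)$ and $V(\coker f)$ lie in $\Tors^\Sigma E$. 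Since $V$ is right exact, $\coker V(f)=V(\coker f)\in\Tors^\Sigma E$; for the kernel one writes $0\to\krr f\to M\to \im f\to 0$ and $0\to\im f\to N\to\coker f\to 0$, applies $V$, and uses that $\Tors^\Sigma E$ is closed under subobjects and quotients together with the already-proven fact $V(\Tors E)\subset\Tors^\Sigma E$ to conclude that the (co)kernel of $V(f)$ lies in $\Tors^\Sigma E$. Therefore $q\circ V(f)$ is an isomorphism in $\QGr^\Sigma E$, i.e. $V(f)$ is a stable equivalence.

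The main obstacle is the kernel: $V$ is only guaranteed to be right exact, so $V(\krr f)\to\krr V(f)$ need not be an isomorphism and one must argue that $\krr V(f)$ is nonetheless torsion. I would handle this by noting that $\krr V(f)$ is a subobject of $V(M)$ and that the natural surjection (from right-exactness of $V$ applied to $0\to\krr f\to M\to \im f\to 0$) gives an epimorphism $\krr\big(V(M)\to V(\im f)\big)\twoheadleftarrow$ something built from $V(\krr f)$ and a $\Tor$-term; more simply, factor $f$ as an epimorphism $M\twoheadrightarrow\im f$ followed by a monomorphism $\im f\hookrightarrow N$. For the epimorphism, $V$ preserves it, and its kernel $V(\krr f)$ is torsion; for the monomorphism $g\colon\im f\hookrightarrow N$ with torsion cokernel, apply $[-,Q]$ for stably fibrant $Q$ to the short exact sequence and use that $[\coker g,Q]=[V?\ ]$—here one should instead observe that $q$ is exact, $q V$ need not be, so the safest formulation is: $q(V(f))$ fits into the diagram obtained by applying the exact functor $q$ to the right-exact images of the two short exact sequences, and torsion-ness of $V(\krr f),V(\coker f)$ forces $q(V(\krr f))=q(V(\coker f))=0$, whence $q(V(f))$ is both epi and (after a short diagram chase using that $q$ kills $V$ of the torsion pieces) mono. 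This diagram chase in the abelian category $\QGr^\Sigma E$ is routine and is the only place where genuine care is needed.
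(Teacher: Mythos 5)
Your proposal correctly identifies the two essential ingredients — the adjunction isomorphism $[V(T),Q]\cong\uhom_E(T,U(Q))$ for stably fibrant $Q$, and the fact (Proposition~\ref{tensorcor}) that $U(Q)$ is $\Tors E$-closed — but you assemble them in a way that does not close. The gap is precisely where you suspect it: the kernel.

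Your strategy is to show $\kr V(f)$ and $\coker V(f)$ lie in $\Tors^\Sigma E$ by establishing that $V(\kr f),V(\coker f)\in\Tors^\Sigma E$ and appealing to right-exactness of $V$. For the cokernel this works since $V$ preserves cokernels. For the kernel it does not: writing $f$ as $M\twoheadrightarrow\im f\hookrightarrow N$, applying $V$ to the second sequence only gives right-exactness $V(\im f)\to V(N)\to V(\coker f)\to 0$, which says nothing whatsoever about $\kr V(g)$ where $g:\im f\hookrightarrow N$. Your "safest formulation" — apply $q$ to the right-exact sequences — yields that $q(V(p))$ is an isomorphism and $q(V(g))$ is an epimorphism, hence $q(V(f))$ is epi; but monomorphism of $q(V(g))$ requires $\kr V(g)\in\Tors^\Sigma E$, and no amount of diagram chasing with the right-exact sequences you have in hand will produce that. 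The phrase "after a short diagram chase ... mono" is a placeholder for an argument that does not exist with the stated ingredients. Note also that $\Tors^\Sigma E$ being "closed under subobjects and quotients" does not help: $\kr V(g)$ is a subobject of $V(\im f)$, which is not a torsion object.

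The paper avoids the whole issue by never attempting to compute $\kr V(f)$ or $\coker V(f)$ directly. Instead it verifies the Hom-criterion: a map in $\Gr^\Sigma E$ is a stable equivalence iff it induces isomorphisms on $[-,Q]$ for every stably fibrant $Q$ (this follows from the definition of $\Tors^\Sigma E$ and exactness of $[-,Q]$ for injective $Q$, just as in Lemma~\ref{horosho}). Now apply $\uhom_E(-,U(Q))$ to the two short exact sequences $\kr f\hookrightarrow M\twoheadrightarrow\im f$ and $\im f\hookrightarrow N\twoheadrightarrow\coker f$: since $U(Q)$ is $\Tors E$-closed, both $\uhom_E$ and $\underline{\Ext}^1_E$ vanish on the torsion pieces, so $f^*:\uhom_E(N,U(Q))\to\uhom_E(M,U(Q))$ is an isomorphism. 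By the adjunction this is exactly $V(f)^*:[V(N),Q]\to[V(M),Q]$, so $V(f)$ is a stable equivalence. If you rework your argument to compute $[\kr V(g),Q]$ via the exact functor $[-,Q]$ applied to $\kr V(g)\hookrightarrow V(\im f)\twoheadrightarrow\im V(g)$ and $\im V(g)\hookrightarrow V(N)\twoheadrightarrow V(\coker f)$, you will find yourself reproving exactly this Hom-criterion computation — so you may as well run it for $V(f)$ directly from the start.
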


\begin{proof}
Suppose $f:M\to N$ is a stable equivalence in $\Gr E$. We have an exact sequence
   $$0\to T\to M\lra fN\to T'\to 0$$
with $T,T'\in\Tors E$. It remains to apply the functor $\uhom_E(-,U(Q))$, where $Q$ is stably fibrant, 
to two short exact sequences $T\hookrightarrow M\twoheadrightarrow\im f$ and
$\im f\hookrightarrow N\twoheadrightarrow T'$ as well as Corollary~\ref{tensorcor}.
\end{proof}

The shift functor $M\mapsto M[n]$, $n\geq 0$, defines a
shift functor on $\QGr^\Sigma E$ for which we use the same notation. Indeed,
$M$ is the kernel of a morphism between stably fibrant modules which are closed under shifts.
Finally, we shall
write $\cc O^\Sigma(-n)=q(F_nE)$. Note that $\QGr^\Sigma$ is a
Grothendieck category with the family of generators $\{\cc O^\Sigma(-n)\}_{n\in\bb N}$. We will
write $\cc O^\Sigma$ to denote $q(E).$

The tensor product in $\Gr^\Sigma E$ induces a tensor product in $\QGr^\Sigma E$,
denoted by $\boxtimes$. More precisely, one sets
   $$X\boxtimes Y:=q(X\wedge_E Y)$$
for any $X,Y\in\QGr^\Sigma E$.

\begin{lemma}\label{d2}
Given $X,Y\in\Gr^\Sigma E$ there is a natural isomorphism in $\QGr^\Sigma E$:
$q(X)\boxtimes q(Y)\cong q(X\wedge_E Y)$. 
Moreover, the quadruple $(\QGr^\Sigma E,\boxtimes,[-,-],\cc O^\Sigma)$
defines a closed symmetric mo\-noi\-dal category with monoidal
unit $\cc O^\Sigma$ and internal Hom-object the same 
with that in $\Gr^\Sigma E$.
\end{lemma}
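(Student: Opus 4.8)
The statement has two parts: a natural isomorphism $q(X)\boxtimes q(Y)\cong q(X\wedge_E Y)$, and the claim that $(\QGr^\Sigma E,\boxtimes,[-,-],\cc O^\Sigma)$ is closed symmetric monoidal with the internal Hom inherited from $\Gr^\Sigma E$. The strategy is to realize $\QGr^\Sigma E$ as a quotient of the closed symmetric monoidal category $\Gr^\Sigma E$ by the localizing \emph{tensor} subcategory $\Tors^\Sigma E$ (Lemma~\ref{tensor}), and then invoke the standard fact that localization at a tensor-ideal transports the monoidal structure. Concretely, I would proceed as follows.

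\textbf{Step 1: the tensor product on $\QGr^\Sigma E$ is well-defined.} First I would recall that the quotient functor $q:\Gr^\Sigma E\to\QGr^\Sigma E$ has a fully faithful right adjoint identifying $\QGr^\Sigma E$ with the $\Tors^\Sigma E$-closed modules; write $s$ for this section, so $q\circ s\cong\id$. By definition $X\boxtimes Y=q(sX\wedge_E sY)$ (equivalently $q$ applied to $X\wedge_E Y$ for any representatives). To see this is independent of representatives, I would show that if $f:X\to X'$ is a stable equivalence then $f\wedge_E Y:X\wedge_E Y\to X'\wedge_E Y$ is a stable equivalence: indeed its kernel and cokernel are subquotients of $T\wedge_E Y$ and $T'\wedge_E Y$ for $T,T'\in\Tors^\Sigma E$ the kernel/cokernel of $f$ (using that $\wedge_E$ is right exact and, from Lemma~\ref{exact}, that smashing with the flat generators $F_nE$ is exact, hence $\wedge_E$ is computed by a flat resolution), and $\Tors^\Sigma E$ is a tensor subcategory by Lemma~\ref{tensor}. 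This gives the natural isomorphism $q(X)\boxtimes q(Y)\cong q(X\wedge_E Y)$ and shows $\boxtimes$ is a bifunctor on $\QGr^\Sigma E$.

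\textbf{Step 2: transport associativity, symmetry, and unit.} The associativity constraint, the twist (built from the shuffle permutations $\chi_{p,q}$ as in Section~\ref{symmsection}), and the unit isomorphisms for $(\Gr^\Sigma E,\wedge_E,E)$ all descend along $q$ using the isomorphism of Step~1 repeatedly: e.g. $q(X)\boxtimes(q(Y)\boxtimes q(Z))\cong q(X\wedge_E(Y\wedge_E Z))\cong q((X\wedge_E Y)\wedge_E Z)\cong(q(X)\boxtimes q(Y))\boxtimes q(Z)$, and the coherence (pentagon, hexagon, triangle) axioms in $\QGr^\Sigma E$ follow from those in $\Gr^\Sigma E$ because $q$ is essentially surjective and all the constraint maps are images under $q$ of the corresponding maps upstairs. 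The unit is $\cc O^\Sigma=q(E)$, and $\cc O^\Sigma\boxtimes q(X)\cong q(E\wedge_E X)\cong q(X)$.

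\textbf{Step 3: the internal Hom.} This is the delicate part and the main obstacle. I claim that for $M$ $\Tors^\Sigma E$-closed and \emph{any} $N\in\Gr^\Sigma E$, the internal Hom $[N,M]$ computed in $\Gr^\Sigma E$ is again $\Tors^\Sigma E$-closed, so that it serves as the internal Hom in $\QGr^\Sigma E$ with no need to reapply $s$. To see this, note that for $T\in\Tors^\Sigma E$ one has the adjunction isomorphism $\Gr^\Sigma E(T,[N,M])\cong\Gr^\Sigma E(T\wedge_E N,M)$, and $T\wedge_E N\in\Tors^\Sigma E$ since $\Tors^\Sigma E$ is a tensor subcategory; hence $[N,M]$ has no maps from torsion objects, and by the same token (applied to the cosyzygy, using that $[N,-]$ preserves injectives by the exactness in Lemma~\ref{exact} and~\cite[Lemma~3.1]{SlSt}) $[N,M]$ is $\Tors^\Sigma E$-closed. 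Then for $X,Y,Z\in\QGr^\Sigma E$ I would compute
\[
\QGr^\Sigma E(X\boxtimes Y,Z)\cong\Gr^\Sigma E(sX\wedge_E sY,sZ)\cong\Gr^\Sigma E(sX,[sY,sZ])\cong\QGr^\Sigma E(X,[Y,Z]),
\]
where the first step uses that $s$ is fully faithful and that $q$ inverts the stable equivalence $sX\wedge_E sY\to$ (its $\Tors^\Sigma E$-closure), the second is the adjunction in $\Gr^\Sigma E$, and the third uses that $[sY,sZ]$ is already $\Tors^\Sigma E$-closed. Naturality is clear, so $\boxtimes\dashv[-,-]$ on $\QGr^\Sigma E$. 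The expected friction points are: (i) verifying carefully that $\wedge_E$ of a stable equivalence is a stable equivalence — this genuinely needs the flatness/exactness from Lemma~\ref{exact} plus the tensor property of $\Tors^\Sigma E$, and one must be slightly careful since $\wedge_E$ is only right exact in general; and (ii) the bookkeeping in Step~3 showing $[N,M]$ stays closed when only $M$ is closed, which relies on $\Tors^\Sigma E$ being a two-sided tensor ideal (here it is, since $E$ is commutative). Everything else is a routine transfer of coherence data along the essentially surjective exact monoidal functor $q$.
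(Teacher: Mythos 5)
Your overall strategy — realize $\QGr^\Sigma E$ as the quotient of $\Gr^\Sigma E$ by the localizing tensor ideal $\Tors^\Sigma E$ and transport the closed symmetric monoidal structure — is exactly the route the paper takes (deferring the first assertion to \cite[Lemma~4.2]{GP1} and calling the second ``straightforward'' given the explicit internal Hom). Steps~1 and~2, including the observation that one must control $\Tor$-terms coming from the failure of left exactness of $\wedge_E$ by resolving with the flat generators $F_nE$, are fine.

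The one step that would fail as written is the parenthetical justification in Step~3: ``$[N,-]$ preserves injectives.'' This is not what \cite[Lemma~3.1]{SlSt} says (that lemma gives exactness of the \emph{contravariant} functor $[-,Q]$ for $Q$ injective), and it is generally false — $[N,Q]$ is injective precisely when $-\wedge_E N$ is exact, i.e.\ when $N$ is flat, which you are not assuming (compare $\Hom_{\bb Z}(\bb Z/2,\bb Q/\bb Z)=\bb Z/2$, not injective). Your conclusion that $[N,M]$ is $\Tors^\Sigma E$-closed for $M$ closed and $N$ arbitrary is nevertheless true, and the correct argument stays entirely within the toolkit you already invoked: take a projective resolution $P_\bullet\to T$ of $T\in\Tors^\Sigma E$ and a stably fibrant $Q$; then $\Ext^i_{\Gr^\Sigma E}(T,[N,Q])\cong H^i\bigl(\Gr^\Sigma E(P_\bullet\wedge_E N,Q)\bigr)\cong\Gr^\Sigma E\bigl(\Tor^E_i(T,N),Q\bigr)$, using that $\Gr^\Sigma E(-,Q)$ is exact for $Q$ injective. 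Since $\Tor^E_i(T,N)$ is a subquotient of $T\wedge_E F$ for $F$ in a flat resolution of $N$, and $\Tors^\Sigma E$ is a tensor Serre subcategory, $\Tor^E_i(T,N)\in\Tors^\Sigma E$, so the groups vanish; the cosyzygy argument then goes through with ``$[N,Q]$ is closed'' in place of ``$[N,Q]$ is injective.'' You should replace the claim about $[N,-]$ preserving injectives with this $\Tor$-vanishing argument.
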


\begin{proof}
The first claim is proven similarly to \cite[Lemma~4.2]{GP1}. The second claim
is now straightforward if we use the description of the internal Hom-object given in the preceding section.
\end{proof}

Recall from~\cite[p.~109]{DM} that an object $X$ of a symmetric monoidal category $(\cc V,\otimes,e)$ is
{\it invertible\/} if the endorfunctor $X\otimes-:\cc V\to\cc V$ is an equivalence.
Equivalently, there is an object $X'\in\cc V$ such that $X\otimes X'\cong e$.
For every $n\geq 0$ we shall write $\cc O^\Sigma(n)$ to denote $q(E[n])$. The morphism $a_n:E[n]\wedge_EF_nE\to E$
induces an isomorphism $\alpha_n:\cc O^\Sigma(n)\boxtimes\cc O^\Sigma(-n)\to\cc O^\Sigma$ in $\QGr^\Sigma E$.

\begin{corollary}\label{invertible}
For every integer $n$, the object $\cc O^\Sigma(n)\in\QGr^\Sigma E$ is 
invertible with $\cc O^\Sigma(n)^{-1}=\cc O^\Sigma(-n)$. Moreover,
$\cc O^\Sigma(m)\boxtimes\cc O^\Sigma(n)\cong\cc O^\Sigma(m+n)$ for all $m,n\in\bb Z$.
\end{corollary}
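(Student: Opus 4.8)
The plan is to leverage the isomorphism $\alpha_n:\cc O^\Sigma(n)\boxtimes\cc O^\Sigma(-n)\to\cc O^\Sigma$ already established just before the statement, together with the monoidal structure on $\QGr^\Sigma E$ from Lemma~\ref{d2} and the relation $F_mE\wedge_E F_nE=F_{m+n}E$ in $\Gr^\Sigma E$. First I would record that for $n\geq 0$ the objects $\cc O^\Sigma(n)=q(E[n])$ and $\cc O^\Sigma(-n)=q(F_nE)$ satisfy $\cc O^\Sigma(n)\boxtimes\cc O^\Sigma(-n)\cong\cc O^\Sigma$ via $\alpha_n$; since $\cc O^\Sigma$ is the monoidal unit (Lemma~\ref{d2}), this immediately shows $\cc O^\Sigma(n)$ is invertible with inverse $\cc O^\Sigma(-n)$ for every $n\geq 0$, and symmetrically $\cc O^\Sigma(-n)$ is invertible with inverse $\cc O^\Sigma(n)$. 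Thus every $\cc O^\Sigma(m)$, $m\in\bb Z$, is invertible, and inverses are unique up to canonical isomorphism in any symmetric monoidal category, so $\cc O^\Sigma(n)^{-1}\cong\cc O^\Sigma(-n)$.

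For the multiplicativity $\cc O^\Sigma(m)\boxtimes\cc O^\Sigma(n)\cong\cc O^\Sigma(m+n)$, I would split into cases according to signs. When $m,n\geq 0$: by Lemma~\ref{d2}, $\cc O^\Sigma(-m)\boxtimes\cc O^\Sigma(-n)=q(F_mE)\boxtimes q(F_nE)\cong q(F_mE\wedge_E F_nE)=q(F_{m+n}E)=\cc O^\Sigma(-(m+n))$, which handles the "negative $\times$ negative" case. For the "positive $\times$ positive" case $\cc O^\Sigma(m)\boxtimes\cc O^\Sigma(n)\cong\cc O^\Sigma(m+n)$ with $m,n\geq 0$, I would tensor both sides with the invertible object $\cc O^\Sigma(-(m+n))\cong\cc O^\Sigma(-m)\boxtimes\cc O^\Sigma(-n)$ and use the already-proven negative case together with the isomorphisms $\alpha_m,\alpha_n,\alpha_{m+n}$ and symmetry/associativity of $\boxtimes$ to reduce to $\cc O^\Sigma\boxtimes\cc O^\Sigma\cong\cc O^\Sigma$. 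For mixed signs, say $m\geq 0$ and $n\leq 0$ with $m+n\geq 0$, write $n=-n'$ with $n'\geq 0$, $n'\leq m$; then tensoring the desired relation with $\cc O^\Sigma(n')$ (invertible, inverse $\cc O^\Sigma(-n')=\cc O^\Sigma(n)$) reduces it to the positive case $\cc O^\Sigma(m)\cong\cc O^\Sigma(m+n)\boxtimes\cc O^\Sigma(n')$. The remaining sign combinations follow by the same cancellation trick, using that each $\cc O^\Sigma(j)$ is invertible.

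A cleaner way to package all of this, which I would prefer if space permits, is to observe that the assignment $n\mapsto\cc O^\Sigma(n)$ extends to a monoidal functor from the (discrete) group $\bb Z$ into the Picard groupoid of $(\QGr^\Sigma E,\boxtimes,\cc O^\Sigma)$: one has a distinguished isomorphism $\cc O^\Sigma(1)^{\boxtimes n}\cong\cc O^\Sigma(n)$ for $n\geq 0$ coming from $E[1]^{\wedge_E n}\cong E[n]$ in $\Gr^\Sigma E$, and then the general statement $\cc O^\Sigma(m)\boxtimes\cc O^\Sigma(n)\cong\cc O^\Sigma(m+n)$ is the statement that $\cc O^\Sigma(1)$ has infinite order in the Picard group with the indicated powers, which is forced once $\cc O^\Sigma(1)$ is invertible. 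The main obstacle I anticipate is purely bookkeeping: making the case analysis on signs uniform and checking that the various associativity and symmetry constraints of the closed symmetric monoidal structure on $\QGr^\Sigma E$ make the cancellation arguments coherent; there is no conceptual difficulty, since all the needed building blocks ($\alpha_n$, Lemma~\ref{d2}, $F_mE\wedge_E F_nE=F_{m+n}E$, and invertibility) are already in hand.
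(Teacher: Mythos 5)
Your proposal is correct and follows exactly the route the paper leaves implicit (the corollary is stated without a written proof): the isomorphism $\alpha_n$ gives invertibility immediately, the ``both-nonpositive'' case $\cc O^\Sigma(-m)\boxtimes\cc O^\Sigma(-n)\cong\cc O^\Sigma(-(m+n))$ comes from $F_mE\wedge_E F_nE=F_{m+n}E$ and Lemma~\ref{d2}, and the remaining sign cases follow by cancellation against invertible objects. One small caution on your ``cleaner'' aside: the isomorphism $E[1]^{\wedge_E n}\cong E[n]$ does not hold in $\Gr^\Sigma E$ itself (just as $sh^1\mathbb S\wedge sh^1\mathbb S\ne sh^2\mathbb S$ for symmetric spectra); there is only a stable equivalence, which is fine for your purposes since everything is computed after applying $q$, but as written the claim is too strong -- and the phrase ``$\cc O^\Sigma(1)$ has infinite order'' is not what is needed, only that $n\mapsto\cc O^\Sigma(n)$ is monoidal.
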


The author is grateful to Peter Bonart for the following lemma.

\begin{lemma}\label{bonart}
Let $(\cc V,\otimes,[-,-])$ be a closed symmetric monoidal category and let $X$ be an invertible object 
in $\cc V$. Then $X$ is strongly dualisable and there is an isomorphism of functors $[X,-]\cong X^{-1}\otimes-$.
\end{lemma}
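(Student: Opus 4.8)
The plan is to work purely formally inside the closed symmetric monoidal category $\cc V$, using only the defining adjunction $\cc V(A\otimes B,C)\cong\cc V(A,[B,C])$ and the invertibility hypothesis, i.e.\ the existence of $X^{-1}$ with $X\otimes X^{-1}\cong e$. First I would produce the candidate dual and the candidate evaluation/coevaluation maps: take $X^\vee:=X^{-1}$, let the coevaluation $\eta:e\to X\otimes X^{-1}$ be the chosen isomorphism, and let the evaluation $\varepsilon:X^{-1}\otimes X\to e$ be obtained from $\eta$ by composing with the symmetry $twist$ and the inverse isomorphism $X\otimes X^{-1}\xrightarrow{\sim}e$; equivalently $\varepsilon$ is the inverse of the isomorphism $e\xrightarrow{\sim}X^{-1}\otimes X$ gotten by twisting $\eta$. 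Since all the structure maps in sight are isomorphisms, the two triangle identities (the zig-zag equations) reduce to coherence identities in a symmetric monoidal category and hold automatically; this exhibits $X$ as strongly dualisable with $DX=X^{-1}$.

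Next I would identify the internal hom. For any strongly dualisable object $X$ there is a natural isomorphism $[X,Y]\cong DX\otimes Y$, built from the coevaluation and the adjunction: the map $DX\otimes Y\to[X,Y]$ is adjoint to $DX\otimes Y\otimes X\xrightarrow{1\otimes twist}DX\otimes X\otimes Y\xrightarrow{\varepsilon\otimes 1}Y$, and one checks it is an isomorphism by exhibiting the inverse via $[X,Y]\xrightarrow{\eta\otimes1}DX\otimes X\otimes[X,Y]\xrightarrow{1\otimes\mathrm{ev}}DX\otimes Y$ (here $\mathrm{ev}:X\otimes[X,Y]\to Y$, up to a twist, is the counit of the $\otimes$--$[-,-]$ adjunction). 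Since $DX=X^{-1}$, this gives the desired natural isomorphism of functors $[X,-]\cong X^{-1}\otimes-$. Naturality in $Y$ is immediate from the construction, and one can also note directly that $X^{-1}\otimes-$ is right adjoint to $X\otimes-$ (its inverse equivalence, hence automatically a two-sided adjoint by invertibility), while $[X,-]$ is right adjoint to $X\otimes-$ by definition; uniqueness of adjoints then furnishes the isomorphism without any diagram chase at all.

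Honestly, the whole statement is soft: I expect no real obstacle beyond bookkeeping. The only mildly delicate point is getting the symmetry twists in the right places so that the triangle identities literally hold rather than merely up to a sign or a shuffle — this is the same subtlety flagged earlier in the paper about $\phi^k$ and the twist isomorphism not being a map of symmetric sequences without the shuffle permutation $\chi_{q,p}$. So the cleanest write-up is probably the adjunction argument: observe $X\otimes-$ is an equivalence with quasi-inverse $X^{-1}\otimes-$, hence $X^{-1}\otimes-$ is right adjoint to $X\otimes-$; since $[X,-]$ is also right adjoint to $X\otimes-$, the two agree up to canonical natural isomorphism; strong dualisability of $X$ then follows because $[X,e]\cong X^{-1}\otimes e\cong X^{-1}$ and one checks the unit/counit of the equivalence supply the evaluation and coevaluation making $X^{-1}$ a strong dual. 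That reduces the proof to two lines plus a pointer to the standard fact that a one-sided adjoint to an equivalence is automatically a two-sided adjoint.
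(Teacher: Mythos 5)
Your final paragraph is essentially the paper's proof: observe that $X\otimes-$ is an equivalence with quasi-inverse $X^{-1}\otimes-$, which is therefore a right adjoint of $X\otimes-$; note that $[X,-]$ is another right adjoint; invoke uniqueness of adjoints; and read off $[X,e]\cong X^{-1}$. So you land on the same route, and it is correct.

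One caution about your first sketch, which you were right to be uneasy about. If $\eta\colon e\to X\otimes X^{-1}$ is an arbitrary isomorphism and $\varepsilon\colon X^{-1}\otimes X\to e$ is its twist-inverse, the triangle identities do \emph{not} ``hold automatically by coherence.'' Mac Lane coherence governs only the structural associators, unitors and symmetry; $\eta$ and $\varepsilon$ are extra data, so the snake composite $X\to X\otimes X^{-1}\otimes X\to X$ is merely \emph{some} automorphism of $X$, not a priori the identity. One has to rescale $\varepsilon$ (or $\eta$) by that automorphism to force the zig-zag equations --- a genuine, if small, lemma rather than a coherence identity. Your instinct to retreat to the adjoint-functor argument was the right one, and that is indeed what the paper does.
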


\begin{proof}
We have natural isomorphisms in $\cc V$
   $$X^\vee:=[X,e]\cong[X\otimes X^{-1},e\otimes X^{-1}]\otimes[e,X^{-1}]\cong X^{-1}.$$
The functor $X\otimes-:\cc V\to\cc V$ is left adjoint to both $[X,-]$ and $X^{-1}\otimes-$.
By uniquenes for adjoint functors there is a natural isomorphism $[X,-]\cong X^{-1}\otimes-$.
It induces a natural isomorphism $X^{-1}\otimes Y\cong[X,Y]$, hence $X$ is strongly dualizable.
\end{proof}

For any $X\in\QGr^\Sigma E$ and any integer $n$ we write $X(n)$ for $X\boxtimes\cc O^\Sigma(n)$.

\begin{corollary}\label{bonartcor}
For any $n\in\bb Z$ the object $\cc O^\Sigma(n)$ is strongly dualizable in $\QGr^\Sigma E$ and 
there is a natural isomorphism 
   $X(n)\lra{\cong}[\cc O^\Sigma(-n),X].$
In particular, for any $n\geq 0$ the object $X(n)$ is naturally isomorphic to $X[n]$.
\end{corollary}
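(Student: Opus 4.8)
The plan is to deduce Corollary~\ref{bonartcor} from the two results immediately preceding it, namely Corollary~\ref{invertible} and Lemma~\ref{bonart}. First I would recall that by Corollary~\ref{invertible} the object $\cc O^\Sigma(n)$ is invertible in the closed symmetric monoidal category $(\QGr^\Sigma E,\boxtimes,[-,-],\cc O^\Sigma)$ for every $n\in\bb Z$, with inverse $\cc O^\Sigma(-n)$. Then I would invoke Lemma~\ref{bonart} with $\cc V=\QGr^\Sigma E$, $\otimes=\boxtimes$ and $X=\cc O^\Sigma(n)$: it tells us immediately that $\cc O^\Sigma(n)$ is strongly dualizable, and that there is a natural isomorphism of functors $[\cc O^\Sigma(n),-]\cong\cc O^\Sigma(n)^{-1}\boxtimes-=\cc O^\Sigma(-n)\boxtimes-$. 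Replacing $n$ by $-n$ (legitimate since $n$ ranges over all of $\bb Z$) gives $[\cc O^\Sigma(-n),-]\cong\cc O^\Sigma(n)\boxtimes-$, and evaluating at $X$ yields the claimed natural isomorphism $X(n)=X\boxtimes\cc O^\Sigma(n)\cong[\cc O^\Sigma(-n),X]$, where I use commutativity of $\boxtimes$ to move the factor to the left.

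For the final sentence, I would observe that for $n\geq 0$ the internal Hom-object $[\cc O^\Sigma(-n),X]$ can be computed via the identification from Lemma~\ref{d2}: the internal Hom in $\QGr^\Sigma E$ agrees with that in $\Gr^\Sigma E$, and we have $\cc O^\Sigma(-n)=q(F_nE)$, so $[\cc O^\Sigma(-n),X]=[q(F_nE),X]$. Using the natural isomorphism $[F_nE,M]\cong M[n]$ established at the end of Section~\ref{symmsection} together with the compatibility of $q$ with shifts (recall the shift functor on $\QGr^\Sigma E$ was defined precisely because stably fibrant modules are closed under shifts), one gets $[\cc O^\Sigma(-n),X]\cong X[n]$ naturally. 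Combining with the isomorphism of the previous paragraph gives $X(n)\cong X[n]$ for $n\geq 0$.

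I do not anticipate a serious obstacle here, since the corollary is essentially a formal consequence of the two preceding results; the only point requiring a little care is the bookkeeping in the last sentence, where one must check that the abstract internal-Hom description $[F_nE,-]\cong (-)[n]$ in $\Gr^\Sigma E$ descends correctly to $\QGr^\Sigma E$ and matches the shift functor defined there. This is where I would be most careful to phrase things so that the naturality statement is genuinely what is claimed, but it amounts to unwinding definitions rather than any real argument.
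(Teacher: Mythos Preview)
Your proposal is correct and matches the paper's intended approach: the corollary is stated without proof immediately after Corollary~\ref{invertible} and Lemma~\ref{bonart}, and is meant to follow formally from these two results together with the identification $[F_nE,-]\cong(-)[n]$ and Lemma~\ref{d2}, exactly as you outline. The only care needed is the bookkeeping you already flag in the last paragraph.
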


\begin{defs}
Let $\cc C$ be a closed symmetric monoidal Grothendieck category. An object $X\in\cc C$
is said to be {\it enriched finitely presented\/} if the functor $[X,-]=\uhom_{\cc C}(X,-)$ preserves
direct limits. We say that $\cc C$ is {\it enriched locally finitely presented\/} if $\cc C$ has a family
of enriched finitely presented generators.

If the monoidal unit $e$ of $\cc C$ is finitely presented,
one can show similarly to~\cite[Lemma~4.5]{GG} that if $X$ is enriched finitely presented, then
$X$ is finitely presented in the usual sense.
\end{defs} 

\begin{lemma}\label{elfpc}
Let $\cc C$ be a closed symmetric monoidal Grothendieck category with strongly dualizable generators
$\{g_i\}_{i\in I}$. Then $\cc C$ is enriched locally finitely presented. If the monoidal unit $e$ is finitely presented then
$\{g_i\}_{i\in I}$ is a family of finitely presented generators for $\cc C$.
\end{lemma}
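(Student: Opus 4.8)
The statement to prove is Lemma~\ref{elfpc}: a closed symmetric monoidal Grothendieck category $\cc C$ with strongly dualizable generators $\{g_i\}_{i\in I}$ is enriched locally finitely presented, and if the monoidal unit $e$ is finitely presented then the $g_i$ are finitely presented generators. The plan is to show that each strongly dualizable object $g$ is enriched finitely presented, i.e.\ that $[g,-]$ preserves direct limits, and then invoke the hypothesis already recorded in the definition (``if $e$ is finitely presented and $X$ is enriched finitely presented, then $X$ is finitely presented'') for the second assertion.

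First I would recall the defining property of strong dualizability: $g$ has a dual $g^\vee$ together with coevaluation and evaluation maps $e\to g\otimes g^\vee$ and $g^\vee\otimes g\to e$ satisfying the triangle identities, and — crucially — the canonical natural transformation $g^\vee\otimes(-)\to[g,-]$ is an isomorphism. (This is exactly the characterization used in Lemma~\ref{bonart} for the invertible case; here we only need the weaker ``strongly dualizable'' version, whose proof is the standard triangle-identity argument.) Then, for any direct system $\{Y_j\}$ in $\cc C$, we have natural isomorphisms
$$[g,\colim_j Y_j]\cong g^\vee\otimes\colim_j Y_j\cong\colim_j(g^\vee\otimes Y_j)\cong\colim_j[g,Y_j],$$
where the middle isomorphism holds because $g^\vee\otimes(-)$, being a left adjoint (to $[g^\vee,-]$) in a closed monoidal category, preserves all colimits, in particular filtered ones. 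Hence $[g,-]$ preserves direct limits, so $g$ is enriched finitely presented.

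Applying this to each generator $g_i$ shows $\cc C$ has a family of enriched finitely presented generators, which is precisely the definition of enriched locally finitely presented; this gives the first claim. For the second claim, under the extra hypothesis that $e$ is finitely presented, the remark in the definition of enriched finitely presented objects (citing the analogue of \cite[Lemma~4.5]{GG}) says every enriched finitely presented object is finitely presented in the usual sense; applying this to each $g_i$ yields that $\{g_i\}_{i\in I}$ is a family of finitely presented generators, completing the proof.

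\textbf{Main obstacle.} There is no serious obstacle here; the one point requiring a little care is establishing that for a strongly dualizable $g$ the natural map $g^\vee\otimes(-)\Rightarrow[g,-]$ really is an isomorphism of functors (not merely an isomorphism $g^\vee\cong[g,e]$ on the unit). This is the standard fact that strongly dualizable objects are exactly those for which this comparison transformation is invertible, proven by a diagram chase with the triangle identities; one could alternatively cite it from \cite{DM} or \cite{HSS}. Everything else — that left adjoints preserve colimits, and that $g^\vee\otimes(-)$ is a left adjoint in a closed monoidal category — is routine, and the reduction of the ``finitely presented generators'' clause to the already-stated remark is immediate.
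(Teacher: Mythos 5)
Your proof is correct and follows essentially the same route as the paper: you identify $[g_i,-]$ with $g_i^\vee\otimes(-)$ via strong dualizability, note that this tensor functor preserves direct limits, and then invoke the stated remark (the analogue of~\cite[Lemma~4.5]{GG}) to pass from enriched finitely presented to finitely presented when $e$ is finitely presented. The only difference is that you spell out the standard justification for the comparison map $g^\vee\otimes(-)\Rightarrow[g,-]$ being an isomorphism, which the paper takes as known.
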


\begin{proof}
As every generator $g_i$ is strongly dualizable, there is an isomorphism of functors
$[g_i,-]\cong g_i^\vee\otimes-$, where $g_i^\vee=[g_i,e]$ is dual to $g_i$. Our statement follows 
from the fact that $g_i^\vee\otimes-$ preserves direct limits.
\end{proof}

\begin{corollary}\label{bonartcor2}
The Grothendieck category $\QGr^\Sigma E$ is enriched locally finitely presented.
\end{corollary}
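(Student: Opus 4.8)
The plan is to deduce Corollary~\ref{bonartcor2} directly from Lemma~\ref{elfpc} together with the structural facts about $\QGr^\Sigma E$ that have already been assembled. First I would recall that, by the definition of $\QGr^\Sigma E$ and Lemma~\ref{d2}, the quadruple $(\QGr^\Sigma E,\boxtimes,[-,-],\cc O^\Sigma)$ is a closed symmetric monoidal Grothendieck category, and that by construction (the remark immediately after the definition of $\QGr^\Sigma E$) the objects $\{\cc O^\Sigma(-n)\}_{n\in\bb N}$ form a generating family. Thus the only hypothesis of Lemma~\ref{elfpc} that needs checking is that these generators are strongly dualizable.

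Second, I would invoke Corollary~\ref{invertible}: for every $n\geq 0$ the object $\cc O^\Sigma(-n)$ is invertible in $\QGr^\Sigma E$, with inverse $\cc O^\Sigma(n)$. Then Lemma~\ref{bonart} (the Bonart lemma) tells us that every invertible object of a closed symmetric monoidal category is strongly dualizable. Hence each $\cc O^\Sigma(-n)$ is strongly dualizable, and the generating family $\{\cc O^\Sigma(-n)\}_{n\in\bb N}$ satisfies the hypothesis of Lemma~\ref{elfpc}. Applying that lemma yields that $\QGr^\Sigma E$ is enriched locally finitely presented, which is exactly the assertion of the corollary.

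There is essentially no obstacle here: the corollary is a formal consequence of results proved earlier in the section, and the proof is a two-line assembly. If one wanted the slightly stronger conclusion that the $\cc O^\Sigma(-n)$ are moreover finitely presented in the usual sense (the second clause of Lemma~\ref{elfpc}), one would additionally need $\cc O^\Sigma$ to be finitely presented in $\QGr^\Sigma E$; this is true because $E=F_0E$ is a finitely presented (indeed compact projective) generator of $\Gr^\Sigma E$ and the quotient functor $q$ sends it to $\cc O^\Sigma$, with $q$ preserving the relevant colimits — but this point is not needed for the statement as given. So the proof reads: by Lemma~\ref{d2} the category $\QGr^\Sigma E$ is closed symmetric monoidal; by Corollary~\ref{invertible} and Lemma~\ref{bonart} its generators $\{\cc O^\Sigma(-n)\}_{n\in\bb N}$ are strongly dualizable; now apply Lemma~\ref{elfpc}.
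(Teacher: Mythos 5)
Your proposal is correct and matches the paper's proof in substance: the paper cites Corollary~\ref{bonartcor} to get strong dualizability of the generators $\cc O^\Sigma(-n)$ and then applies Lemma~\ref{elfpc}, whereas you unpack Corollary~\ref{bonartcor} one step further into its own ingredients, Corollary~\ref{invertible} and Lemma~\ref{bonart}. This is the same argument with an intermediate reference expanded.
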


\begin{proof}
By Corollary~\ref{bonartcor} each generator $\cc O^\Sigma(-n)$, $n\geq 0$, of $\QGr^\Sigma E$ is
strongly dualizable. The statement now follows from Corollary~\ref{elfpc}.
\end{proof}

We document the results of this section as follows.

\begin{theorem}\label{itogo}
Let $E$ be a commutative graded symmetric $k$-algebra. Then $\QGr^\Sigma E$ is a closed symmetric
monoidal enriched locally finitely presented Grothendieck category with a family of invertible
generators $\{\cc O^\Sigma(n)\}_{n\in\bb Z}$ such that $\cc O^\Sigma(m)\boxtimes\cc O^\Sigma(n)\cong\cc O^\Sigma(m+n)$.
\end{theorem}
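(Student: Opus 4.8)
The plan is to assemble Theorem~\ref{itogo} by quoting the structural results established earlier in this section, so the ``proof'' is essentially a bookkeeping argument that checks every adjective in the statement has already been verified. First I would recall that $\QGr^\Sigma E$ is a Grothendieck category with the family of generators $\{\cc O^\Sigma(-n)\}_{n\in\bb N}$ simply because it is the quotient $\Gr^\Sigma E/\Tors^\Sigma E$ of a Grothendieck category by a localizing subcategory, and the quotient functor $q$ sends the projective generators $F_nE$ to generators. Then Lemma~\ref{d2} supplies the closed symmetric monoidal structure $(\boxtimes,[-,-],\cc O^\Sigma)$ together with the identification of the internal Hom with that of $\Gr^\Sigma E$, and the compatibility $q(X)\boxtimes q(Y)\cong q(X\wedge_E Y)$.

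Next I would invoke Corollary~\ref{invertible}: for each $n\in\bb Z$ the object $\cc O^\Sigma(n)$ is invertible with inverse $\cc O^\Sigma(-n)$, and $\cc O^\Sigma(m)\boxtimes\cc O^\Sigma(n)\cong\cc O^\Sigma(m+n)$, which is exactly the final clause of the theorem. Since $\{\cc O^\Sigma(-n)\}_{n\in\bb N}$ already generate and each $\cc O^\Sigma(n)$ for $n\geq 0$ is invertible (hence an autoequivalence applied to $\cc O^\Sigma$), the enlarged family $\{\cc O^\Sigma(n)\}_{n\in\bb Z}$ is still a family of generators — one should note here that tensoring the given generators by the invertible objects $\cc O^\Sigma(\pm n)$ only shuffles the family and does not shrink the class of objects they detect. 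For the ``enriched locally finitely presented'' assertion I would simply cite Corollary~\ref{bonartcor2}, which is proven from the fact (Corollary~\ref{bonartcor}) that each generator $\cc O^\Sigma(-n)$ is strongly dualizable, via Lemma~\ref{elfpc}.

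The only point requiring a sentence of genuine argument, rather than a citation, is that the full $\bb Z$-indexed family $\{\cc O^\Sigma(n)\}_{n\in\bb Z}$ consists of \emph{invertible} generators in the precise sense demanded: invertibility for each member is Corollary~\ref{invertible}, and being a generating family follows because for $n\geq 0$ we have $\cc O^\Sigma(n)\cong\cc O^\Sigma(-n)^{-1}$, so the family $\{\cc O^\Sigma(n)\}_{n\in\bb Z}$ contains the generating subfamily $\{\cc O^\Sigma(-n)\}_{n\in\bb N}$ and any larger family of objects is again generating. I do not anticipate a real obstacle here; the substance of the theorem lives in Lemma~\ref{d2}, Corollary~\ref{invertible}, and Corollary~\ref{bonartcor2}, and the present proof is a one-paragraph synthesis. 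If anything, the care needed is purely cosmetic: making sure that ``enriched locally finitely presented'' is asserted only after noting that the monoidal unit $\cc O^\Sigma$ need not be assumed finitely presented, so that one uses the general half of Lemma~\ref{elfpc} rather than its refinement.

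\begin{proof}
The category $\QGr^\Sigma E=\Gr^\Sigma E/\Tors^\Sigma E$ is a Grothendieck category, being the Gabriel quotient of the Grothendieck category $\Gr^\Sigma E$ by the localizing subcategory $\Tors^\Sigma E$, and the images $\cc O^\Sigma(-n)=q(F_nE)$, $n\in\bb N$, of the projective generators of $\Gr^\Sigma E$ form a family of generators. By Lemma~\ref{d2} the quadruple $(\QGr^\Sigma E,\boxtimes,[-,-],\cc O^\Sigma)$ is a closed symmetric monoidal category with unit $\cc O^\Sigma$. By Corollary~\ref{invertible}, for every $n\in\bb Z$ the object $\cc O^\Sigma(n)$ is invertible with inverse $\cc O^\Sigma(-n)$, and $\cc O^\Sigma(m)\boxtimes\cc O^\Sigma(n)\cong\cc O^\Sigma(m+n)$ for all $m,n\in\bb Z$. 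Since for $n\geq 0$ the object $\cc O^\Sigma(n)$ is invertible, the family $\{\cc O^\Sigma(n)\}_{n\in\bb Z}$ contains the generating subfamily $\{\cc O^\Sigma(-n)\}_{n\in\bb N}$ and is therefore itself a family of (invertible) generators. Finally, $\QGr^\Sigma E$ is enriched locally finitely presented by Corollary~\ref{bonartcor2}.
\end{proof}
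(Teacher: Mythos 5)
Your proposal is correct and matches the paper's intent exactly: the paper offers no separate argument for Theorem~\ref{itogo}, introducing it only with ``We document the results of this section as follows,'' so the theorem is precisely the synthesis of Lemma~\ref{d2}, Corollary~\ref{invertible}, and Corollary~\ref{bonartcor2} that you assemble. Your extra remark that the $\bb Z$-indexed family generates because it contains the generating subfamily $\{\cc O^\Sigma(-n)\}_{n\in\bb N}$ is a harmless (and correct) piece of bookkeeping the paper leaves implicit.
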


\section{Reconstruction Theorem}\label{reconstrsection}

Let $E$ be a commutative symmetric $k$-algebra which is finitely generated as a non-symmetric graded
$k$-algebra. The pair of adjoint functors
$V:\Gr E\leftrightarrows\Gr^\Sigma E:U$ can be extended to a pair of adjoint functors
      $$F:\QGr E\leftrightarrows\QGr^\Sigma E:U$$
Indeed, by Corollary~\ref{tensorcor} the restriction of the forgetful functor $U:\Gr^\Sigma E\to\Gr E$ 
to $\QGr^\Sigma E$ lands in $\QGr E$. Its right adjoint $F$ takes $X\in\QGr E$ to $q(V(X))$,
where $q:\Gr^\Sigma E\to\QGr^\Sigma E$ is the $\Tors^\Sigma$-localization functor.

We are now in a position to prove the main result of the paper. It is an analog for~\cite[Theorem~4.2.5]{HSS}
reconstructing $S^1$-spectra out of symmetric spectra.

\begin{theorem}[Reconstruction]\label{reconstr}
Suppose $R$ is a commutative finitely generated graded ring with $R_0$
being a $\bb Q$-algebra. Then the functors
   $$F:\QGr R\leftrightarrows\QGr^\Sigma R:U$$
are equivalences of categories and quasi-inverse to each other. Here $R$ is regarded
as a commutative graded symmetric $R_0$-algebra with trivial action of the symmetric groups.
\end{theorem}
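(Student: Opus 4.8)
The plan is to establish that the unit and counit of the adjunction $F \dashv U$ become isomorphisms in $\QGr^\Sigma R$ and $\QGr R$ respectively, which gives the desired equivalence. First I would analyze the counit $FU(M) = q(VU(M)) \to M$ for a $\Tors^\Sigma R$-closed module $M$. Since $V$ is defined on projective generators by $E(-n) \mapsto F_n E$ and $U$ is the forgetful functor, the composite $VU$ applied to a free module $R(-n)$ produces $F_n R$, whereas $F_n R$ and its shifts are related to the suspension modules of Lemma~\ref{doroga}. The key technical input will be that for a module of the form $R(-n)$, the comparison between $VU(R(-n)) = F_n R$ and $R(-n)$ itself (viewed symmetrically as $R[n] \wedge_R F_n R$ after shifting, via the maps $a_n$ of~\eqref{strelka}) has kernel and cokernel built from $\kr a_n$, $\coker a_n$, hence lands in $\Tors^\Sigma R$. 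Because $R$ has trivial symmetric group action, these discrepancies are governed by the homology of the symmetric groups acting on tensor powers, and this is precisely where the hypothesis that $R_0$ is a $\bb Q$-algebra enters: over a $\bb Q$-algebra the higher group homology $H_{>0}(\Sigma_m; -)$ vanishes, so the norm maps are isomorphisms and the relevant kernels/cokernels of $a_n$ are torsion in the symmetric sense.

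Next I would verify the unit $X \to UF(X) = U q V(X)$ is a stable equivalence in $\QGr R$ for every $X \in \QGr R$. By Lemma~\ref{tensorlem} and Corollary~\ref{tensorcor}, $V$ preserves stable equivalences and $U$ of a $\Tors^\Sigma R$-closed module is $\Tors R$-closed, so it suffices to check this on the generators $R(-n)$ of $\Gr R$ before localizing: one must show $R(-n) \to U q V(R(-n)) = Uq(F_n R)$ has kernel and cokernel in $\Tors R$. Using that $q(F_n R)$ is the $\Tors^\Sigma R$-closure of $F_n R$ and applying $U$, together with Proposition~\ref{tensorcor}, reduces this to a computation of $U(F_n R)$ versus $R(-n)$: the module $U(F_n R)$ in degree $j$ is $\bb Z\Sigma_j \otimes_{\bb Z\Sigma_{j-n}} R_{j-n}$, i.e.\ an induced module, and the natural comparison to $R_{j-n} = R(-n)_j$ is split after inverting the order of $\Sigma_j/\Sigma_{j-n}$ — again the $\bb Q$-algebra hypothesis on $R_0$ makes this comparison a $\Tors R$-equivalence by Corollary~\ref{vesmacor}, since the discrepancy is bounded in each relevant truncation.

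Having shown both unit and counit are invertible on generators (hence everywhere, since both functors are exact and preserve direct limits — $U$ trivially, $F = qV$ as a composite of a left adjoint and an exact quotient functor), I would conclude that $F$ and $U$ restrict to quasi-inverse equivalences between $\QGr R$ and $\QGr^\Sigma R$. I expect the main obstacle to be the precise homological bookkeeping in the counit step: one needs to control not just $\kr a_n$ and $\coker a_n$ but to show the smallest sh-localizing subcategory they generate (which is $\Tors^\Sigma R$ by definition) actually captures the full discrepancy between $VU(M)$ and $M$ for arbitrary $\Tors^\Sigma R$-closed $M$, and this requires a careful dévissage reducing a general $M$ to the suspension modules of Lemma~\ref{doroga} where the explicit structure $(M_n, M_n R_1, M_n R_2, \ldots)$ is available. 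The role of characteristic zero is to guarantee that the relevant transfer/norm maps $R_m \to (R^{\otimes m})^{\Sigma_m}$ or their analogues are isomorphisms, so that the trivial-action symmetric algebra $R$ behaves, after symmetric localization, exactly like the classical graded ring.
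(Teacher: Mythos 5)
Your high-level skeleton is close to the paper's: reduce everything to checking the unit, exploit the $\mathbb Q$-algebra hypothesis via representation theory of finite groups, and do a d\'evissage to suspension modules. But two of your concrete steps have genuine gaps.

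First, the precise role of characteristic zero. The paper's key input is not a group-homology vanishing statement but Maschke's theorem, used to show that an injective $Q\in\Gr R$, viewed as a symmetric module with trivial $\Sigma$-action, is \emph{injective in $\Gr^\Sigma R$}: a monomorphism $i\colon M\hookrightarrow N$ in $\Gr^\Sigma R$ induces $M/\Sigma\to N/\Sigma$, and because every $M_n$ is a rational $\Sigma_n$-representation, $i$ is degreewise split, so $M/\Sigma\to N/\Sigma$ is again a monomorphism and one can extend along it. Combined with Proposition~\ref{vesma}, this gives that a torsionfree injective of $\Gr R$ is \emph{stably fibrant} in $\Gr^\Sigma R$ — that is the bridge between $\Tors R$-closure and $\Tors^\Sigma R$-closure, and it is what your ``norm-map'' heuristic needs to become.

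Second, and more seriously, your concrete verification on generators does not work as stated. You claim that the comparison of $U(F_nR)$ with $R(-n)$ is already a $\Tors R$-equivalence because the inclusion $R_{j-n}\hookrightarrow R_0\Sigma_j\otimes_{R_0\Sigma_{j-n}}R_{j-n}$ is split. Splitness gives you nothing here: the cokernel consists of $[\Sigma_j:\Sigma_{j-n}]-1$ copies of $R_{j-n}$ in each degree $j$, which is unbounded and certainly not in $\Tors R$. The discrepancy between $F_nR$ and $R(-n)$ is killed only after passing to $\Tors^\Sigma R$-localization, and establishing this requires exactly the paper's d\'evissage. The paper avoids the bad comparison $R(-n)\to U(F_nR)$ altogether by instead showing $UV(\Sigma^\infty_RM_{Nn})=\Sigma^\infty_RM_{Nn}$ \emph{on the nose} for suspension modules (here the extra copies produced by $V$ are absorbed by the relations via the counit $F_{n_j}R\twoheadrightarrow R(-n_j)$ factoring $\kappa$ through $\rho$), identifies the $\Tors R$-envelope of a suspension module with its image in $\QGr^\Sigma R$ using the stably-fibrant coresolution, and only then uses twisting by $\cc O^\Sigma(\pm n)$, truncations $L_{Nn}$, and a colimit argument (needing the finite-type statement of Lemma~\ref{fintype} to check that $\sum F(L_{Nn})$ is $\Tors^\Sigma R$-closed). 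Your proposal acknowledges that a d\'evissage to Lemma~\ref{doroga}'s suspension modules is needed, but the argument as sketched would still hinge on the false claim about $U(F_nR)$, so the proof would not go through without substituting the paper's comparison.
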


\begin{proof}
It is enough to show that the adjunction unit and counit morphisms
   $$\eta_X:X\to UV(X)\to U(q(V(X)))=FU(X),\quad \epsilon_Y:FU(Y)=q(V(U(Y)))\to Y$$
are isomorphisms for any $X\in\QGr R$ and $Y\in\QGr^\Sigma R$. 
Note that $\epsilon_Y$ is the unique morphism corresponding to the adjunction counit map $VU(Y)\to Y$
for the adjoint functors $(V,U)$.
It is an isomorphism in $\QGr^\Sigma R$ if and only if $U(\epsilon_Y)$ is an isomorphism in $\QGr R$.
As the composite map
   $$U(Y)\xrightarrow{\eta_{U(Y)}}UFU(Y)\xrightarrow{U(\epsilon_Y)}U(Y)$$
equals $\id_{U(Y)}$, it will be enough to verify that $\eta_X$ is a stable equivalence for all $X\in\Gr R$.

Suppose $Q$ is injective in $\Gr R$. Regarding $Q$ as a symmetric $R$-module with trivial action
(see Definition~\ref{emodule}),
we claim that it is injective in $\Gr^\Sigma R$. Indeed, let $i:M\hookrightarrow N$ be a monomorphism in
$\Gr^\Sigma R$, and let $f:M\to Q$ be a map of symmetric modules. Then $f$ factors through
$M/\Sigma:=(M_0,M_1/\Sigma_1,M_2/\Sigma_2,\ldots)$, where 
$M_n/\Sigma_n=M_n/\langle m-\sigma(m)\rangle_{m\in M_n,\sigma\in\Sigma_n}$. Note that
$M/\Sigma\in\Gr^\Sigma R$, where symmetric groups act trivially on $M/\Sigma$.
As $R$ is a $\bb Q$-algebra by assumption, the map $i$ is a degreewise split 
monomorphism of rational representations of symmetric groups
due to Maschke's Theorem. It follows that the induced map $i/\Sigma:M/\Sigma\to N/\Sigma$
is a degreewise monomorphism of graded $R$-modules. 
As every morphism $M/\Sigma\to Q$ is extended to $N/\Sigma$,
it follows that $f$ is extended to $N$, and hence $Q$ is an injective symmetric module as claimed.

Next, if $Q$ is torsionfree in $\Gr R$, then it is stably fibrant in $\Gr^\Sigma R$. Indeed, $R/\Sigma=R$
due to trivial action of the symmetric groups. Therefore
   $$[a_n,Q]:Q\to[R[n]\wedge_R F_nR,Q]$$
induced by the map~\eqref{strelka} is isomorphic to $Q\to\uhom_R(R_{\geq n},Q)$. By Proposition~\ref{vesma}
the latter arrow is an isomorphism. We see that $Q$ is stably fibrant.

Suppose $R_{\geq 1}$ is generated by $r_1,\ldots,r_n$. Let $N=\max\{\deg(r_1),\ldots,\deg(r_n)\}$.
Every graded $R$-module $M\in\Gr R$ can be written as a directed union $\sum_{n\geq 0}L_{Nn}$ with
   $$L_{Nn}=(M_0,\ldots,M_{Nn},M_{Nn}R_1,M_{Nn}R_2,\ldots).$$
Denote by $L_{\geq Nn}=(0,\ldots,0,M_{Nn},M_{Nn}R_1,M_{Nn}R_2,\ldots)$. Then $L_{\geq Nn}$
is a submodule of $L_{Nn}$ and the inclusion $L_{\geq Nn}\hookrightarrow L_{Nn}$ is a stable equivalence
due to the fact that $L_{Nn}/L_{\geq Nn}$ is bounded (and hence torsion). The $Nn$-th shift $L_{\geq Nn}(Nn)$
of $L_{\geq Nn}$ is the suspension graded $R$-module $\Sigma^\infty_RM_{Nn}:=(M_{Nn},M_{Nn}R_1,M_{Nn}R_2,\ldots)$
of the $R_0$-module $M_{Nn}$ and equals $L_{Nn}(Nn)$. 
One has an exact sequence of graded $R$-modules
   $$\oplus_{j\in J}R(-n_j)\lra{\rho}\oplus_IR\to\Sigma^\infty_RM_{Nn}\to 0.$$
As $V$ is left adjoint to $U$, the latter exact sequence is mapped to an exact sequence of symmetric $R$-modules
   $$\oplus_{j\in J}F_{n_j}R\lra{\kappa}\oplus_IR\to V(\Sigma^\infty_RM_{Nn})\to 0.$$
Regarding each summand $R(-n_j)$ as a symmetric $R$-module with trivial action of the symmetric groups, the unit adjunction map
$F_{n_j}R=VU(R(-n_j))\to R(-n_j)$ is an epimorphism. 
It follows that $\kappa$ factors as $\oplus_{j\in J}F_{n_j}R\twoheadrightarrow \oplus_{j\in J}R(-n_j)\lra{\rho}\oplus_IR$. 
Since $U$ is exact, we see that $UV(\Sigma^\infty_RM_{Nn})=\Sigma^\infty_RM_{Nn}$. 

Consider a $\Tors R$-envelope $\lambda:\Sigma^\infty_RM_{Nn}\to (\Sigma^\infty_RM_{Nn})_{\Tors R}$ in $\Gr R$.
There is an exact sequence
   $$0\to (\Sigma^\infty_RM_{Nn})_{\Tors R}\to Q_1\to Q_2$$
with $Q_1,Q_2$ being injective torsionfree. Regarding this exact sequence as an exact sequence in $\Gr^\Sigma R$,
it follows that $(\Sigma^\infty_RM_{Nn})_{\Tors R}=q(\Sigma^\infty_RM_{Nn})\in\QGr^\Sigma R$ as $Q_1,Q_2\in\QGr^\Sigma R$. Regarding $\lambda$
as a map of symmetric $R$-modules, it follows that $\lambda$ is a stable equivalence in $\Gr^\Sigma R$, because
$\kr\lambda,\coker\lambda\in\Tors^\Sigma R$.
We see that $V(\Sigma^\infty_RM_{Nn})=\Sigma^\infty_RM_{Nn}\to q(\Sigma^\infty_RM_{Nn})$ is a stable equivalence in $\Gr^\Sigma R$,
and so 
   $$\eta:\Sigma^\infty_RM_{Nn}\lra{\id}UV(\Sigma^\infty_RM_{Nn})\lra{\lambda} U(q(\Sigma^\infty_RM_{Nn}))=FU(\Sigma^\infty_RM_{Nn})$$ 
is a stable equivalence in $\Gr R$.

Next, $L_{\geq Nn}=\Sigma^\infty_RM_{Nn}\otimes_RR(-n)$. Therefore,
   $$V(L_{\geq Nn})=V(\Sigma^\infty_RM_{Nn}\otimes_RR(-n))=\Sigma^\infty_RM_{Nn}\wedge_RF_nR$$
and
   $$q(\Sigma^\infty_RM_{Nn}\wedge_RF_nR)=q(\Sigma^\infty_RM_{Nn})\boxtimes\cc O^\Sigma(-n)=q(\Sigma^\infty_RM_{Nn})(-n).$$
Using Corollary~\ref{bonartcor}, the exact sequence in $\QGr^\Sigma R$
   $$0\to q(\Sigma^\infty_RM_{Nn})(-n)\to Q_1(-n)\to Q_2(-n)$$
is isomorphic to an exact sequence
   $$0\to [\cc O^\Sigma(n),q(\Sigma^\infty_RM_{Nn})]\to[\cc O^\Sigma(n),Q_1]\to[\cc O^\Sigma(n),Q_2].$$
Using Lemma~\ref{d2} the latter exact sequence is isomorphic to an exact sequence
   $$0\to\uhom_R(R[n],q(\Sigma^\infty_RM_{Nn}))\to\uhom_R(R[n],Q_1)\to\uhom_R(R[n],Q_2).$$
The graded $R$-module $\uhom_R(R[n],q(\Sigma^\infty_RM_{Nn}))$ is isomorphic to 
$U(q(\Sigma^\infty_RM_{Nn})(-n))$. Thus
   $$\eta:L_{\geq Nn}\to UV(L_{\geq Nn})\to U(q(V(L_{\geq Nn})))=UF(L_{\geq Nn})$$
is a stable equivalence in $\Gr R$. As $L_{\geq Nn}\hookrightarrow L_{Nn}$ is a stable equivalence in $\Gr R$,
so is the composite map
   $$L_{Nn}\to UV(L_{Nn})\to U(q(V(L_{Nn})))=UF(L_{Nn}).$$
We also use Lemma~\ref{tensorlem} here. We see that the symmetric module 
$F(L_{Nn})=q(V(L_{Nn}))$ is isomorphic to $\uhom_R(R[n],q(\Sigma^\infty_RM_{Nn}))\cong(L_{Nn})_{\Tors R}$
regarded as a symmetric $R$-module. Since $L_{Nn}\hookrightarrow L_{N(n+1)}$ is a monomorphism, so are
$UF(L_{Nn})\hookrightarrow UF(L_{N(n+1)})$ and $F(L_{Nn})\hookrightarrow F(L_{N(n+1)})$
due to to the fact that the $\Tors R$-localization functor preservers monomorphisms.

We claim that the map
   $$\mu:\sum_{n\geq 0} F(L_{Nn})\to F(\sum_{n\geq 0} L_{Nn})=F(M)$$
is an isomorphism in $\Gr^\Sigma R$. For this it is enough to check that 
   $$\sum_{n\geq 0} F(L_{Nn})=\colim(\cdots\hookrightarrow F(L_{Nn})
       \hookrightarrow F(L_{N(n+1)})\hookrightarrow\cdots)$$
is $\Tors^\Sigma E$-closed. The tower of the colimit is isomorphic to a tower of monomorphisms
   $$\cdots\hookrightarrow(L_{Nn})_{\Tors R}
       \hookrightarrow (L_{N(n+1)})_{\Tors R}\hookrightarrow\cdots$$
Therefore $\sum_{n\geq 0} F(L_{Nn})\cong\sum_{n\geq 0} (L_{Nn})_{\Tors R}$. As $U$ respects colimits and
$\Tors R$ is a torsion theory of finite type by Lemma~\ref{fintype}, it follows that $\sum_{n\geq 0} (L_{Nn})_{\Tors R}$ is
$\Tors R$-closed. Thus it is a kernel of a map between $\Omega$-modules $Q\to Q'$. Regarding this map
in $\Gr^\Sigma R$, we see that $\sum_{n\geq 0} (L_{Nn})_{\Tors R}$ is $\Tors^\Sigma E$-closed,
and hence so is $\sum_{n\geq 0} F(L_{Nn})$ as claimed.

We conclude that the bottom map in the commutative square
   $$\xymatrix{\sum L_{Nn}\ar@{=}[d]\ar[r]&\sum U(q(V(L_{Nn})))=\sum UF(L_{Nn})\ar[d]^{U(\mu)}\\
                       M\ar[r] &U(q(V(M)))=UF(M)}$$
is a stable equivalence in $\Gr R$ due to the fact that stable equivalences are closed under direct limits. 
This completes the proof of the theorem.
\end{proof}

A theorem of Serre~\cite{Se}, \cite[Proposition~30.14.4]{Stack} together 
with Theorems~\ref{equivqgr} and~\ref{reconstr} imply the following result.

\begin{corollary}
Let $X=\Proj(R)$ be the projective scheme associated with a commutative graded ring $R$ with $R_0$ being a Noetherian
$\bb Q$-algebra. Suppose $R$ is generated by finitely many elements of $R_1$.
Then the composite functor
   $$\Qcoh X\lra{\Gamma_*}\QGr^{\bb Z}R\lra{I}\QGr R\lra{F}\QGr^\Sigma R$$
is an equivalence of categories, where $\Gamma_*(\cc F)=\bigoplus_{d=-\infty}^{+\infty}H^0(X,\cc F\otimes\cc O(d))$.
\end{corollary}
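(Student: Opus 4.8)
The plan is to read the displayed composite as a concatenation of three functors, each of which is an equivalence of categories in the situation at hand, and then to invoke the elementary fact that a composite of equivalences is again an equivalence. Thus the only real content is to check that the hypotheses of the corollary feed the hypotheses of the three relevant results.

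First I would do the bookkeeping. Since $R$ is generated over $R_0$ by finitely many elements of $R_1$, the homogeneous ideal $R_+=R_{\geq 1}$ is finitely generated; this is exactly the standing assumption of Section~\ref{dubois}, so Theorem~\ref{equivqgr} applies with $A=R$ and $I\colon\QGr^{\bb Z}R\to\QGr R$ is an equivalence, with quasi-inverse $\iota$. Moreover $R$ is a commutative finitely generated graded ring and $R_0$, being a Noetherian $\bb Q$-algebra, is in particular a $\bb Q$-algebra; regarding $R$ as a commutative graded symmetric $R_0$-algebra with trivial action of the symmetric groups, Theorem~\ref{reconstr} applies and $F\colon\QGr R\to\QGr^\Sigma R$ is an equivalence. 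Finally, $R_0$ is Noetherian and $R$ is generated in degree one over $R_0$, which is precisely the setting of Serre's reconstruction theorem \cite{Se}, \cite[Proposition~30.14.4]{Stack}: the section functor $\cc F\mapsto\bigoplus_{d=-\infty}^{+\infty}H^0(X,\cc F\otimes\cc O(d))$ into $\Gr^{\bb Z}R$, followed by the exact quotient $Q\colon\Gr^{\bb Z}R\to\QGr^{\bb Z}R$, is an equivalence $\Qcoh X\simeq\QGr^{\bb Z}R$, with quasi-inverse induced by sheafification $M\mapsto\widetilde M$.

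Composing these three equivalences in the order $\Gamma_*$, $I$, $F$ then yields the asserted equivalence $\Qcoh X\simeq\QGr^\Sigma R$. The one point I would spell out carefully is the matching of conventions: Serre's statement is naturally phrased with $\bb Z$-graded modules modulo torsion, Theorem~\ref{equivqgr} bridges the $\bb Z$-graded and $\bb N$-graded quotient categories, and the ``$\Gamma_*$'' of the corollary is literally $Q\circ\Gamma_*$; also, the degree-one generation hypothesis is exactly what makes $\cc O(1)$ invertible on $\Proj R$ and hence turns Serre's comparison into an equivalence rather than merely a localization onto a full subcategory. Beyond this there is no genuine obstacle --- the substance is entirely contained in Theorem~\ref{reconstr}, and the corollary is just its assembly with Serre's theorem and Theorem~\ref{equivqgr}.
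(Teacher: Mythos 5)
Your proposal is correct and is exactly the argument the paper intends: the corollary is stated without further proof, citing Serre's theorem together with Theorems~\ref{equivqgr} and~\ref{reconstr}, i.e. it is precisely the observation that the displayed composite is a composition of three equivalences whose hypotheses are supplied by the hypotheses of the corollary. Your careful bookkeeping of the hypotheses and of the $\bb Z$-graded versus $\bb N$-graded conventions is a helpful elaboration of what the paper leaves implicit.
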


We should not expect the Reconstruction Theorem for non-commutative algebras even for the tensor algebra $T(V)$
of a finite dimensional vector space $V$ in contrast with the same reconstruction result in stable homotopy theory,
where $V$, say, is the unit circle or a motivic sphere. A reason why such a result is possible in homotopy theory
(see, e.g.,~\cite[Section~10]{H}) is that there is a homotopy from the cyclic permutation on $V\otimes V\otimes V$
to the identity. In our situation the category $\QGr^\Sigma T(V)$ can equivalently 
be defined as the category of symmetric ``$V$-spectra" but the cyclic permutation is not the identity
(we do not have any homotopies here). Therefore there is no reason to have the same result reconstructing
$\QGr T(V)$ from $\QGr^\Sigma T(V)$.

In Theorem~\ref{reconstr} we use Maschke's Theorem for finite group representations, hence our assumption on the
characteristic. Nevertheless, the author believes in a reasonable version of this theorem in positive characteristic.
He invites the interested reader to attack this problem.

\section{Symmetric projective schemes}\label{sectionproj}

Throughout this section $E$ is a commutative symmetric graded $k$-algebra (with $E_{\geq 1}$ not necessarily finitely generated)
such that $E_0=k$. As an application of the previous sections, we 
introduce and study symmetric projective schemes associated with commutative symmetric graded $k$-algebras.
They extend the classical projective schemes and recover them in the commutative case. We start with preparations.

We say that $M\in\Gr^\Sigma E$ is 
{\it finitely generated\/} if the functor $\Gr^\Sigma E(M,-)$ respects directed unions. 
If $M\in\Gr^\Sigma E$ and $N$ is a non-symmetric graded submodule of $M$ 
(i.e. $N$ is a subobject of $M$ in $\Gr E$), the
{\it $\Sigma$-closure of $N$ in $M$} is
   $$N^\Sigma:=\{\sigma_1(n_1)+\cdots+\sigma_k(n_k)\mid n_1,\ldots,n_k \textrm{ are homogenious elements of $N$},
       \sigma_1\in\Sigma_{\deg n_1},\ldots,\sigma_k\in\Sigma_{\deg n_k}\}.$$
       
\begin{lemma}\label{fingen}
$N^\Sigma$ is a subobject of $M$ in $\Gr^\Sigma E$. If $N$ is finitely generated in $\Gr E$, then 
$N^\Sigma$ is finitely generated in $\Gr^\Sigma E$.
\end{lemma}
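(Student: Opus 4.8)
The plan is to verify the two assertions separately. First I would show that $N^\Sigma$ is a subobject of $M$ in $\Gr^\Sigma E$, i.e. that it is closed under the $E$-action and under the symmetric group actions in each degree. Closure under $\Sigma_n$ in degree $n$ is immediate from the definition, since $N^\Sigma$ is by construction spanned by elements $\sigma(n)$ with $\sigma\in\Sigma_{\deg n}$ and $n\in N$ homogeneous; applying a further permutation only reshuffles the $\sigma_i$'s. For the $E$-action, take a homogeneous element $\sigma(n)\in N^\Sigma$ with $n\in N_p$, $\sigma\in\Sigma_p$, and $x\in E_q$; I must show $\sigma(n)\cdot x\in N^\Sigma$. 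Since the action map $\alpha_{p,q}:M_p\otimes E_q\to M_{p+q}$ is $\Sigma_p\times\Sigma_q$-equivariant, we have $\sigma(n)\cdot x=\alpha_{p,q}((\sigma\times 1)(n\otimes x))=(\sigma\times 1)\cdot(n\cdot x)$, and $n\cdot x\in N_{p+q}$ because $N$ is a non-symmetric graded submodule. Hence $\sigma(n)\cdot x=(\sigma\times 1)(n\cdot x)$ is of the required form (viewing $\sigma\times 1\in\Sigma_{p+q}$), so it lies in $N^\Sigma$. Additivity of $N^\Sigma$ is built into its definition, so $N^\Sigma$ is a graded $k\Sigma_{\bullet}$-submodule closed under the $E$-action, i.e. a subobject in $\Gr^\Sigma E$.

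For the second assertion, suppose $N$ is finitely generated in $\Gr E$, say generated by homogeneous elements $n_1,\ldots,n_r$ with $n_i\in N_{p_i}$. I claim $N^\Sigma$ is generated, as a symmetric $E$-module, by the finite set $\{\sigma(n_i)\mid 1\le i\le r,\ \sigma\in\Sigma_{p_i}\}$, which is finite since each $\Sigma_{p_i}$ is. Let $P$ be the symmetric submodule of $M$ they generate; clearly $P\subseteq N^\Sigma$. Conversely, a general homogeneous element of $N^\Sigma$ is a sum of terms $\tau(m)$ with $m\in N$ homogeneous and $\tau$ a permutation of the appropriate degree; writing $m=\sum_i n_i\cdot x_i$ with $x_i\in E$ homogeneous (using that the $n_i$ generate $N$ over $E$) and using equivariance of the action as in the first part, $\tau(m)=\sum_i\tau\cdot(n_i\cdot x_i)$ and each $\tau\cdot(n_i\cdot x_i)$ can be rewritten — again via $\Sigma_{p_i}\times\Sigma_{q_i}$-equivariance of the action map — as $(\rho_i(n_i))\cdot(\rho'_i(x_i))$ for suitable $\rho_i\in\Sigma_{p_i}$ (after absorbing the rest of $\tau$ appropriately). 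A cleaner way to organize this: decompose $\tau\in\Sigma_{p_i+q_i}$ into cosets of $\Sigma_{p_i}\times\Sigma_{q_i}$ and use that $k\Sigma_{p_i+q_i}$ is generated over $k(\Sigma_{p_i}\times\Sigma_{q_i})$ by coset representatives, together with the fact that the action map already accounts for the "$E$-side" via multiplication by $\rho'_i(x_i)\in E$. The upshot is that every element of $N^\Sigma$ lies in the $E$-submodule generated by the finitely many $\sigma(n_i)$, so $N^\Sigma\subseteq P$ and therefore $N^\Sigma=P$ is finitely generated in $\Gr^\Sigma E$.

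The main obstacle is the bookkeeping in the second part: one must be careful that, when pushing a permutation $\tau$ through the action map, the resulting permutation of the "$N$-part" lies in the finite group $\Sigma_{p_i}$ rather than some larger symmetric group, so that only finitely many $\sigma(n_i)$ are needed as generators. This is exactly where the $\Sigma_n\times\Sigma_m$-equivariance of the action maps $\alpha_{n,m}$ (Definition~\ref{emodule}) does the work, letting one split $\tau$ as a product of a block permutation coming from a coset representative and an element of $\Sigma_{p_i}\times\Sigma_{q_i}$; the $\Sigma_{p_i}$-component stays with $n_i$ and the $\Sigma_{q_i}$-component and coset part get absorbed into the $E$-factor, which is harmless since the $E$-submodule generated by a set is closed under the full $E$-action. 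Once this coset argument is set up carefully, finiteness of the generating set is clear and the proof is complete.
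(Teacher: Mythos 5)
Your treatment of the first assertion (that $N^\Sigma$ is a subobject in $\Gr^\Sigma E$) is correct and in the same spirit as the paper, which writes out the action of $\Sigma_t$ and of $E_\ell$ on $N^\Sigma_t$ explicitly; you appeal to the same $\Sigma_p\times\Sigma_q$-equivariance.

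For the second assertion the paper takes a completely different, and much shorter, route: it uses the characterization of ``finitely generated'' in a Grothendieck category as $\Hom(N^\Sigma,-)$ preserving directed unions, together with the fact that the forgetful functor $U:\Gr^\Sigma E\to\Gr E$ preserves colimits. Given $f:N^\Sigma\to\sum_I L_i$, the restriction of $f$ to $N$ factors through some $L_i$ by finite generation of $N$ in $\Gr E$, and then $f$ itself factors through $L_i$ because $N^\Sigma$ is the $\Sigma$-closure of $N$ and $L_i$ is a symmetric submodule. This is a one-line argument with no element-level bookkeeping. Your approach --- exhibiting $N^\Sigma$ as the image of a finite sum $\bigoplus_{i=1}^r F_{p_i}E\to M$ --- is a legitimate alternative and would give a valid proof. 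However, the specific bookkeeping you propose is wrong. You claim that $\tau(n_ix_i)$ can be rewritten as $\rho_i(n_i)\cdot\rho'_i(x_i)$ with $\rho_i\in\Sigma_{p_i}$, $\rho'_i\in\Sigma_{q_i}$, ``after absorbing the rest of $\tau$''. This is false: a coset decomposition writes $\tau=c\cdot(\rho\times\rho')$ with $c$ a coset representative in $\Sigma_{p_i+q_i}$, and $c$ cannot be pushed into $\Sigma_{p_i}$; the block permutations $\rho\times\rho'$ with $\rho\in\Sigma_{p_i}$ do not generate $\Sigma_{p_i+q_i}$. The point you are missing is that none of this is needed. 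The symmetric submodule $P$ of $M$ generated by $\{n_1,\ldots,n_r\}$ (equivalently, the image of $\bigoplus F_{p_i}E\to M$) is automatically closed under the action of the full symmetric groups in each degree; since the $n_i$ generate $N$ over $E$, $P$ contains $N$, hence $P$ contains $N^\Sigma$ simply by $\Sigma$-closedness, with no factorization of $\tau$ required. Conversely $P\subseteq N^\Sigma$ because $N^\Sigma$ is a symmetric submodule containing the $n_i$. Then $N^\Sigma=P$ is a quotient of the finitely generated object $\bigoplus_{i=1}^r F_{p_i}E$, hence finitely generated. With this simplification your route works, though it is heavier than the paper's categorical argument; if you keep your approach, delete the coset/absorption discussion and replace it by the trivial observation above.
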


\begin{proof}
Clearly, $N^\Sigma$ is closed under addition. We have $N^\Sigma=\bigoplus_{t\in\bb N}N^\Sigma_t$, where
   $$N^\Sigma_t=\{\tau_1(n_1)+\cdots+\tau_s(n_s)\mid\tau_i\in\Sigma_t,n_i\in N_t\}.$$
By definition, $\Sigma_t$ acts on $N_t^\Sigma$ by the rule:
   $$\sigma(\tau_1(n_1)+\cdots+\tau_s(n_s)):=(\sigma\tau_1)(n_1)+\cdots+(\sigma\tau_s)(n_s),\quad\sigma\in\Sigma_t.$$
If $x\in E_\ell$ then
   $$(\tau_1(n_1)+\cdots+\tau_s(n_s))\cdot x:=(\tau_1\times 1)(n_1x)+\cdots+(\tau_s\times 1)(n_sx)\in N^\Sigma_{t+\ell}.$$
This induces a pairing $N^\Sigma_t\otimes E_{\ell}\to N^\Sigma_{t+\ell}$ which is clearly $\Sigma_t\times\Sigma_\ell$-equivariant.
Thus $N^\Sigma$ is a symmetric right $E$-module.

Suppose $N$ is finitely generated in $\Gr E$. Consider a map $f:N^\Sigma\to\sum_I L_i$ in $\Gr^\Sigma E$.
As the forgetful functor $U:\Gr^\Sigma E\to\Gr E$ respects all colimits, the restriction of $f$ to $N$ factors through
some $L_i$. It follows that $f$ factors through $L_i$, and hence $N^\Sigma$ is finitely generated.
\end{proof}

Suppose $N$ is a finitely generated symmetric submodule of $M$. Then there is an epimorphism
$\kappa:\bigoplus_{i=1}^u F_{\ell_i}E\twoheadrightarrow N$ corresponding to a morphism of non-symmetric graded modules
$\lambda:\bigoplus_{i=1}^u E(-\ell_i)\to N$. Denote the image of $\lambda$ by $L$.

\begin{corollary}\label{carbery}
$N$ is the $\Sigma$-closure of $L$.
\end{corollary}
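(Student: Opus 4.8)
The plan is to prove the two inclusions $L^\Sigma\subseteq N$ and $N\subseteq L^\Sigma$ separately, using the adjunction $V\colon\Gr E\rightleftarrows\Gr^\Sigma E\colon U$ together with the fact that $\kappa$ is an epimorphism.

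First I would dispose of $L^\Sigma\subseteq N$. By Lemma~\ref{fingen} the set $L^\Sigma$ is a subobject of $M$ in $\Gr^\Sigma E$ (and is finitely generated there, since $L$, being a quotient of $\bigoplus_{i=1}^u E(-\ell_i)$, is finitely generated in $\Gr E$). Now $N$ is itself a symmetric submodule of $M$, hence stable under the $\Sigma_t$-actions in every degree $t$, and $L=\im\lambda$ consists of elements of $N$; therefore every element $\sigma_1(n_1)+\cdots+\sigma_k(n_k)$ of $L^\Sigma$ with homogeneous $n_j\in L$ and $\sigma_j\in\Sigma_{\deg n_j}$ already lies in $N$. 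Consequently $L^\Sigma$ is a subobject of $N$ in $\Gr^\Sigma E$; write $\iota\colon L^\Sigma\hookrightarrow N$ for the inclusion.

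For $N\subseteq L^\Sigma$, the key point is that $\kappa$ factors through $\iota$. Straight from the definition of the $\Sigma$-closure (one summand, identity permutation) one has $L\subseteq L^\Sigma$ as non-symmetric submodules of $M$, so $\lambda\colon\bigoplus_{i=1}^u E(-\ell_i)\to N$, whose image is $L$, corestricts to a morphism $\lambda'\colon\bigoplus_{i=1}^u E(-\ell_i)\to U(L^\Sigma)$ in $\Gr E$ with $U(\iota)\circ\lambda'=\lambda$. Let $\kappa'\colon\bigoplus_{i=1}^u F_{\ell_i}E\to L^\Sigma$ be the morphism in $\Gr^\Sigma E$ adjoint to $\lambda'$. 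Naturality of the adjunction isomorphism in the target variable, applied to $\iota$, shows that $\iota\circ\kappa'$ is the morphism adjoint to $U(\iota)\circ\lambda'=\lambda$; but by hypothesis $\kappa$ is the morphism adjoint to $\lambda$, so $\kappa=\iota\circ\kappa'$. Since $\kappa$ is an epimorphism, $N=\im\kappa=\im(\iota\circ\kappa')\subseteq\im\iota=L^\Sigma$. Combining the two inclusions gives $N=L^\Sigma$.

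I do not expect a genuine obstacle here; the argument is essentially a chase through the adjunction, and the only points needing a little care are the elementary inclusion $L\subseteq L^\Sigma$ (read off from the definition of the $\Sigma$-closure), the fact that $L^\Sigma$ is indeed a subobject in $\Gr^\Sigma E$ lying inside $N$, and the bookkeeping of naturality. For a reader who prefers an explicit computation one can instead set $a_i:=\kappa(g_{\ell_i})\in N_{\ell_i}$, where $g_{\ell_i}$ is the canonical generator of $F_{\ell_i}E$; then $E$-linearity and $\Sigma$-equivariance of $\kappa$ give $\kappa_t(\tau\otimes x)=\tau(a_ix)$ for $\tau\in\Sigma_t$ and $x\in E_{t-\ell_i}$, whence $L_t=\sum_i a_iE_{t-\ell_i}$ and, by surjectivity of $\kappa_t$, $N_t=\sum_i k\Sigma_t\cdot(a_iE_{t-\ell_i})=k\Sigma_t\cdot L_t=L^\Sigma_t$.
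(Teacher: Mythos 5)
Your proof is correct, and it is essentially the argument the paper leaves implicit: the paper states Corollary~\ref{carbery} without proof, immediately after setting up the adjunction $V\dashv U$, Lemma~\ref{fingen}, and Remark~\ref{lingren} (that $F_nE$ is the $\Sigma$-closure of $E(-n)$), expecting the reader to chase exactly the diagram you chase. Both your adjunction argument and your explicit degreewise computation $N_t=k\Sigma_t\cdot L_t=L^\Sigma_t$ are valid fillings-in of that gap.
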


\begin{remark}\label{lingren}
Recall that $F_nE=(0,\bl{n-1}\ldots,0,k\Sigma_n\otimes E_0,k\Sigma_{n+1}\otimes_{k\Sigma_1} E_1,k\Sigma_{n+2}\otimes_{k\Sigma_2} E_2,\ldots)$.
Then $E(-n)=(0,\bl{n-1}\ldots,0,E_0,E_1,E_2,\ldots)$ is a non-symmetric submodule of $F_nE$, where each $x\in E_{n+m}$ is regarded as the element
$(1,x)\in k\Sigma_{n+m}\otimes_{k\Sigma_m} E_m$. One has that $F_nE$ is the $\Sigma$-closure of $E(-n)$.
\end{remark}

A {\it $\Sigma$-ideal of $E$\/} is a subobject $I$ of $E$ in $\Gr^\Sigma E$. We say that a $\Sigma$-ideal $I$ is {\it principal\/}
if it is the $\Sigma$-closure of the (non-symmetric) right ideal $xE$ for some homogeneous $x\in E$. In this case
we write $(x)^\Sigma$ for $I$. A typical example of a principal $\Sigma$-ideal is the ideal of positive elements $E_{\geq 1}$
with $E=k\Sigma_*$ from Example~\ref{primery}(6). In this case $x=\id\in\Sigma_1$. 

More generally, note that if a $\Sigma$-ideal is finitely generated, there are $x_1,\ldots,x_n\in E$
such that $I$ is the $\Sigma$-closure of the non-symmetric finitely generated right ideal $x_1E+\cdots+x_nE$. In this case we
write $I=(x_1,\ldots,x_n)^\Sigma$.

It is useful to have the following concept.

\begin{defs}
We say that $E$ is {\it finitely $\Sigma$-generated by $x_1,\ldots,x_n\in E$} if every element of $E$ is a finite $k$-linear
combination of permuted monomials $\sigma(x_1^{a_1}x_2^{a_2}\cdots x_n^{a_n})$, where $\sigma$
is a permutaion in $\Sigma_{\deg(x_1)+a_1+\cdots\deg(x_n)+a_n}$.
It is worth noting that $k\Sigma_*$ is not finitely generated as a non-symmetric $k$-algebra but is finitely
$\Sigma$-generated by $x=\id\in \Sigma_{1}$.
Note that $E$ is finitely $\Sigma$-generated if and only if the $\Sigma$-ideal $E_{\geq 1}$ is finitely generated.
\end{defs}

\begin{lemma}\label{ideals}
Any $\Sigma$-ideal is a two-sided ideal after forgetting the symmetric structure on $E$. 
\end{lemma}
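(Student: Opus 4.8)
The plan is to show that for a $\Sigma$-ideal $I\subseteq E$, the underlying non-symmetric submodule $U(I)\subseteq U(E)$ is closed under left multiplication by arbitrary homogeneous elements of $E$, since it is already a right ideal by definition (being a subobject in $\Gr E$). So let $x\in I_n$ and $y\in E_m$; we must show $yx\in I_{m+n}$. The multiplication $\mu_{m,n}\colon E_m\otimes E_n\to E_{m+n}$ sends $y\otimes x$ to $yx$, while $\mu_{n,m}$ sends $x\otimes y$ to $xy\in I_{n+m}$ (the latter lies in $I$ because $I$ is a right ideal in $\Gr E$). The commutativity square in Definition~\ref{symmdef} relates these two products: $\mu_{m,n}\circ twist = \chi_{n,m}\circ\mu_{n,m}$ as maps $E_n\otimes E_m\to E_{m+n}$. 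Applying this to $x\otimes y$ gives $yx = \mu_{m,n}(y\otimes x) = \mu_{m,n}(twist(x\otimes y)) = \chi_{n,m}(\mu_{n,m}(x\otimes y)) = \chi_{n,m}(xy)$.

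So $yx = \chi_{n,m}(xy)$, i.e.\ $yx$ is obtained from $xy\in I_{n+m}$ by applying the shuffle permutation $\chi_{n,m}\in\Sigma_{n+m}$. But $I$ is a subobject of $E$ in $\Gr^\Sigma E$, hence each $I_{n+m}$ is a $k\Sigma_{n+m}$-submodule of $E_{n+m}$, so it is stable under the action of every permutation, in particular under $\chi_{n,m}$. Therefore $yx = \chi_{n,m}(xy)\in I_{m+n}$. This proves $U(I)$ is a left ideal as well, hence a two-sided ideal.

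The one point requiring a little care is the degree bookkeeping around the shuffle: one needs $E_{m+n}$ and $E_{n+m}$ identified correctly and the $\Sigma$-module structure on $I$ used in the right degree. This is exactly the situation already handled in the definition of the twist isomorphism and of commutativity, so there is no real obstacle; the proof is essentially an unwinding of the commutativity axiom together with the observation that a $\Sigma$-submodule is automatically permutation-stable in each degree. I would write it up in three sentences along the lines above, without spelling out the coordinate formula for $\chi_{n,m}$ again since it is recorded in Section~\ref{symmsection}.
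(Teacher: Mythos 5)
Your proof is correct and is essentially the paper's proof, just with the steps spelled out: both reduce to the identity $yx=\chi_{\deg x,\deg y}(xy)$ coming from the commutativity axiom and then invoke that $I$, being a $\Sigma$-subobject, is permutation-stable in each degree. No substantive difference.
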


\begin{proof}
By definition, any $\Sigma$-ideal $I$ of $E$ is a right (non-symmetric) ideal of $E$.
Given two homogeneous elements $x\in I$ and $y\in E$ of degrees $m$ and $n$, one has $xy\in I$
and $yx=\chi_{m,n}(xy)\in I$.
\end{proof}

Given two $\Sigma$-ideals $I,J$ of $E$, its product $IJ$ is defined as the image of the map
   $$I\wedge J\to E\wedge E\to E.$$
Clearly, $IJ$ is a $\Sigma$-ideal. It is the $\Sigma$-closure of the naive product of (non-symmetric)
ideals $I$ and $J$. 

By a {\it prime $\Sigma$-ideal\/} we mean a $\Sigma$-ideal $P$ of $E$ satisfying the property $x\in P$
or $y\in P$ whenever $xy\in P$ for $x,y\in E$. 

\begin{lemma}\label{prime}
The following conditions are equivalent for a $\Sigma$-ideal $P$:

\begin{enumerate}
\item $P$ is prime;
\item for any two homogeneous elements
$x,y\in E$ the condition $xy\in P$ implies that $x\in P$ or $y\in P$;
\item for any two $\Sigma$-ideals
$I,J\subset E$ the condition $IJ\in P$ implies that $I\subset P$ or $J\subset P$.
\end{enumerate}
\end{lemma}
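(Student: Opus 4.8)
The plan is to establish the cycle $(1)\Rightarrow(2)\Rightarrow(1)$ together with $(2)\Leftrightarrow(3)$. The implication $(1)\Rightarrow(2)$ is trivial, homogeneous elements being a special case. For $(2)\Rightarrow(1)$ I would run the classical minimal-counterexample argument for graded ideals, adapted to the present non-commutative setting. Assuming $(2)$ holds but $(1)$ fails, choose a pair $(f,g)$ with $f,g\notin P$, $fg\in P$, for which the total number of nonzero homogeneous components of $f$ and $g$ is as small as possible. If both were homogeneous, $(2)$ would give a contradiction, so at least one of them, say $f$, has a top homogeneous component $f_r$ of degree $r$ together with strictly lower-degree terms. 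Since $P$ is a graded submodule of $E$, every homogeneous component of $fg\in P$ lies in $P$; comparing top degrees gives $f_rg_s\in P$, where $g_s$ is the top component of $g$, so $(2)$ forces $f_r\in P$ or $g_s\in P$. In the first case $f-f_r\notin P$ while $(f-f_r)g=fg-f_rg\in P$ because $P$ is a right ideal, contradicting minimality; the second case is symmetric, using that $P$ is a two-sided ideal by Lemma~\ref{ideals}.

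For $(2)\Rightarrow(3)$, suppose $IJ\subseteq P$ and $I\not\subseteq P$. Since $I$ and $P$ are graded, some homogeneous $x\in I$ lies outside $P$; for any homogeneous $y\in J$ the element $xy$ lies in the naive product of $I$ and $J$, hence in $IJ\subseteq P$, so $(2)$ forces $y\in P$, and since $J$ is graded this yields $J\subseteq P$. For the converse $(3)\Rightarrow(2)$, given homogeneous $x,y$ with $xy\in P$, I would take the principal $\Sigma$-ideals $I=(x)^\Sigma$ and $J=(y)^\Sigma$ and verify $IJ\subseteq P$. A homogeneous element of $I$ has the form $\sum_i\tau_i(xc_i)$ and one of $J$ the form $\sum_j\rho_j(yd_j)$ with $c_i,d_j\in E$ homogeneous, so by equivariance of the multiplication their product is a sum of terms $(\tau_i\times\rho_j)(xc_i\cdot yd_j)$; transporting $c_i$ past $y$ via the graded-commutativity relation rewrites $xc_i\cdot yd_j$ as $(\id\times\sigma\times\id)(xy\,c_id_j)$ for a suitable shuffle $\sigma$, and this lies in $P$ since $xy\in P$ and $P$ is a two-sided $\Sigma$-submodule. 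Hence the naive product of $I$ and $J$, and therefore its $\Sigma$-closure $IJ$, is contained in $P$; $(3)$ then gives $I\subseteq P$ or $J\subseteq P$, and since $x=x\cdot 1\in(x)^\Sigma$ (and likewise for $y$) we conclude $x\in P$ or $y\in P$.

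The main obstacle is precisely this shuffle bookkeeping in $(3)\Rightarrow(2)$: because $E$ is commutative only after applying the appropriate shuffle permutations ($vu=\chi_{n,m}(uv)$ for homogeneous $u\in E_n$, $v\in E_m$), one must check carefully that moving $c_i$ and $d_j$ past $y$ acts as a block permutation, so that the resulting element is obtained from $xy\,c_id_j$ by an element of the symmetric group and membership in $P$ is therefore preserved; a milder version of the same point is used in $(2)\Rightarrow(1)$. Apart from this, the argument uses only that a $\Sigma$-ideal is, after forgetting the symmetric structure, a graded two-sided ideal (Lemma~\ref{ideals}) closed under the symmetric group actions, so that $\Sigma$-closures of subsets of $P$ remain inside $P$.
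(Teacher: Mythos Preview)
Your argument is correct and follows essentially the same route as the paper: the paper disposes of $(1)\Leftrightarrow(2)$ by citing the graded sublemma in \cite[Section~5]{GP1} (your minimal-counterexample argument is exactly that), and handles $(2)\Leftrightarrow(3)$ via principal $\Sigma$-ideals and the identity $(x)^\Sigma(y)^\Sigma=(xy)^\Sigma$, whose proof amounts precisely to the shuffle bookkeeping you spell out. The only difference is that you are more explicit where the paper either cites or asserts; in particular your verification that $xc_iyd_j=(\id\times\chi\times\id)(xy\,c_id_j)$ is exactly what underlies the paper's claim $(x)^\Sigma(y)^\Sigma=(xy)^\Sigma$ (and reappears later in the sublemma inside Lemma~\ref{compact}).
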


\begin{proof}
$(1)\Leftrightarrow (2)$. The proof literally repeats that of a sublemma in~\cite[Section~5]{GP1}.

$(3)\Rightarrow (2)$. This immediately follows if we consider principal $\Sigma$-ideals 
for homogeneous $x,y\in E$ and note that $(x)^\Sigma(y)^\Sigma=(xy)^\Sigma$.

$(2)\Rightarrow (3)$. Suppose $I,J$ are $\Sigma$-ideals of $E$ such that $IJ\subset P$
but $I\varsubsetneq P$ and $J\varsubsetneq P$. Then there are homogeneous $x\in I$ and $y\in J$
such that $x\notin P$ and $y\notin P$. By assumption, $xy$ is not in $P$. On the other hand,
$xy\in IJ\subset P$, a contradiction.
\end{proof}

\begin{defs}\label{projsigma}
The {\it symmetric projective scheme $\Proj^\Sigma E$\/} is 
a topological space whose points are the  prime $\Sigma$-ideals not containing $E_{\geq 1}$. 
Zariski's topology of $\Proj^\Sigma A$ is defined by taking the closed sets to be the
sets of the form $V(I)=\{P\in\Proj^\Sigma E\mid P\supseteq I\}$ for some
$\Sigma$-ideal $I$ of $E$. We set $D(I):=\Proj^\Sigma A\setminus V(I)$.
\end{defs}

We will endow this space with a sheaf of {\it commutative\/} rings $\cc O_{\Proj^\Sigma E}$ such that the resulting pair 
$(\Proj^\Sigma E,\cc O_{\Proj^\Sigma E})$ will be a ringed space. By the preceding lemma one has
   $$V(IJ)=V(I)\cup V(J),\quad V(\Sigma_{a\in A}I_a)=\cap_{a\in A}V(I_a).$$
Note that 
   $D(a_0):=\{P\in\Proj^\Sigma E\mid a_0\notin P\}=\bigcup_{b\in E_{\geq 1}}D(a_0b)$ for any $a\in E_0$. It follows that
$\Proj^\Sigma E$ has a basis of open sets $D(f)$ with $f\in E_n$, $n\geq 1$, which we call {\it standard opens}.
The intersection of two standard opens is another: $D(f)\cap D(g)=D(fg)$, for homogeneous elements $f,g\in E$ of positive degree.

\begin{exs}\label{primerproj}
(1) If $E$ is graded commutative, then $\Proj^\Sigma E=\Proj E$ by construction. 

(2) Given a free $k$-module $V$, we define the {\it symmetric projective space of $V$\/} by
   $$\Proj^\Sigma(V):=\Proj^\Sigma(T(V)),$$
where $T(V)$ is the tensor algebra for $V$. The classical projective space $\Proj(V)$ of $V$ is defined
as $\Proj(S(V))$, where $S(V)$ is the commutative graded $k$-algebra defined in each homogeneous degree $n$
by $V^{\otimes n}/(v_1\otimes\cdots\otimes v_n-v_{\sigma(1)}\otimes\cdots\otimes v_{\sigma(n)})$, $\sigma\in\Sigma_n$.
It follows that $\Proj(S(V))$ is homeomorphic to the closed subset $V(I)$ of $\Proj^\Sigma(V)$
endowed with the subspace topology, where $I$ is the $\Sigma$-ideal of $T(V)$ generated 
by the homogeneous elements $v_1\otimes\cdots\otimes v_n-v_{\sigma(1)}\otimes\cdots\otimes v_{\sigma(n)}$, $\sigma\in\Sigma_n$.
\end{exs}

\begin{lemma}\label{compact}
An open subset of $\Proj^\Sigma E$ is quasi-compact if and only if it is of the form 
$D(I)$ for some finitely generated $\Sigma$-ideal $I=(a_{1},\ldots,a_{n})^\Sigma$
with $a_1,\ldots,a_n\in E_{\geq1}$.
\end{lemma}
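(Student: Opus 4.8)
The plan is to mimic the classical characterization of quasi-compact opens in $\Proj$ of a graded ring, adapting each step to $\Sigma$-ideals. First I would prove the ``if'' direction. Suppose $I=(a_1,\dots,a_n)^\Sigma$ with $a_i\in E_{\geq 1}$. Then $D(I)=\bigcup_{i=1}^n D(a_i)$: indeed $P\in D(I)$ means $I\not\subseteq P$, and since $P$ is a $\Sigma$-ideal while $I$ is the $\Sigma$-closure of $a_1E+\cdots+a_nE$, we have $I\subseteq P$ iff every $a_i\in P$ (one direction is clear; conversely if all $a_i\in P$ then the whole $\Sigma$-closure lies in $P$ because $P$ is already $\Sigma$-closed and two-sided by Lemma~\ref{ideals}). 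Hence $D(I)=\bigcup_i D(a_i)$ is a finite union of standard opens, and it suffices to see each $D(f)$, $f\in E_{\geq 1}$, is quasi-compact. For this I would invoke the basis structure: any open cover of $D(f)$ refines to a cover by standard opens $D(g_\alpha)$, and $D(f)\subseteq\bigcup_\alpha D(g_\alpha)$ translates (via $V(\Sigma_\alpha (g_\alpha)^\Sigma)=\cap_\alpha V((g_\alpha)^\Sigma)$) into $V(\Sigma_\alpha(g_\alpha)^\Sigma)\subseteq V((f)^\Sigma)$, i.e. $f$ lies in the radical of the $\Sigma$-ideal $\Sigma_\alpha(g_\alpha)^\Sigma$ generated by all the $g_\alpha$ — here one needs the Nullstellensatz-type statement that $V(J_1)\subseteq V(J_2)$ forces $J_2\subseteq\sqrt{J_1}$, which follows because prime $\Sigma$-ideals detect radical membership (Lemma~\ref{prime} plus a standard Zorn's lemma argument producing a prime avoiding a given non-nilpotent element). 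Then some power $f^m$ already lies in a finite subsum $(g_{\alpha_1},\dots,g_{\alpha_r})^\Sigma$, whence $D(f)=D((f^m)^\Sigma)\subseteq D(g_{\alpha_1})\cup\cdots\cup D(g_{\alpha_r})$, giving the finite subcover.

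Next the ``only if'' direction: let $W\subseteq\Proj^\Sigma E$ be quasi-compact and open. Since the standard opens $D(f)$, $f\in E_{\geq 1}$, form a basis, write $W=\bigcup_{\alpha}D(f_\alpha)$; quasi-compactness yields $W=D(f_1)\cup\cdots\cup D(f_n)$ with $f_i\in E_{\geq 1}$. Setting $I=(f_1,\dots,f_n)^\Sigma$, the identity $D(I)=\bigcup_i D(f_i)$ established above gives $W=D(I)$ with $I$ finitely generated, as required.

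The main obstacle I anticipate is the Nullstellensatz step — establishing that $V(J_1)\subseteq V(J_2)$ implies $J_2\subseteq\sqrt{J_1}$ in the $\Sigma$-ideal setting, equivalently that $\sqrt{J}=\bigcap_{P\supseteq J,\,P\text{ prime }\Sigma\text{-ideal}}P$, together with the definition of the radical $\sqrt{J}$ of a $\Sigma$-ideal. One must check that the usual Zorn's lemma construction (take a $\Sigma$-ideal maximal among those containing $J$ and avoiding the powers of a fixed homogeneous $f$, then show it is prime) goes through: maximality of such $\Sigma$-ideals is fine since $\Sigma$-closures of unions of chains of $\Sigma$-ideals are again $\Sigma$-ideals, and primality is checked on homogeneous elements via Lemma~\ref{prime}(2), reducing to the familiar computation that if $x,y\notin M$ then $(M+(x)^\Sigma)$ and $(M+(y)^\Sigma)$ each meet $\{f^k\}$, so their product does too, forcing $xy\notin M$. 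A second, more bookkeeping-level point is verifying $D(I)=\bigcup_i D(a_i)$ and the behaviour of $D$ and $V$ under $\Sigma$-closures and finite sums; these reduce to Lemmas~\ref{ideals}, \ref{prime} and the displayed identities $V(IJ)=V(I)\cup V(J)$, $V(\Sigma_a I_a)=\cap_a V(I_a)$ already recorded after Definition~\ref{projsigma}, so they should be routine once the radical statement is in hand.
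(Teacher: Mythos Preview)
Your proposal is correct and rests on the same key ingredient as the paper: the Sublemma that a $\Sigma$-ideal maximal among those containing a given $\Sigma$-ideal and avoiding all powers of a fixed homogeneous element is prime. The organization differs. The paper argues the quasi-compactness of $D(a)$ by direct contradiction: if no finite subcover exists, then $a^t\notin I:=\sum_\lambda I_\lambda$ for all $t$, Zorn plus the Sublemma produce a prime $Q\supseteq I$ with $a\notin Q$, and $Q\in D(a)$ yields the contradiction immediately. You instead first pass through the Nullstellensatz $\surd J=\bigcap_{P\in V(J)}P$ (which is the paper's Lemma~\ref{peres}, proved there \emph{after} the present lemma using that same Sublemma) and then extract a finite subcover from $f^m\in J$. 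One step you gloss over deserves a word: with the paper's definition of $\surd J$ as the $\Sigma$-ideal \emph{generated} by homogeneous $b$ with $b^t\in J$, knowing $f\in\surd J$ does not immediately give $f^m\in J$; you need either the contrapositive form of the Zorn/Sublemma argument you sketch, or the explicit permuted-product expansion that appears in the proof of Lemma~\ref{irreducible}. The paper's direct contradiction avoids this detour, so it is a bit leaner, but both routes are valid and ultimately equivalent.
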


\begin{proof}
Let $U\subset \Proj^\Sigma E$ be open quasi-compact. Then 
$U=\bigcup_{s\in S}D(a_s)$ with each $a_s\in E_{\geq 1}$. There is a finite subset $T=\{s_1,\ldots,s_n\}\subset S$ such that
$U=\bigcup_{t\in T}D(a_t)$. Set $I=(a_{s_1},\ldots,a_{s_n})^\Sigma$. Then $I$ is a finitely generated
$\Sigma$-ideal and $U=D(I)$.

Conversely, if $I=(a_{1},\ldots,a_{n})^\Sigma$ is a finitely generated $\Sigma$-ideal with $a_1,\ldots,a_n\in E_{\geq1}$, then
$D(I)=D(a_1)\cup\cdots\cup D(a_n)$. So it is enough to verify that $D(a)$ is quasi-compact for any $a\in E_{\geq 1}$.
Consider a cover $D(a)=\bigcup_\Lambda D(I_\lambda)$ of $D(a)$ by open sets
$D(I_\lambda)$. Assume $D(a)\ne D(I_{\lambda_1})\cup\cdots\cup
D(I_{\lambda_n})$ for any $\lambda_1,\ldots,\lambda_n\in\Lambda$.
Set $I:=\sum_\Lambda I_\lambda$. Then $a\notin I$ because otherwise
$a\in I_{\lambda_1}+\cdots+I_{\lambda_n}$ for some
$\lambda_1,\ldots,\lambda_n\in\Lambda$ and then
$D(a)=D(I_{\lambda_1})\cup\cdots\cup D(I_{\lambda_n})$. It also
follows that $a^t\notin I$ for any $t$.

\begin{sublem}
Let $Q$ be a $\Sigma$-ideal with $a^t\notin Q$ for all $t$, $Q\supseteq I$,
and let $Q$ be maximal such. Then $Q$ is prime.
\end{sublem}

\begin{proof}
By the previous lemma it is enough to check that for any two
homogeneous elements $b,c\in E$ the condition $bc\in Q$ implies
$b\in Q$ or $c\in Q$. Let $b,c\in E$ be such that $bc\in Q$ and
$b,c\notin Q$. Then $a^t=q+\sum_{i=1}^n\sigma_i(br_i)\in Q+(b)^\Sigma$ and $a^s=q'+\sum_{j=1}^m\tau_j(cr'_j)\in Q+(c)^\Sigma$ for
some $s,t\in\bb N$, $q,q'\in Q$ and permutations $\sigma_i$-s, $\tau_j$-s. One has,
   $$\sigma_i(br_i)\tau_j(cr'_j)=(\sigma_i\times\tau_j)(br_icr'_j)=(\sigma_i\times\tau_j)(\id\times\chi_{\deg(r_i),\deg(c)}\times\id)(bcr_ir'_j)\in Q.$$
We see that
$a^{t+s}=q''+\sum_{i,j}\sigma_i(br_i)\tau_j(cr'_j)\in Q$, a contradiction.
\end{proof}

Choose $Q$ as in the sublemma.  Note that $Q\nsupseteq E_{\geq 1}$ so $Q\in
\Proj A$. Since $a\notin Q$, $Q\in D(a)$ and so $Q\in
D(I_{\lambda_0})$ for some $\lambda_0\in\Lambda$. It follows that
$Q\nsupseteq I_{\lambda_0}$, a contradiction. So $D(a)$ is
quasi-compact. 
\end{proof}

\begin{corollary}
The space $\Proj^\Sigma E$ is quasi-compact whenever the $\Sigma$-ideal $E_{\geq 1}$
is finitely generated.
\end{corollary}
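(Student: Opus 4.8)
The plan is to recognise the space $\Proj^\Sigma E$ as a single standard-type open set and then quote Lemma~\ref{compact}. Concretely, I would first observe that by Definition~\ref{projsigma} the closed set $V(E_{\geq 1})$ is $\{P\in\Proj^\Sigma E\mid P\supseteq E_{\geq 1}\}$, and this is empty because the points of $\Proj^\Sigma E$ are by definition exactly the prime $\Sigma$-ideals that do \emph{not} contain $E_{\geq 1}$. Hence $D(E_{\geq 1})=\Proj^\Sigma E\setminus V(E_{\geq 1})=\Proj^\Sigma E$.

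Next, assuming $E_{\geq 1}$ is finitely generated as a $\Sigma$-ideal, I would choose a finite generating set and replace its members by their homogeneous components, obtaining homogeneous generators $a_1,\dots,a_n$ with $E_{\geq 1}=(a_1,\dots,a_n)^\Sigma$. Since each $a_i$ lies in $E_{\geq 1}$, it has positive degree, so $a_1,\dots,a_n\in E_{\geq 1}$. Lemma~\ref{compact} then applies verbatim to $I=E_{\geq 1}$ and yields that $D(E_{\geq 1})$ is quasi-compact; combined with the previous paragraph this gives that $\Proj^\Sigma E$ is quasi-compact.

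There is essentially no obstacle here: the statement is a direct specialisation of Lemma~\ref{compact} to the ambient ideal $E_{\geq 1}$. The only point worth a sentence of care is that the generators of $E_{\geq 1}$ can be taken homogeneous of positive degree, which is immediate because the degree-$0$ homogeneous component of any element of $E_{\geq 1}$ vanishes; alternatively, one could bypass the lemma entirely and re-run the Zorn's-lemma maximality argument from its Sublemma, but invoking Lemma~\ref{compact} is cleaner.
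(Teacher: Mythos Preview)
Your proposal is correct and is exactly the intended argument: the paper states this corollary without proof because it is immediate from Lemma~\ref{compact}, and you have spelled out precisely that immediate deduction (identify $\Proj^\Sigma E$ with $D(E_{\geq 1})$ and apply the lemma). The remark about taking homogeneous generators of positive degree is accurate and is the only point requiring a word of justification.
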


Given a $\Sigma$-ideal $I$, let $\surd I$ denote
the $\Sigma$-ideal generated by the homogeneous elements $b$ such that
$b^t\in I$ for some $t$.
It is useful to have the following fact.

\begin{lemma}\label{peres}
Let $I$ be a $\Sigma$-ideal of $E$. Then $\surd I=\bigcap_{P\in V(I)}P$.
\end{lemma}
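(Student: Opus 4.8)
The statement is the projective, $\Sigma$-graded counterpart of the classical identity expressing the radical of an ideal as the intersection of the prime ideals containing it, so the plan is to imitate the usual two-inclusion argument, the only new ingredient being the Sublemma already proved inside Lemma~\ref{compact}. Since every $\Sigma$-ideal is a graded subobject of $E$, both $\surd I$ and $\bigcap_{P\in V(I)}P$ are generated by their homogeneous elements, and it suffices to compare the two sides on homogeneous elements; I will carry this out for homogeneous elements of positive degree, which is the substantive case.

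The inclusion $\surd I\subseteq\bigcap_{P\in V(I)}P$ is immediate: if $b\in E$ is homogeneous with $b^t\in I$ for some $t$ and $P\in V(I)$, then $b^t\in I\subseteq P$, hence $b\in P$ by Lemma~\ref{prime}. As the generators of $\surd I$ all lie in the $\Sigma$-ideal $\bigcap_{P\in V(I)}P$, the inclusion follows.

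For the reverse inclusion I argue contrapositively. Let $x\in E_{\geq1}$ be homogeneous with $x\notin\surd I$; then $x^t\notin I$ for every $t$, since otherwise $x$ would be one of the generators of $\surd I$. The family of $\Sigma$-ideals $Q$ with $I\subseteq Q$ and $x^t\notin Q$ for all $t$ is nonempty (it contains $I$) and closed under unions of chains, so by Zorn's lemma it has a maximal member $Q$; by the Sublemma in the proof of Lemma~\ref{compact} the ideal $Q$ is prime. Moreover $Q$ is relevant: if $E_{\geq1}\subseteq Q$ then $x\in Q$, contradicting $x\notin Q$; hence $Q\in\Proj^\Sigma E$, and since $I\subseteq Q$ we get $Q\in V(I)$. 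As $x\notin Q$, this shows $x\notin\bigcap_{P\in V(I)}P$, which completes the argument on positive-degree homogeneous elements and hence the proof.

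The step I expect to be the crux is the verification that the $\Sigma$-ideal $Q$ produced by Zorn is an actual point of $\Proj^\Sigma E$, i.e.\ that it does not contain the irrelevant ideal $E_{\geq1}$; this is exactly where positivity of $\deg x$ is used, and is the only place the projective (rather than affine) character of the statement enters — correspondingly one reads the displayed identity with the understanding that $V(I)\ne\emptyset$. The primality of $Q$ is not an obstacle, being already recorded as the Sublemma in Lemma~\ref{compact}, and the remaining manipulations with $\Sigma$-closures and products of $\Sigma$-ideals rely only on Lemmas~\ref{ideals} and~\ref{prime}.
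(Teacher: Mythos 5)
Your proof is correct and follows essentially the same route as the paper's: the easy inclusion, then the Sublemma from Lemma~\ref{compact} applied to a maximal $\Sigma$-ideal $Q\supseteq I$ avoiding all powers of $x$, which yields a prime in $V(I)$ missing $x$ and hence a contradiction. Your extra verification that $Q$ is relevant (does not contain $E_{\geq 1}$, using $\deg x\geq 1$) is a point the paper's proof passes over silently, so it is a welcome refinement rather than a deviation.
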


\begin{proof}
Clearly, $\surd I\subset\bigcap_{P\in V(I)}P$. Let $a\in E$ be a homogeneous element 
in $\bigcap_{P\in V(I)}P\setminus\surd I$. We have that $a^t\notin I$ for all $t$. The sublemma above implies
there is a prime $\Sigma$-ideal $Q$ containing $I$ such that $a\notin Q$. On the other hand, 
$Q\in V(I)$, and hence $a\in Q$ by assumption.
This contradiction completes the proof.
\end{proof}

\begin{lemma}\label{t0}
The space $\Proj^\Sigma E$ is $T_0$.
\end{lemma}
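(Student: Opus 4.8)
The plan is to use the standard criterion that a topological space is $T_0$ if and only if distinct points have distinct closures, and to identify the closure of a point $P\in\Proj^\Sigma E$ with $V(P)$. First I would observe that for any $\Sigma$-ideal $P$ one has $\overline{\{P\}}=V(P)$: the set $V(P)$ is closed and contains $P$, while every closed set containing $P$ is of the form $V(I)$ with $I\subseteq P$, hence contains $V(P)$. So $V(P)$ is the smallest closed set containing $P$.

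Next I would record the elementary fact that a prime $\Sigma$-ideal is radical, i.e. $\surd P=P$ when $P$ is prime. The inclusion $P\subseteq\surd P$ is immediate. Conversely, if $b$ is a homogeneous element with $b^{t}\in P$ for some $t$, then writing $b^{t}=b\cdot b^{t-1}$ and applying Lemma~\ref{prime}(2) to the homogeneous elements $b$ and $b^{t-1}$ shows that $b\in P$ or $b^{t-1}\in P$; by induction on $t$ this forces $b\in P$. Thus every homogeneous generator of $\surd P$ lies in $P$, so $\surd P\subseteq P$.

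Now suppose $P,P'\in\Proj^\Sigma E$ have the same closure, so that $V(P)=V(P')$. Applying Lemma~\ref{peres} to the $\Sigma$-ideals $P$ and $P'$ and using the previous paragraph, we obtain
   $$P=\surd P=\bigcap_{Q\in V(P)}Q=\bigcap_{Q\in V(P')}Q=\surd P'=P'.$$
Hence distinct points of $\Proj^\Sigma E$ have distinct closures, and $\Proj^\Sigma E$ is $T_0$.

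The argument is short and its only substantive inputs are Lemma~\ref{peres} and the observation that primes are radical; there is no real obstacle. The one point that merits a moment of care is checking that $V(P)$ is non-empty so that the displayed chain of equalities makes sense: this holds because $P$ itself belongs to $V(P)$, as $P\in\Proj^\Sigma E$ does not contain $E_{\geq 1}$.
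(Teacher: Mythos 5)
Your argument is correct, but it takes a heavier route than the paper's. The paper uses the minimal, direct observation: since $P$ and $Q$ are distinct graded $\Sigma$-ideals, there is a homogeneous element $a$ lying in one but not the other (say $a\in P\setminus Q$), and then $D(a)$ is an open set containing $Q$ but not $P$. That is the whole proof. You instead characterize $T_0$ via distinctness of closures, identify $\overline{\{P\}}=V(P)$, observe that primes are radical, and invoke Lemma~\ref{peres} to recover $P$ from $V(P)$ as $\bigcap_{Q\in V(P)}Q$. Each step you take is correct and the dependency order is fine (Lemma~\ref{peres} precedes Lemma~\ref{t0}), but you are importing the sublemma-based machinery behind Lemma~\ref{peres} to prove something that follows directly from the gradedness of $\Sigma$-ideals. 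The one thing your argument buys that the paper's does not make explicit is the useful identification $\overline{\{P\}}=V(P)$; otherwise the paper's separating-open-set argument is shorter and more self-contained, and is the one you should default to here.
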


\begin{proof}
Let $P,Q$ be two different prime $\Sigma$-ideals of $E$. Without loss
of generality we may assume that there is a homogeneous element
$a\in P$ such that $a\notin Q$. It follows that $P\notin D(a)$ but
$Q\in D(a)$. Therefore $\Proj^\Sigma E$ is $T_0$.
\end{proof}

\begin{lemma}\label{irreducible}
Every non-empty irreducible closed subset
$V$ of $\Proj^\Sigma E$ has a generic point.
\end{lemma}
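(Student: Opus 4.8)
The plan is to mimic the classical argument that an irreducible closed subset of $\Proj R$ has a generic point, adapting it to $\Sigma$-ideals. Let $V$ be a non-empty irreducible closed subset of $\Proj^\Sigma E$. By definition $V = V(I)$ for some $\Sigma$-ideal $I$, and replacing $I$ by $\surd I$ we may assume $I = \surd I = \bigcap_{P \in V(I)} P$ by Lemma~\ref{peres}. The candidate generic point is of course $I$ itself: I would first show that $I$ is a prime $\Sigma$-ideal, and then show that $\overline{\{I\}} = V(I) = V$, which is immediate once $I \in \Proj^\Sigma E$ since the closure of $\{I\}$ is the smallest closed set containing $I$, namely $V(I)$.

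The first step — primality of $I$ — is where irreducibility is used. Using the criterion of Lemma~\ref{prime}, it suffices to check that for homogeneous $x, y \in E$ with $xy \in I$ one has $x \in I$ or $y \in I$. Consider the $\Sigma$-closed sets $V(I + (x)^\Sigma)$ and $V(I + (y)^\Sigma)$; their union is $V((I+(x)^\Sigma)(I+(y)^\Sigma))$, and since $xy \in I$ the product $\Sigma$-ideal $(I+(x)^\Sigma)(I+(y)^\Sigma)$ lies in $\surd I = I$ (here I use that the product is the $\Sigma$-closure of the naive product and that $\Sigma$-ideals are two-sided after forgetting the symmetric structure, by Lemma~\ref{ideals}), so this union contains $V(I) = V$. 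By irreducibility, $V$ is contained in one of them, say $V \subseteq V(I + (x)^\Sigma)$; but then $I + (x)^\Sigma \subseteq \bigcap_{P \in V(I)} P = I$, forcing $x \in I$. The second step — that $I$ is not contained in $E_{\geq 1}$, so that $I$ is a genuine point of $\Proj^\Sigma E$ — follows because $V$ is non-empty: any $P \in V$ satisfies $I \subseteq P$ and $P \nsupseteq E_{\geq 1}$, hence $I \nsupseteq E_{\geq 1}$.

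The main obstacle, such as it is, lies in the primality step: one must be careful that the product of $\Sigma$-ideals behaves correctly under $\surd{}$ and that "$(I+(x)^\Sigma)(I+(y)^\Sigma) \subseteq I$" really holds at the level of $\Sigma$-ideals and not merely naive ideals. This is handled by the observation recorded before Lemma~\ref{prime} that $(x)^\Sigma(y)^\Sigma = (xy)^\Sigma$ together with the fact that $\surd I$ is again a $\Sigma$-ideal containing all the relevant products of homogeneous elements; the $\Sigma$-closure operation is monotone and compatible with the module structure, so expanding a general element of the product ideal as a finite sum of terms $\sigma_i(u_i v_i)$ with $u_i \in I+(x)^\Sigma$, $v_i \in I+(y)^\Sigma$ and checking each term lies in $I$ reduces to the computation $\sigma(ab)\tau(cd) = (\sigma\times\tau)(\mathrm{id}\times\chi\times\mathrm{id})(acbd)$ already carried out in the sublemma of Lemma~\ref{compact}. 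Everything else is formal point-set topology identical to the commutative case.
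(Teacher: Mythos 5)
Your proposal is correct, and it takes a modestly different route from the paper's own argument. The paper proceeds by contradiction: after reducing to the case $I = \surd I$ (by re-doing the $\surd$-computation rather than invoking Lemma~\ref{peres}), it assumes $I$ is not prime, chooses homogeneous $a,b \notin I$ with $ab \in I$, uses irreducibility to conclude $V = V((a)^\Sigma + I)$, and then applies the sublemma from Lemma~\ref{compact} to manufacture a prime $Q \supseteq I$ with $a \notin Q$ — a contradiction since $Q$ must lie in $V = V((a)^\Sigma + I)$. You instead prove primality \emph{positively}: starting from $I = \surd I = \bigcap_{P \in V(I)} P$ (citing Lemma~\ref{peres}, which already packages the sublemma's work), the equality $V = V(I+(x)^\Sigma)$ coming from irreducibility immediately forces $I + (x)^\Sigma \subseteq \bigcap_{P \in V(I)} P = I$, hence $x \in I$. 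Both arguments rest on the same two computations — that the product of $\Sigma$-ideals behaves as expected (using the permutation identity $\sigma(ar)\tau(bs) = (\sigma\times\tau)(\id\times\chi\times\id)(abrs)$ from the sublemma) and the primality criterion of Lemma~\ref{prime} — so nothing new is needed; your version is somewhat cleaner in that it avoids reconstructing a witness prime $Q$ in the final step and simply reads off $x \in I$ from the intersection formula. You also explicitly verify that $I \nsupseteq E_{\geq 1}$ (so that $I$ really is a point of $\Proj^\Sigma E$), a small check the paper leaves implicit; that is a genuine and welcome addition. One tiny stylistic point: your expansion of $(I+(x)^\Sigma)(I+(y)^\Sigma)$ actually lands directly in $I$ (not merely $\surd I$), since each of the four types of cross-terms is already in $I$ using two-sidedness (Lemma~\ref{ideals}), $\Sigma$-stability of $I$, and $xy \in I$; writing "lies in $\surd I = I$" is harmless given your normalization but slightly obscures that the containment is exact.
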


\begin{proof}
There exists a graded ideal $I$ such that $V=V(I)$. Without loss of
generality we may assume that $I=\surd I$. Indeed, if $P\in V(I)\setminus V(\surd I)$, there is
a homogeneous $x\in\surd I$ such that $x\notin P$. Then,
   $$x=\sigma_1(b_1r_1)+\cdots+\sigma_n(b_nr_n),\quad r_1,\ldots,r_n\in E,$$
such that $b_1^{t_1},\ldots, b_n^{t_n}\in I$ for some $t_1,\ldots,t_n\geq 0$ and some permutations
$\sigma_1,\ldots,\sigma_n$. Using the proof of the sublemma above,
there is $N\gg \max\{nt_1,\ldots,nt_n\}$ and permutations $\tau_1,\ldots,\tau_q$ such that
      $$x^N=\tau_1(b_1^Ns_1)+\cdots+\tau_q(b_n^Ns_q)\in I,\quad s_1,\ldots,s_q\in E.$$
 As $P\supseteq I$ we see that $x^N\in P$, a contradiction. We conclude that $V(I)=V(\surd I)$.

If $I$ is prime then it is a generic point of $V$.
If $I$ is not prime there are homogeneous $a,b\in E\setminus
I$ such that $ab\in I$ (see Lemma~\ref{prime}). Then
$V=V((a)^\Sigma+I)\cup V((b)^\Sigma+I)$, hence $V=V((a)^\Sigma+I)$ for $V$ is irreducible.
Therefore every prime $\Sigma$-ideal $P$ containing $I$ must contain
$a$. Let $Q$ be a graded ideal with $a^t\notin Q$ for any $t$, $Q\supseteq I$,
and let $Q$ be maximal such. The sublemma above implies
$Q$ is prime. It follows that $Q\in V\setminus
V((a)^\Sigma+I)=\emptyset$, a contradiction.
\end{proof}

Recall from~\cite{Hoc} that a topological space is {\it
spectral\/} if it is $T_0$ and quasi-compact, the quasi-compact open
subsets are closed under finite intersections and form an open
basis, and every non-empty irreducible closed subset has a generic point.

We document the above statements as follows. 

\begin{theorem}\label{d3}
Suppose $E$ is finitely $\Sigma$-generated. Then the space $\Proj^\Sigma E$ is spectral.
\end{theorem}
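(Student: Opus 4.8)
The plan is to verify directly the four axioms in Hochster's characterisation of a spectral space recalled just above the statement, each of which has essentially been prepared in the preceding lemmas, so that the proof is a consolidation rather than a new argument.

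First I would dispose of the two most immediate axioms. The space $\Proj^\Sigma E$ is $T_0$ by Lemma~\ref{t0}. For quasi-compactness, note that $\Proj^\Sigma E=D(E_{\geq 1})$ by the very definition of $\Proj^\Sigma E$, whose points are precisely the prime $\Sigma$-ideals not containing $E_{\geq 1}$; since $E$ is finitely $\Sigma$-generated, the $\Sigma$-ideal $E_{\geq 1}$ is finitely generated, say $E_{\geq 1}=(a_1,\dots,a_n)^\Sigma$ with $a_i\in E_{\geq 1}$, and so $\Proj^\Sigma E=D(E_{\geq 1})$ is quasi-compact by Lemma~\ref{compact}.

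Next I would treat the basis axiom together with the intersection axiom. The standard opens $D(f)$ with $f\in E_m$, $m\geq 1$, form an open basis of $\Proj^\Sigma E$, and each $D(f)$ is quasi-compact: this is exactly what the second half of the proof of Lemma~\ref{compact} establishes for $D(a)$, $a\in E_{\geq 1}$. Hence the quasi-compact opens form an open basis. For closure under finite intersections, recall from Lemma~\ref{compact} that a quasi-compact open is exactly one of the form $D(I)$ with $I=(a_1,\dots,a_n)^\Sigma$ finitely generated and $a_i\in E_{\geq 1}$. Given two such $\Sigma$-ideals $I$ and $J=(b_1,\dots,b_m)^\Sigma$, one has $D(I)\cap D(J)=D(IJ)$, since $V(IJ)=V(I)\cup V(J)$; and $IJ$, being the $\Sigma$-closure of the non-symmetric product ideal, is generated by the finitely many elements $a_ib_j$, all lying in $E_{\geq 1}$. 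Thus $IJ$ is again a finitely generated $\Sigma$-ideal with generators in $E_{\geq 1}$, so $D(I)\cap D(J)$ is quasi-compact. Finally, every non-empty irreducible closed subset of $\Proj^\Sigma E$ has a generic point by Lemma~\ref{irreducible}. Assembling these four points shows $\Proj^\Sigma E$ is spectral.

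I do not expect a genuine obstacle here: the theorem is a bookkeeping assembly of Lemmas~\ref{compact}, \ref{t0} and~\ref{irreducible}. The only point that needs a line of argument beyond direct citation is the closure of the quasi-compact opens under finite intersection, and even that reduces to the remark that a finite product of finitely generated $\Sigma$-ideals is again finitely generated, which is immediate from the description of $IJ$ as the $\Sigma$-closure of the naive product of the underlying right ideals.
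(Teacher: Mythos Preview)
Your proposal is correct and matches the paper's approach: the paper does not give a separate proof for Theorem~\ref{d3} at all, but simply prefaces it with ``We document the above statements as follows,'' treating it as a summary of Lemmas~\ref{compact}, \ref{t0}, \ref{irreducible} and the Corollary on quasi-compactness. Your write-up is a faithful unpacking of that assembly; the one point you spell out that the paper leaves implicit---closure of quasi-compact opens under finite intersection via $D(I)\cap D(J)=D(IJ)$ with $IJ$ finitely generated---is justified by the computation in the Sublemma to Lemma~\ref{compact} (and by the identity $(x)^\Sigma(y)^\Sigma=(xy)^\Sigma$ noted in the proof of Lemma~\ref{prime}).
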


Next, we want define a structure sheaf of {\it commutative\/} rings $\cc O_{\Proj^\Sigma E}$.
Denote by $\cc B$ the basis of quasi-compact open subsets of $\Proj^\Sigma E$. If $D\in\cc B$ we set
   $$\cc F_{ D}:=\{I\subset E\mid V(I)\subset \Proj^\Sigma E\setminus D\}.$$
By construction, $\cc F_{ D}\subset \cc F_{ D'}$ whenever $D'\subset D$.

For every $n\geq 0$ and $I\in\cc F_D$ consider a morphism in $\Gr^\Sigma E$
   \begin{equation*}
    d_{n,I}:I[n]\wedge_E F_nE\to E,
   \end{equation*}
which is adjoint to the inclusion $\id:I[n]\to[F_nE,E]=E[n]$. 
We say that an injective symmetric module $Q\in\Gr^\Sigma E$ is {\it $\cc F_D$-stably fibrant\/} if 
$[d_{n,I},Q]:Q=[E,Q]\to[I[n]\wedge_E F_nE,Q]$ is an isomorphism for every $n\geq 0$ and $I\in\cc F_D$. Using Corollary~\ref{inj}
and the fact that $[F_mE,-]$ respects isomorphisms,
the $m$-th shift $Q[m]$ of $Q$ is $\cc F_D$-stably fibrant. Note that $Q$ is stably fibrant as well, because $E\in\cc F_D$.

Let $\cc S_D$ be the localizing subcategory of $\Gr^\Sigma E$ cogenerated by the $\cc F_D$-stably fibrant objects.
Note that $\Tors^\Sigma E\subset\cc S_D$. Similarly to the proof of Lemma~\ref{tensor} $\cc S_D$ is
a tensor subcategory. Let $q_D:\Gr^\Sigma E\to\Gr^\Sigma E/\cc S_D$ be the quotient functor. 
The tensor product in $\Gr^\Sigma E$ induces a tensor product in $\Gr^\Sigma E/\cc S_D$,
denoted by $\boxtimes_D$. More precisely, one sets
   $$X\boxtimes_D Y:=q_D(X\wedge_E Y)$$
for any $X,Y\in\Gr^\Sigma E/\cc S_D$. Similarly to Lemma~\ref{d2}
the quadruple $(\Gr^\Sigma E/\cc S_D,\boxtimes_D,[-,-],\cc O^D)$, $\cc O^D:=q_D(E)$,
defines a closed symmetric mo\-noi\-dal category with monoidal
unit $\cc O^D$ and internal Hom-object the same 
with that in $\Gr^\Sigma E$.

As $\cc S_D\supset\Tors^\Sigma E$, the category $\Gr^\Sigma E/\cc S_D$ equals the quotient
category $\QGr^\Sigma E/\cc T_D$, where 
   $$\cc T_D=\{X\in\QGr^\Sigma E\mid[X,Q]=0\textrm{\ for all $\cc F_D$-stably fibrant $Q$}\}.$$
We define a structure sheaf on $\cc O_{\Proj^\Sigma E}$ as follows. We
obtain a presheaf of rings on the basis $\cc B$ by
   $$D\mapsto\End_{\QGr^\Sigma E/\cc T_D}(\cc O^D).$$ 
If $D'\subseteq D$ are open subsets in $\cc B$, then $\cc T_D\subset\cc T_{D'}$ and the restriction map
   $$\End_{\QGr^\Sigma E/\cc T_D}(\cc O^D)\to\End_{\QGr^\Sigma E/\cc T_{D'}}(\cc O^{D'})$$
is induced by the quotient functor $\QGr^\Sigma E/\cc T_D\to\QGr^\Sigma E/\cc T_{D'}$. 
This is a presheaf of commutative rings on $\cc B$ by~\cite[Proposition~XI.2.4]{K}. 
It is extended to a presheaf on $\Proj^\Sigma E$
as follows. For any open $U$ in $\Proj^\Sigma E$ let
   $$U\longmapsto\lo{}_{D\subset U,\, D\in\cc B}\End_{\QGr^\Sigma E/\cc T_D}(\cc O^D).$$
Its sheafification is called the {\it structure sheaf\/} of
$\Proj^\Sigma E$ and is denoted by $\cc O_{\Proj^\Sigma E}$. By construction,
this is a sheaf of commutative rings on $\Proj^\Sigma E$.

\begin{defs}\label{urok}
The {\it symmetric projective scheme of $E$\/} is the ringed space $(\Proj^\Sigma E,\cc O_{\Proj^\Sigma E})$.
\end{defs}

The following theorem recovers classical projective schemes out of its symmetric ones.

\begin{theorem}\label{ramki}
Suppose $R$ is a commutative finitely generated graded ring with $R_0$
being a $\bb Q$-algebra. Then the projective scheme $(\Proj R,\cc O_{\Proj R})$
can be identified with $(\Proj^\Sigma R,\cc O_{\Proj^\Sigma R})$. Here $R$ is regarded
as a commutative graded symmetric $R_0$-algebra with trivial action of the symmetric groups.
\end{theorem}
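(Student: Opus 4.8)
The plan is to treat the underlying spaces and the structure sheaves separately.

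\emph{Spaces.} As the symmetric groups act trivially on $R$, a subobject of $R$ in $\Gr^\Sigma R$ is just a homogeneous ideal, the product of two $\Sigma$-ideals is the ordinary product, by Lemma~\ref{prime}(2) the prime $\Sigma$-ideals are exactly the homogeneous prime ideals, and $R_{\geq1}=R_+$. Hence the points, the closed sets $V(I)$, and the standard opens $D(f)$ of $\Proj^\Sigma R$ agree with those of $\Proj R$, so $\Proj^\Sigma R=\Proj R$ as topological spaces; this is recorded in Example~\ref{primerproj}(1). Since $R_+$ is finitely generated, so is the $\Sigma$-ideal $R_{\geq1}$, and $\Proj^\Sigma R$ is spectral by Theorem~\ref{d3}, as is $\Proj R$.

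\emph{Reduction and transport.} The sheaf $\cc O_{\Proj^\Sigma R}$ is the sheafification of the presheaf $D\mapsto\End_{\QGr^\Sigma R/\cc T_D}(\cc O^D)$ on the basis $\cc B$ of quasi-compact opens, and $\cc O_{\Proj R}$ is the sheafification of $D\mapsto\cc O_{\Proj R}(D)$; so it is enough to produce, for each $D\in\cc B$, a ring isomorphism $\End_{\QGr^\Sigma R/\cc T_D}(\cc O^D)\cong\cc O_{\Proj R}(D)$ natural in $D$, the restriction maps on both sides being induced by the relevant quotient (resp.\ localization) functors; by Lemma~\ref{compact} and the sheaf axiom on the classical side one may even restrict to the standard opens $D=D_+(f)$, $f\in R_n$, $n\geq1$. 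By Theorem~\ref{reconstr} (its hypotheses hold as $R_0$ is a $\bb Q$-algebra and $R$ is finitely generated), $U:\QGr^\Sigma R\to\QGr R$ is an equivalence with quasi-inverse $F$; moreover $F(q(R))=q^\Sigma(V(R))$ (using Lemma~\ref{tensorlem}) $=q^\Sigma(R)=\cc O^\Sigma$ since $V$ sends $R=R(0)$ to $F_0R=R$ with trivial action, so writing $\cc O:=q(R)\in\QGr R$ we get $U(\cc O^\Sigma)\cong\cc O$. Transporting $\cc T_D$ along the equivalence $U$ to a localizing subcategory $\cc T_D':=U(\cc T_D)\subseteq\QGr R$ yields $\QGr^\Sigma R/\cc T_D\simeq\QGr R/\cc T_D'$ carrying $\cc O^D$ to the image $\bar q_D(\cc O)$ of $\cc O$, whence $\End_{\QGr^\Sigma R/\cc T_D}(\cc O^D)\cong\End_{\QGr R/\cc T_D'}(\bar q_D(\cc O))$, naturally in $D$.

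\emph{Identifying $\cc T_D'$.} An injective $Q\in\Gr^\Sigma R$ is $\cc F_D$-stably fibrant iff $[d_{n,I},Q]$ is an isomorphism for all $n\geq0$ and all homogeneous $I$ with $V(I)\cap D=\emptyset$. Applying $U$ and invoking the dictionary from the proof of Lemma~\ref{tensor} — $U(I[n]\wedge_RF_nR)$ is a tail of $I$ for $n$ large, $U[-,Q]$ computes $\uhom_R(U(-),U(Q))$ on these objects, and by Maschke's theorem (as in the proof of Theorem~\ref{reconstr}) symmetric modules may be replaced by their underlying graded $R$-modules — shows that the $U(Q)$ are exactly the $\Tors R$-closed modules $M$ with $\uhom_R(R/I,M)=\underline{\Ext}_R^1(R/I,M)=0$ for all homogeneous $I$ with $V(I)\cap D=\emptyset$; a finite-type reduction as in Corollaries~\ref{qqqcor} and~\ref{vesmacor} replaces this family by a single finitely generated ideal. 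Therefore $\cc T_D'$ is the classical Serre subcategory of $\QGr R$ of quasi-coherent sheaves supported on $\Proj R\setminus D$; for $D=D_+(f)$ it consists of the $f$-power torsion objects, and a direct computation gives $\End_{\QGr R/\cc T_D'}(\bar q_D(\cc O))\cong R_{(f)}=\cc O_{\Proj R}(D_+(f))$ (the commutative prototype of the construction of $\cc O_{\Proj^\Sigma E}$; cf.~\cite{AZ}). Combining this with the previous step and with the identification of spaces from the first step, then sheafifying, identifies the ringed spaces $(\Proj^\Sigma R,\cc O_{\Proj^\Sigma R})$ and $(\Proj R,\cc O_{\Proj R})$.

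\emph{Main obstacle.} The crux is the third step: pushing the characterization of $\cc F_D$-stably fibrant symmetric modules through the forgetful functor and matching $\cc T_D'$ with the classical ``supported off $D$'' subcategory of $\QGr R$. This fuses the Maschke-theoretic comparison underlying Theorem~\ref{reconstr} with the finite-type bookkeeping for the filters $\cc F_D$ (Lemma~\ref{compact}, Corollaries~\ref{qqqcor},~\ref{vesmacor}), together with the elementary identification of $\End$ of the unit of the localization with the homogeneous localization $R_{(f)}$ — a point that needs slight care when $\deg f>1$.
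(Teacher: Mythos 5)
Your proof is correct and follows the same strategy as the paper's: the underlying spaces coincide because a $\Sigma$-ideal of $R$ with trivial action is just a homogeneous ideal, and the structure sheaves are identified by transporting the torsion-theoretic construction along the equivalence of Theorem~\ref{reconstr}. The paper disposes of the structure-sheaf comparison in one sentence by asserting that the Section~\ref{sectionproj} recipe applied to $\QGr R$ \emph{is} the classical $\cc O_{\Proj R}$ (tacitly leaning on the Serre-subcategory reconstruction of~\cite{GP1}, cited already in Lemma~\ref{d2}), whereas you unpack that assertion — transporting $\cc T_D$ to $\cc T_D'\subset\QGr R$, characterising the $\cc F_D$-stably fibrant modules after applying $U$, and computing $\End_{\QGr R/\cc T_D'}(\bar q_D(\cc O))\cong R_{(f)}$ at standard opens — which is a more self-contained (if longer) version of the same argument.
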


\begin{proof}
As $R$ is commutative, the spaces $\Proj R$ and $\Proj^\Sigma R$ coincide. The structure sheaf
$\cc O_{\Proj R}$ is defined as above if we replace $\QGr^\Sigma R$ by $\QGr R$. But these two categories are
naturally equivalent by Theorem~\ref{reconstr}, and hence $\cc O_{\Proj R}$ can be identified
$\cc O_{\Proj^\Sigma R}$.
\end{proof}

\appendix\section{Basic facts on Grothendieck categories}\label{poss}

In this section we recall basic facts about Grothendieck categories and Serre's localization
for the convenience of the reader. 

\begin{defs} {\rm
Recall that an object $A$ of a Grothendieck category 
$\cc C$ is {\em finitely generated\/} if
whenever there are subobjects $A_i\subseteq A$ for $i\in I$
satisfying $A=\sum_{i\in I}A_i$, then there is a finite subset
$J\subset I$ such that $A=\sum_{i\in J}A_i$. The category of
finitely generated subobjects of $\cc C$ is denoted by $\fg\cc C$.
The category is {\em locally finitely generated\/} provided that
every object $X\in\cc C$ is a directed sum $X=\sum_{i\in I}X_i$ of
finitely generated subobjects $X_i$, or equivalently, $\cc C$
possesses a family of finitely generated generators.}
\end{defs}

\begin{defs} {\rm
A finitely generated object $B\in\cc C$ is {\em finitely
presented\/} provided that every epimorphism $\eta:A\to B$ with $A$
finitely generated has a finitely generated kernel $\kr\eta$. The
subcategory of finitely presented objects of $\cc C$ is denoted by
$\fp\cc C$. Note that the
subcategory $\fp\cc C$ of $\cc C$ is closed under extensions.
Moreover, if
   $$0\to A\to B\to C\to 0$$
is a short exact sequence in $\cc C$ with $B$ finitely presented,
then $C$ is finitely presented \ifff $A$ is finitely generated. The
category is {\em locally finitely presented\/} provided that it is
locally finitely generated and every object $X\in\cc C$ is a direct
limit $X=\lp_{i\in I}X_i$ of finitely presented objects $X_i$, or
equivalently, $\cc C$ possesses a family of finitely presented
generators.
}
\end{defs}

A subcategory $\cc S$ of a Grothendieck category $\cc C$ is said to
be {\em Serre\/} if for any short exact sequence
   $$0\to{X'}\to X\to {X''}\to 0$$
$X',X''\in\cc S$ \ifff $X\in\cc S$. A Serre subcategory $\cc S$ of
$\cc C$ is said to be a {\em torsion class\/} or {\it localizing\/} if $\cc S$ is closed
under taking coproducts. An object $C$ of $\cc C$ is said to be {\it
torsionfree\/} if $(\cc S, C)=0$.  The pair consisting of a torsion
class and the corresponding class of torsionfree objects is referred
to as a {\em torsion theory}.   Given a torsion class $\cc S$ in
$\cc C$ the {\it quotient category $\cc C/\cc S$\/} is the full
subcategory with objects those torsionfree objects $C\in\cc C$
satisfying ${\rm Ext}^1(T,C)=0$ for every $T\in \cc S$.  The
inclusion functor $i:\cc S\to\cc C$ admits the right adjoint $t:\cc
C\to\cc S$ which takes every object $X\in\cc C$ to the maximal
subobject $t(X)$ of $X$ belonging to $\cc S$. The functor $t$ we
call the {\it torsion functor}.  Moreover, the inclusion functor
$i:\cc C/\cc S\to\cc C$ has a left adjoint, the {\it localization
functor\/} ${(-)}_{\cc S}:\cc C\to\cc C/\cc S$, which is also exact.
Then,
   $$\Hom_{\cc C}(X,Y)\cong\Hom_{\cc C/\cc S}(X_{\cc S},Y)$$
for all $X\in\cc C$ and $Y\in\cc C/\cc S$. A torsion class $\cc S$
is {\it of finite type\/} if the functor $i:\cc C/\cc S\to\cc C$
preserves directed sums. 

Let $\cc C$ be a Grothendieck category having a family of finitely
generated projective generators $\cc A=\{P_i\}_{i\in I}$. Let $\ff
F=\bigcup_{i\in I}\ff F^i$ be a family of subobjects, where each
$\ff F^i$ is a family of subobjects of $P_i$. We refer to $\ff F$ as
a {\em Gabriel filter\/} if the following axioms are satisfied:

\begin{itemize}
\item[$T1.$] $P_i\in\ff F^i$ for every $i\in I$;
\item[$T2.$] if $\ff a\in\ff F^i$ and $\mu:{P_j}\to{P_i}$
    then $\{\ff a:\mu\}=\mu^{-1}(\ff a)\in\ff F^j$;
\item[$T3.$] if $\ff a$ and $\ff b$ are subobjects of $P_i$ such that
    $\ff a\in\ff F^i$ and $\{\ff b:\mu\}\in\ff F^j$
    for any $\mu:{P_j}\to{P_i}$ with $\im\mu\subset\ff a$
    then $\ff b\in\ff F^i$.
\end{itemize}

\noindent In particular each $\ff F^i$ is a filter of subobjects of
$P_i$. A Gabriel filter is {\it of finite type} if each of these
filters has a  cofinal set of finitely generated objects (that is,
if for each $i$ and each $\ff a \in \ff F_i$ there is a finitely
generated $\ff b \in \ff F_i$ with $\ff a \supseteq \ff b$).

If $\cc A=\{A\}$ is a ring and $\ff a$ is a right ideal of $A$, then
for every endomorphism $\mu:A\to A$
   $$\mu^{-1}(\ff a)=\{\ff a:\mu(1)\}=\{a\in A\mid \mu(1)a\in\ff a\}.$$
On the other hand, if $x\in A$, then $\{\ff a:x\}=\mu^{-1}(\ff a)$,
where $\mu\in\End A$ is such that $\mu(1)=x$.

It is well-known (see, e.g., \cite{G}) that the map
   $$\cc S\longmapsto\ff F(\cc S)=\{\ff a\subseteq P_i\mid i\in I,\, P_i/\ff a\in\cc S\}$$
establishes a bijection between the Gabriel filters (respectively
Gabriel filters of finite type) and the torsion classes on $\cc C$
(respectively torsion classes of finite type on $\cc C$).

\end{document}